\documentclass[a4paper,11pt]{amsart}

\usepackage[english]{babel}
\usepackage[a4paper,margin=3cm]{geometry}

\usepackage{amsmath,amsthm,amssymb,mathtools, mathrsfs,booktabs,calc}
\usepackage[shortlabels]{enumitem}
\usepackage{comment}
\usepackage{xcolor}
\usepackage{graphicx}
\usepackage{ytableau}
\usepackage{csquotes}
\usepackage{tikz}
\usetikzlibrary{decorations.pathreplacing}

\usepackage[colorlinks=true, allcolors=blue]{hyperref}

\theoremstyle{definition}
\newtheorem{example}{Example}
\newtheorem{definition}{Definition}
\newtheorem{remark}{Remark}
\newtheorem{problem}{Problem}

\theoremstyle{plain}
\newtheorem{theorem}{Theorem}
\newtheorem{lemma}{Lemma}
\newtheorem{corollary}{Corollary}

\newtheorem{proposition}{Proposition}

\newcommand{\interl}{\ll}

\newcommand{\faultfree}{\mathcal{B}}
\newcommand{\anchor}{\mathcal{A}}
\newcommand{\fl}{\mathrm{faultlines}}

\newcommand{\oeis}[1]{\href{https://oeis.org/#1}{#1}}
\definecolor{lightblue}{RGB}{173, 216, 230}
\definecolor{skyblue}{RGB}{135, 206, 255}
\definecolor{pastelblue}{RGB}{176, 224, 230}
\definecolor{iceblue}{RGB}{200, 230, 250}

\definecolor{mintgreen}{RGB}{144, 238, 144}     
\definecolor{pastelgreen}{RGB}{152, 251, 152}   
\definecolor{softgreen}{RGB}{193, 255, 193} 
\definecolor{lemonyellow}{RGB}{255, 250, 205}
\definecolor{peachpink}{RGB}{255, 218, 185}
\definecolor{lavender}{RGB}{230, 230, 250}
\definecolor{aquablue}{RGB}{175, 238, 238}

\newcommand{\defin}[1]{%
\relax\ifmmode%
\textcolor{blue}{#1}%
\else \textcolor{blue}{\emph{#1}}%
\fi%
}

\newcommand{\gap}{\text{\makebox[\widthof{\texttt{0}}]{\texttt{-}}}}

\usepackage{xparse}
\ExplSyntaxOn
\tl_new:N \l_lattice_path_tl
\NewDocumentCommand{\latticePath}{ O{} m m }
 {
  \tl_clear:N \l_lattice_path_tl
  \str_map_inline:nn { #3 }
   {
    \str_case:nnF { ##1 }
     {
       {n}{ \tl_put_right:Nn \l_lattice_path_tl { --~++(0,1)   } }
       {s}{ \tl_put_right:Nn \l_lattice_path_tl { --~++(0,-1)  } }
       {e}{ \tl_put_right:Nn \l_lattice_path_tl { --~++(1,0)   } }
       {w}{ \tl_put_right:Nn \l_lattice_path_tl { --~++(-1,0)  } }
     }
     {}
   }
   \draw[line~width=2.5pt, line~cap=round, line~join=round, #1] (#2) \l_lattice_path_tl ;
 }
\ExplSyntaxOff
\newcommand{\bt}{\mathrm{bigtiles}}

\newcommand{\btw}{\mathrm{width}}

\newcommand{\anchorc}{\mathcal{C}}

\newcommand{\gF}{\mathbf{F}}
\newcommand{\gG}{\mathbf{G}}
\newcommand{\gB}{B}
\newcommand{\gC}{\mathbf{C}}
\newcommand{\gD}{\mathbf{D}}
\newcommand{\gH}{\mathbf{H}}

\newcommand{\drawLoutline}[2]{%
  \draw[thick,fill=skyblue,line width= 1pt]
    (#1,#2) -- ++(2,0) -- ++(0,1) -- ++(-1,0) -- ++(0,1) -- ++(-1,0) -- cycle;
}

\newcommand{\drawbigLoutline}[2]{%
  \draw[thick,fill=skyblue,line width= 1pt]
    (#1,#2) -- ++(3,0) -- ++(0,1) -- ++(-2,0) -- ++(0,2) -- ++(-1,0) -- cycle;
}

\newcommand{\drawRoutline}[2]{%
  \draw[thick,fill=skyblue,line width= 1pt]
    (#1,#2) -- ++(5,0) -- ++(0,2) -- ++(-5,0) -- cycle;
    \fill (0.5+#1,0.5+#2) circle[radius=4pt];
}

\title{Polynomials from tilings of rectangles}

\author[Ahlberg]{John Ahlberg}
\address{Viktor Rydbergs Gymnasium, Sweden}
\email{john.ahlberg137@gmail.com}

\author[Alexandersson]{Per Alexandersson}
\address{Department of Mathematics, Stockholm University, 106 91 Stockholm, Sweden}
\email{per.w.alexandersson@gmail.com}

\begin{document}

\begin{abstract}
We study tilings of rectangular boards using unit squares together
with a single type of big tile shaped as a Ferrers diagram. 
We derive generating functions for these tilings, 
prove real-rootedness and interlacing properties 
of associated independence polynomials, and establish connections 
with several sequences in the OEIS. 
Our results touch on tilings involving L-shaped polyominoes,
fault-free tilings, and cylindric variants.
We prove that tiling polynomials for two-column 
Ferrers shapes are real-rooted and form interlacing sequences.
\end{abstract}

\maketitle

\section{Introduction}

We consider tilings of a rectangular board with two types of tiles,
the $1{\times}1$ unit cell and one additional \emph{big tile} shaped as a Ferrers diagram~$\mu$.
We study enumerative questions and find new interpretations
of sequences in the OEIS (The Online Encyclopedia of Integer Sequences).
To our knowledge, this is the first systematic study of this family of tiling problems.

Our boards are parametrized by a positive integer~$d$,
the number of rows in which the bottom row of the big tile may be placed.
The board height is then $h + d - 1$,
where $h$ is the height of the big tile.

We consider the generating polynomials keeping track of the 
number of big tiles used---such polynomials can be seen as 
independence polynomials of certain graphs. 
Moreover, we provide efficient generating functions for counting the 
number of tilings with a fixed number of big tiles and fault lines. 
Recent work of Aggarwal, Koley, and Ram~\cite{AggarwalKoleyRam2026}
uses fault-line decompositions to study weighted tilings of
$2k{\times}n$ rectangles by $k{\times}1$ bars.

\subsection{Types of tilings}

We consider several variations of the basic tiling problem:

\begin{itemize}
\item \textbf{Fault-free tilings}:
Tilings without vertical fault lines (see Section~\ref{sec:faultfree}). 
These serve as building blocks for general tilings.

\item \textbf{Dense tilings}: Tilings with no empty column.
This is a relaxation of the fault-free condition, and leads to interesting 
combinatorial properties (see Section~\ref{sec:dense}).

\item \textbf{Cylindric tilings}: 
Tilings on boards with periodic boundary conditions, 
where the leftmost and rightmost columns are identified (see Section~\ref{sec:cylindric}).
\end{itemize}

Each variant leads to distinct generating 
functions and combinatorial properties.

\subsection{Connections with sequences in OEIS}

Our enumerative results recover several known sequences, 
such as the Fibonacci numbers (\oeis{A000045}) and (\oeis{A002478}). 
Additionally, we present several new sequences arising from tilings with 
larger Ferrers shapes and cylindric boundary conditions (see Table~\ref{tab:LCountTable}).

\medskip 

A central theme in our work is the \emph{real-rootedness} of tiling polynomials. 
In particular, we have the following results.
\begin{itemize}
\item For two-column rectangular Ferrers tiles $\mu=(k,k)$ with $d\leq k$,
the tiling polynomials are real-rooted (Theorem~\ref{thm:twoColumnRealRooted}).
\item Cylindric tilings using L-shapes yield real-rooted polynomials 
via the Chudnovsky--Seymour theorem on claw-free graphs (Theorem~\ref{thm:cylindricRealRooted}).
\item Example~\ref{ex:222delayed} gives tiling polynomials
that are real-rooted through $P_{55}$ but fail at $P_{56}$.

\end{itemize}

\subsection{Overview of sections}

We present the material roughly in order of increasing generality,
and the article also serves as a gentle introduction to
enumerative combinatorics and modern tools for studying polynomials.

The paper is organized as follows.
In Section~\ref{sec:L-poly}, we analyze tilings using L-polyominoes and establish the basic recursion.
Section~\ref{sec:anchor} introduces the concept of \emph{anchor words}, which serves as our primary bijective tool.
In Section~\ref{sec:ferrers}, we generalize these results to arbitrary Ferrers diagrams, establish the connection
to independence polynomials, first specialize to two-column Ferrers tiles, and then derive generating functions for hooks and rigid tiles.
We introduce a row-refined tiling polynomial and compare real-rootedness across polynomial variants in Section~\ref{sec:rowRefined}.
Finally, Section~\ref{sec:cylindric} considers the case with cylindric boundary conditions.

\section{Tilings involving \texorpdfstring{$L$}{L}-polyominoes}\label{sec:L-poly}

We gently begin with tilings of the $3{\times}n$-board with the $L$-triomino. 
\begin{example} \label{ex:3x3tiling}
We have the following six tilings for $n=3$:
\begin{equation}
\begin{tikzpicture}[scale=0.4,baseline=(current bounding box.center)]
\draw[step=1cm, thin] (0,0) grid (3,3);
\end{tikzpicture}
\quad
\begin{tikzpicture}[scale=0.4,baseline=(current bounding box.center)]
\draw[step=1cm, thin] (0,0) grid (3,3);
\drawLoutline{0}{1}
\end{tikzpicture}
\quad
\begin{tikzpicture}[scale=0.4,baseline=(current bounding box.center)]
\draw[step=1cm, thin] (0,0) grid (3,3);
\drawLoutline{1}{1}
\end{tikzpicture}
\quad
\begin{tikzpicture}[scale=0.4,baseline=(current bounding box.center)]
\draw[step=1cm, thin] (0,0) grid (3,3);
\drawLoutline{0}{0}
\end{tikzpicture}
\quad
\begin{tikzpicture}[scale=0.4,baseline=(current bounding box.center)]
\draw[step=1cm, thin] (0,0) grid (3,3);
\drawLoutline{1}{0}
\end{tikzpicture}
\quad
\begin{tikzpicture}[scale=0.4,baseline=(current bounding box.center)]
\draw[step=1cm, thin] (0,0) grid (3,3);
\drawLoutline{0}{0}
\drawLoutline{1}{1}
\end{tikzpicture}
\end{equation}
\end{example}

The following theorem gives an interpretation of the sequence \oeis{A002478}. 
This recursion appears in \cite{Deutsch2003} without proof.

\begin{theorem} \label{thm: L-tilings}
   Let $A_n$ be the number of tilings of the $3{\times}n$-board using L-triominos and unit squares. Then
   \begin{equation} \label{eq:Lrecursion}
     A_n=A_{n-1}+2A_{n-2}+A_{n-3}, \qquad A_0=1,\, A_1=1,\, A_2=3.
   \end{equation}
\end{theorem}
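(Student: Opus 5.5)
We would prove the recursion by conditioning on how the leftmost column of the board is covered, and showing that in every case a vertical fault line appears after one, two or three columns. As in Example~\ref{ex:3x3tiling}, the big tile is the L-triomino in a single fixed orientation, translations only. It occupies two vertically adjacent cells in its left column and the bottom cell of its right column. So every tile meeting a column $j$ either lies entirely in column $j$ or in columns $j-1,j$, or covers exactly two adjacent cells of column $j$ and one cell of column $j+1$.

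The key steps, for $n\ge 3$, are as follows, numbering rows $1,2,3$ from the bottom.

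\emph{Case 1:} no L-tile has its left column in column~$1$. Then column~$1$ consists of three unit squares. The rest is an arbitrary tiling of the $3{\times}(n-1)$ board, contributing $A_{n-1}$.

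\emph{Case 2:} an L-tile has its left column in rows $2,3$ of column~$1$. It then also covers row~$2$ of column~$2$, and row~$1$ of column~$1$ is a unit square. The free cells of column~$2$ are rows $1$ and $3$. These are not adjacent, so no L-tile can have its left column in column~$2$, and they must be unit squares. Column~$2$ cannot receive a cell from an L starting further left. Hence there is a fault line after column~$2$, contributing $A_{n-2}$.

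\emph{Case 3:} an L-tile has its left column in rows $1,2$ of column~$1$. It also covers row~$1$ of column~$2$, and row~$3$ of column~$1$ is a unit. The free cells of column~$2$ are rows $2,3$, and there are two subcases.
\begin{itemize}
\item If both are unit squares, we get a fault line after column~$2$, contributing $A_{n-2}$.
\item Otherwise an L-tile has its left column there, covering row~$2$ of column~$3$. The free cells of column~$3$ are then rows $1,3$, which are non-adjacent and so must be units. This gives a fault line after column~$3$, contributing $A_{n-3}$.
\end{itemize}

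Summing the cases gives $A_n=A_{n-1}+2A_{n-2}+A_{n-3}$. The cases are disjoint, and each decomposition is clearly invertible by concatenating the prescribed left block with an arbitrary tiling of the remaining board. The initial values come from direct inspection:
\begin{itemize}
\item $A_0=1$ (the empty tiling);
\item $A_1=1$ (no L fits in one column);
\item $A_2=3$ (all units, or one L in either of its two vertical positions).
\end{itemize}
These agree with the recursion extended by $A_{-1}=0$, and they are consistent with the six tilings for $n=3$ in Example~\ref{ex:3x3tiling}: $3+2+1=6$.

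The only delicate point is the exhaustiveness argument. We must verify that after each forced placement, the remaining free cells in the next column either form a non-adjacent pair, which forces unit squares, or form an adjacent pair that admits exactly the two options listed. We also need that no tile can reach back from further right into a column already handled. The latter holds because every L-tile extends only to the right of its left column. We expect this case check to be the main, though routine, obstacle.
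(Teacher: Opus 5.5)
Your proof is correct and follows the paper's own argument. Both condition on the leftmost column: all unit squares, an L covering the top-left corner, or an L covering the bottom-left corner with two subcases. These give fault lines after one, two or three columns and hence $A_n=A_{n-1}+2A_{n-2}+A_{n-3}$. Your explicit non-adjacency argument for forcing unit squares is just a more detailed version of what the paper shows pictorially.
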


\begin{proof}
Consider a tiling counted by $A_n$. There are three cases to consider, depending on how the leftmost column is tiled. 
In the first case, every square in the leftmost column is a unit square. 
In the second case, there is an $L$-triomino covering the 
top left corner. In the third case, the lower left corner is covered by an $L$-triomino. 
The first case corresponds to tilings counted by $A_{n-1}$. 
In the second case, the three remaining squares in the first two columns must be $1{\times1}$-squares, so the number of such tilings is $A_{n-2}$. 
The third case is split into two sub-cases as shown in the figure below, and these give $A_{n-2}$ and $A_{n-3}$ as the number of tilings. 
\begin{align*}
\begin{tikzpicture}[scale=0.4,baseline=(current bounding box.center)]
\draw[step=1cm, thin] (0,0) grid (5,3);
\draw[fill=white, line width= 1pt] (0,0) rectangle (1,1);
\draw[fill=white,line width= 1pt] (0,1) rectangle (1,2);
\draw[fill=white,line width= 1pt] (0,2) rectangle (1,3);
\fill[lightgray, opacity=0.5] (1,0) rectangle (5,3);
\end{tikzpicture}
&=
\begin{tikzpicture}[scale=0.4, baseline=(current bounding box.center)]
\draw[step=1cm, thin] (0,0) grid (5,3);
\draw[fill=softgreen, line width= 1pt] (0,0) rectangle (1,1);
\draw[fill=softgreen,line width= 1pt] (0,1) rectangle (1,2);
\draw[fill=softgreen,line width= 1pt] (0,2) rectangle (1,3);
\fill[lightgray, opacity=0.5] (1,0) rectangle (5,3);
\end{tikzpicture}
+
\begin{tikzpicture}[scale=0.4,baseline=(current bounding box.center)]
\draw[step=1cm, thin] (0,0) grid (5,3);
\drawLoutline{0}{1}
\draw[fill=softgreen,line width= 1pt] (1,2) rectangle (2,3);
\draw[fill=softgreen,line width= 1pt] (0,0) rectangle (1,1);
\draw[fill=softgreen,line width= 1pt] (1,0) rectangle (2,1);
\fill[lightgray, opacity=0.5] (2,0) rectangle (5,3);
\end{tikzpicture}
\\
&+
\begin{tikzpicture}[scale=0.4, baseline=(current bounding box.center)]
\draw[step=1cm, thin] (0,0) grid (5,3);
\drawLoutline{0}{0}
\draw[fill=softgreen,line width= 1pt] (0,2) rectangle (1,3);
\draw[fill=softgreen,line width= 1pt] (1,1) rectangle (2,2);
\draw[fill=softgreen,line width= 1pt] (1,2) rectangle (2,3);
\fill[lightgray, opacity=0.5] (2,0) rectangle (5,3);
\end{tikzpicture}
+
\begin{tikzpicture}[scale=0.4,baseline=(current bounding box.center)]
\draw[step=1cm, thin] (0,0) grid (5,3);
\drawLoutline{0}{0}
\drawLoutline{1}{1}
\draw[fill=softgreen,line width= 1pt] (0,2) rectangle (1,3);
\draw[fill=softgreen,line width= 1pt] (2,2) rectangle (3,3);
\draw[fill=softgreen,line width= 1pt] (2,0) rectangle (3,1);
\fill[lightgray, opacity=0.5,] (3,0) rectangle (5,3);
\end{tikzpicture}
\end{align*}
These cases now explain the recursion in \eqref{eq:Lrecursion}.
\end{proof}

The sequence \eqref{eq:Lrecursion} has an interpretation as the number of tilings of the $3{\times}n$-board using squares 
of sizes $1{\times}1$, $2{\times}2$ and $3{\times}3$. Such tilings are in bijection with the tilings in Theorem~\ref{thm: L-tilings}.
The bijection is given by 
\begin{equation}
\begin{tikzpicture}[scale=0.4,baseline=(current bounding box.center)]
\draw[fill=softgreen, line width= 1pt] (0,0) rectangle (1,1);
\end{tikzpicture}
\longrightarrow
    \begin{tikzpicture}[scale=0.4,baseline=(current bounding box.center)]
\draw[fill=softgreen, line width= 1pt] (0,0) rectangle (1,1);
\end{tikzpicture}\;,
\hspace{1cm}
\begin{tikzpicture}[scale=0.4,baseline=(current bounding box.center)]
\drawLoutline{0}{0}
\draw[fill=softgreen,line width= 1pt] (1,1) rectangle (2,2);
\end{tikzpicture}
\longrightarrow
\begin{tikzpicture}[scale=0.4,baseline=(current bounding box.center)]
\draw[fill=peachpink,line width= 1pt] (0,0) rectangle (2,2);
\end{tikzpicture}\;,
\hspace{1cm}
\begin{tikzpicture}[scale=0.4,baseline=(current bounding box.center)]
\drawLoutline{0}{0}
\drawLoutline{1}{1}
\draw[fill=softgreen,line width= 1pt] (0,2) rectangle (1,3);
\draw[fill=softgreen,line width= 1pt] (2,2) rectangle (3,3);
\draw[fill=softgreen,line width= 1pt] (2,0) rectangle (3,1);
\end{tikzpicture}
\longrightarrow
\begin{tikzpicture}[scale=0.4,baseline=(current bounding box.center)]
\draw[fill=lemonyellow,line width= 1pt] (0,0) rectangle (3,3);
\end{tikzpicture}.
\end{equation}

\subsection{Tilings with tall L-polyominoes} 

The \defin{$L_h$-polyomino} is the L-polyomino with width 2 
and height $h$. For example, below we have $L_2, L_3$ and $L_4$:
\begin{equation}
\begin{tikzpicture}[scale=0.4]
        \draw[thick,fill=skyblue,line width= 1pt]
    (0,0) -- ++(2,0) -- ++(0,1) -- ++(-1,0) -- ++(0,1) -- ++(-1,0) -- cycle;
     \fill (0.5,0.5) circle[radius=4pt];
\end{tikzpicture}
\quad
\begin{tikzpicture}[scale=0.4]
        \draw[thick,fill=skyblue,line width= 1pt]
    (0,0) -- ++(2,0) -- ++(0,1) -- ++(-1,0) -- ++(0,2) -- ++(-1,0) -- cycle;
     \fill (0.5,0.5) circle[radius=4pt];
\end{tikzpicture}
\quad
\begin{tikzpicture}[scale=0.4]
        \draw[thick,fill=skyblue,line width= 1pt]
    (0,0) -- ++(2,0) -- ++(0,1) -- ++(-1,0) -- ++(0,3) -- ++(-1,0) -- cycle;
    \fill (0.5,0.5) circle[radius=4pt];
\end{tikzpicture}
\end{equation}
All tiles considered in this article have a unique
square appearing in the lower left-hand corner,
and this is called the \defin{anchor square} of that tile.

\begin{lemma} \label{lem:anchor}
    Let $h\geq k\geq 1$ and consider a board with height $2h-1$ and width at least $k+1$. 
    Then the number of ways to place $k$ $L_h$-polyominoes with the anchor squares in the first $k$ columns on this board is $\binom{h}{k}$.
\end{lemma}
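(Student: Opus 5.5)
The plan is to encode each placement by the rows of its anchor squares and show that the admissible placements are exactly the strictly increasing sequences $1\le r_1<r_2<\dots<r_k\le h$. There are $\binom{h}{k}$ such sequences, which gives the count.

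First I fix coordinates. Columns are numbered $1,2,\dots$ from the left and rows $1,\dots,2h-1$ from the bottom. An $L_h$-polyomino with anchor square in column $i$ and row $r$ covers the cells $(i,r),(i,r+1),\dots,(i,r+h-1)$ of its vertical bar, together with the foot cell $(i+1,r)$. For the tile to fit on the board we need $1\le r\le h$, since the height is $2h-1$. The foot always fits because the width is at least $k+1$ and $i\le k$.

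Next I show that each of the first $k$ columns contains exactly one anchor. Two tiles anchored in the same column would each occupy $h$ cells of that column, and $2h>2h-1$, so they cannot be disjoint. Since there are $k$ tiles and $k$ columns, each column $i\in\{1,\dots,k\}$ holds exactly one anchor; let its row be $r_i$.

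The main step is the compatibility analysis between tiles. A tile anchored in column $i$ lives in columns $i$ and $i+1$ only. Hence tiles in columns $i$ and $j$ with $|i-j|\ge 2$ are automatically disjoint, and the only possible conflict is between consecutive tiles $i$ and $i+1$. Column $i+1$ contains only the foot cell $(i+1,r_i)$ of tile $i$ and the bar $\{r_{i+1},\dots,r_{i+1}+h-1\}$ of tile $i+1$. These are disjoint if and only if $r_i<r_{i+1}$ or $r_i\ge r_{i+1}+h$. The second option would force $r_i\ge 1+h$, contradicting $r_i\le h$. So tiles $i$ and $i+1$ are disjoint exactly when $r_i<r_{i+1}$.

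Combining these steps, placements correspond bijectively to strictly increasing sequences of length $k$ in $\{1,\dots,h\}$, which are counted by $\binom{h}{k}$. I expect no serious obstacle. The only points needing care are the argument that anchors cannot share a column and the exclusion of the case where the foot lies above the next bar, and both follow from the height bound $2h-1$.
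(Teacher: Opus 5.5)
Your proof is correct and follows the same route as the paper: at most one anchor per column (hence exactly one in each of the first $k$ columns), anchors confined to rows $1,\dots,h$, and non-overlap equivalent to strictly increasing anchor rows, so placements correspond to $k$-subsets of $\{1,\dots,h\}$. Your version additionally writes out the overlap check in column $i+1$, including ruling out the foot lying above the next bar, which the paper asserts without detail.
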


\begin{proof}
We first note that there must be exactly one anchor square in each of the $k$ columns, since there can be at most one anchor square in a single column. Moreover, the anchor squares must appear in increasing order from left to right, in order to avoid overlapping tiles. 
Furthermore, the anchor squares can only appear in the first $h$ rows, since the height of the board is $2h-1$. By choosing the rows where the anchor squares are placed we obtain a tiling, see Figure~\ref{fig:Binomiallemma}. This explains the binomial coefficient.
\begin{figure}[!ht]
\centering
\begin{tikzpicture}[scale=0.4,baseline=(current bounding box.center)]
\draw[thick,fill=skyblue,line width= 1.5pt]
(0,0) -- ++(2,0) -- ++(0,1) -- ++(-1,0) -- ++(0,5) -- ++(-1,0) -- cycle;
 \fill (0.5,0.5) circle[radius=4pt];
\draw[thick,fill=skyblue,line width= 1.5pt]
(1,2) -- ++(2,0) -- ++(0,1) -- ++(-1,0) -- ++(0,5) -- ++(-1,0) -- cycle;
\fill (1.5,2.5) circle[radius=4pt];
\draw[thick,fill=skyblue,line width= 1.5pt]
(2,3) -- ++(2,0) -- ++(0,1) -- ++(-1,0) -- ++(0,5) -- ++(-1,0) -- cycle;
\fill (2.5,3.5) circle[radius=4pt];
\draw[thick,fill=skyblue,line width= 1.5pt]
(3,5) -- ++(2,0) -- ++(0,1) -- ++(-1,0) -- ++(0,5) -- ++(-1,0) -- cycle; \fill (3.5,5.5) circle[radius=4pt];
\draw[step=1cm, thin] (0,0) grid (5,11);
\node at (-1,0.5) {1};
\node at (-1,1.5) {2};
\node at (-1,2.5) {3};
\node at (-1,3.5) {4};
\node at (-1,4.5) {5};
\node at (-1,5.5) {6};
\end{tikzpicture}
\caption{This shows a tiling for the case $k=4$, $h=6$. We need to choose four out of the six first rows, where the anchor squares are placed. In total there are $\binom{6}{4} = 15$ tilings.}
\label{fig:Binomiallemma}
\end{figure}
\end{proof}

Given a tiling $T$, we let $\defin{\bt(T)}$
denote the number of tiles which are not the unit square tiles.

\begin{definition} \label{def:tallpolynomial}
 Let \defin{$\mathcal{T}_h(d,n)$} be the set of tilings of the $(h{+}d{-}1)\times n$-board using unit squares and $L_h$-polyominoes, where $h$ is the height of the tile
 and $d$ is the number of rows in which an anchor square can be placed.
 We assume $d\leq h$, so that the board has height at most $2h-1$. This ensures there is at most one anchor square in each column.
 
 Let \defin{$P_{d,n}(t)$} be the generating polynomial which tracks the number of $L_h$-polyominoes used. That is,
 \begin{equation} \label{eq:tallLrecursion}
 \defin{P_{d,n}(t)} \coloneqq \sum_{T\in \mathcal{T}_h(d,n)} t^{\bt(T)}.
 \end{equation}
 \end{definition}

For example, $P_{2,3}(t)=1+4t+t^2$ as shown in Example~\ref{ex:3x3tiling}.
\begin{theorem}
 For $1\leq d \leq h$ we have the linear recursion
\begin{equation}\label{eq:umbralQ}
P_{d,n+1}(t)=\sum_{k=0}^d \binom{d}{k}t^k\cdot P_{d,n-k}(t), \qquad P_{d,0}(t)=1.
\end{equation}
Note that this recursion is independent of the height $h$ of the tile, as long as $h\geq d$.
\end{theorem}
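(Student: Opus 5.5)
The plan is a first-block decomposition by the leftmost run of columns containing anchor squares, in the spirit of the proof of Theorem~\ref{thm: L-tilings} and of Lemma~\ref{lem:anchor}. We use the convention $P_{d,m}(t)=0$ for $m<0$. This matches the small cases: $P_{d,1}=1$ and $P_{d,2}=1+dt$, and for $d=2$ we get $P_{2,3}=1+4t+t^2$.

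First I would record the local geometry on a board of height $h+d-1$ with $d\le h$. A tile with anchor in column $c$ and row $r$ must have $1\le r\le d$. It occupies column $c$ in rows $r,\dots,r+h-1$ and column $c+1$ in row $r$ only.

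\emph{Column condition.} Two anchors in the same column would force rows $r$ and $r'\ge r+h>d$. Hence there is at most one anchor per column.

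\emph{Adjacent columns.} Suppose there are anchors in adjacent columns $c,c+1$ with rows $r,r'$. The cell $(c+1,r)$ is used by the first tile, so $r'\ne r$. If $r'<r$, then the vertical bar of the second tile covers rows $r',\dots,r'+h-1\supseteq\{r'+1,\dots,h\}\ni r$, which is a contradiction. So $r'>r$. Conversely, if $r'>r$ the two tiles are disjoint.

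\emph{Distant columns.} Tiles whose anchors are two or more columns apart never meet.

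Consequently, a placement of big tiles is valid if and only if each column has at most one anchor, the last column has none, and anchor rows strictly increase along every run of consecutive anchor columns. The remaining cells are unit squares. None of these conditions involves $h$, which gives the independence from $h$.

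Now take a tiling of the $(h+d-1)\times(n+1)$ board. Let $k\ge 0$ be maximal such that columns $1,\dots,k$ all contain anchors. Then column $k+1$ contains no anchor, and since the last column never contains an anchor, $k\le n$. The only big tile reaching column $k+1$ is the one anchored in column $k$, and nothing crosses from column $k+1$ into column $k+2$. Hence the tiling splits uniquely into two parts:
\begin{itemize}
\item a filling of columns $1,\dots,k+1$, given by a strictly increasing choice of $k$ anchor rows from $\{1,\dots,d\}$, so there are $\binom{d}{k}$ of them, each of weight $t^k$;
\item an arbitrary tiling of the remaining $n-k$ columns, contributing $P_{d,n-k}(t)$.
\end{itemize}
This is a bijection, because any such pair reassembles into a valid tiling by the characterization above. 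Summing over $k$ gives $P_{d,n+1}=\sum_{k=0}^{d}\binom{d}{k}t^kP_{d,n-k}$. Terms with $k>d$ vanish since $\binom{d}{k}=0$, and terms with $k>n$ vanish by the convention.

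The main obstacle is the adjacency analysis, specifically the case $r'<r$, which uses $r\le d\le h$. This is exactly where the hypothesis $d\le h$ enters. The boundary convention for small $n$ also needs to be stated carefully.
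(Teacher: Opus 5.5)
Your proof is correct and is essentially the paper's argument: the paper also splits at the first vertical fault line, which lies right after the first anchor-free column $k+1$. It counts the initial fault-free block by the $\binom{d}{k}$ choices of increasing anchor rows (via Lemma~\ref{lem:anchor}) and the remainder by $P_{d,n-k}(t)$. Your explicit adjacency analysis just supplies, for general $d\le h$, the geometric facts the paper takes from Lemma~\ref{lem:anchor}, which is stated there only for board height $2h-1$.
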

\begin{proof}
Consider a tiling counted by $P_{d,n+1}(t)$.
The first vertical fault line occurs after column~$k+1$,
where $k$ is the number of anchor squares in the initial segment
(see~\eqref{eq:ltlingRecursion}); note that $0\leq k\leq d$.
The initial segment is a \emph{fault-free tiling} of width~$k+1$
(in the sense of Section~\ref{sec:faultfree}),
and by Lemma~\ref{lem:anchor} there are $\binom{d}{k}$ such tilings,
each using $k$ big tiles.
Everything after the fault line is an independent tiling counted by $P_{d,n-k}(t)$.

\begin{figure}[ht]
\centering
\begin{equation}\label{eq:ltlingRecursion}
\begin{tikzpicture}[scale=0.4,baseline=(current bounding box.center)]
\draw[thick,fill=skyblue,line width= 1.5pt]
(0,0) -- ++(2,0) -- ++(0,1) -- ++(-1,0) -- ++(0,3) -- ++(-1,0) -- cycle;
 \fill (0.5,0.5) circle[radius=4pt];
\draw[thick,fill=skyblue,line width= 1.5pt]
(1,2) -- ++(2,0) -- ++(0,1) -- ++(-1,0) -- ++(0,3) -- ++(-1,0) -- cycle;
\fill (1.5,2.5) circle[radius=4pt];
\draw[thick,fill=skyblue,line width= 1.5pt]
(2,3) -- ++(2,0) -- ++(0,1) -- ++(-1,0) -- ++(0,3) -- ++(-1,0) -- cycle;
\fill (2.5,3.5) circle[radius=4pt];
\draw[thick,fill=skyblue,line width= 1.5pt]
(4,2) -- ++(2,0) -- ++(0,1) -- ++(-1,0) -- ++(0,3) -- ++(-1,0) -- cycle; 
\draw[thick,fill=skyblue,line width= 1.5pt]
(7,3) -- ++(2,0) -- ++(0,1) -- ++(-1,0) -- ++(0,3) -- ++(-1,0) -- cycle; 
\draw[step=1cm, thin] (0,0) grid (11,7);
\node at (-1,0.5) {1};
\node at (-1,1.5) {2};
\node at (-1,2.75) {$\vdots$};
\node at (-1,3.5) {$d$};
\node at (1.5,8) {$k+1$};
\node at (8,8) {$n-k$};
\draw[dash pattern=on 4pt off 2pt, red, line width=2pt] (4,-2) -- (4,9);
\end{tikzpicture}
\end{equation}
\caption{A tiling in which the first vertical fault line occurs after column $k+1$. The red dashed line marks this fault line, separating an initial fault-free segment of width $k+1$ from an independent tiling of width $n-k$ to its right.}
\end{figure}
\end{proof}

\begin{remark}
The recursion in \eqref{eq:umbralQ} has a close resemblance to the recursion for the 
\emph{Touchard polynomials}
\begin{equation*}
T_{n+1}(x) = \sum_{k=0}^n \binom{n}{k} x \cdot  T_{n-k}(x), \quad T_0(x)=1,
\end{equation*}
where the coefficients are the Stirling numbers of the second kind,
see \oeis{A008277}.
\end{remark}

\section{Words from tilings with \texorpdfstring{$L$}{L}-polyominoes}\label{sec:anchor}

The notion of anchor squares is a helpful tool for analyzing tilings. This motivates the following definition.
\begin{definition} \label{def:anchorword}
An \defin{anchor word} is a word $a_1,a_2,\dots, a_n$ in the alphabet $\{1,2,\dots, d, \infty\}$ such that $a_n=\infty$ and
\begin{equation}
a_i\neq \infty \implies a_{i+1}\geq a_i+1\text{ for all } i=1,2\dots , n-1.
\end{equation}

Let \defin{$\anchor(d,n)$} denote the set of such anchor words.
\end{definition}
Throughout the paper, the parameter $d$ denotes the number of rows
in which an anchor square may be placed.
For a tile with $h$ rows, the board has height $h+d-1$;
the $(2h{-}1)\times n$ board from Definition~\ref{def:tallpolynomial}
is the special case $d=h$. 

\begin{example}
Here are all the elements in $\anchor(3,5)$ where a dash represents $\infty$:
\[
\begin{array}{llllll}
\gap\gap\gap\gap\gap & \gap\gap\gap\mathtt{1}\gap & \gap\gap\gap\mathtt{2}\gap & \gap\gap\gap\mathtt{3}\gap & \gap\gap\mathtt{1}\gap\gap & \gap\gap\mathtt{12}\gap \\
\gap\gap\mathtt{13}\gap & \gap\gap\mathtt{2}\gap\gap & \gap\gap\mathtt{23}\gap & \gap\gap\mathtt{3}\gap\gap & \gap\mathtt{1}\gap\gap\gap & \gap\mathtt{1}\gap\mathtt{1}\gap \\
\gap\mathtt{1}\gap\mathtt{2}\gap & \gap\mathtt{1}\gap\mathtt{3}\gap & \gap\mathtt{12}\gap\gap & \gap\mathtt{123}\gap & \gap\mathtt{13}\gap\gap & \gap\mathtt{2}\gap\gap\gap \\
\gap\mathtt{2}\gap\mathtt{1}\gap & \gap\mathtt{2}\gap\mathtt{2}\gap & \gap\mathtt{2}\gap\mathtt{3}\gap & \gap\mathtt{23}\gap\gap & \gap\mathtt{3}\gap\gap\gap & \gap\mathtt{3}\gap\mathtt{1}\gap \\
\gap\mathtt{3}\gap\mathtt{2}\gap & \gap\mathtt{3}\gap\mathtt{3}\gap & \mathtt{1}\gap\gap\gap\gap & \mathtt{1}\gap\gap\mathtt{1}\gap & \mathtt{1}\gap\gap\mathtt{2}\gap & \mathtt{1}\gap\gap\mathtt{3}\gap \\
\mathtt{1}\gap\mathtt{1}\gap\gap & \mathtt{1}\gap\mathtt{12}\gap & \mathtt{1}\gap\mathtt{13}\gap & \mathtt{1}\gap\mathtt{2}\gap\gap & \mathtt{1}\gap\mathtt{23}\gap & \mathtt{1}\gap\mathtt{3}\gap\gap \\
\mathtt{12}\gap\gap\gap & \mathtt{12}\gap\mathtt{1}\gap & \mathtt{12}\gap\mathtt{2}\gap & \mathtt{12}\gap\mathtt{3}\gap & \mathtt{123}\gap\gap & \mathtt{13}\gap\gap\gap \\
\mathtt{13}\gap\mathtt{1}\gap & \mathtt{13}\gap\mathtt{2}\gap & \mathtt{13}\gap\mathtt{3}\gap & \mathtt{2}\gap\gap\gap\gap & \mathtt{2}\gap\gap\mathtt{1}\gap & \mathtt{2}\gap\gap\mathtt{2}\gap \\
\mathtt{2}\gap\gap\mathtt{3}\gap & \mathtt{2}\gap\mathtt{1}\gap\gap & \mathtt{2}\gap\mathtt{12}\gap & \mathtt{2}\gap\mathtt{13}\gap & \mathtt{2}\gap\mathtt{2}\gap\gap & \mathtt{2}\gap\mathtt{23}\gap \\
\mathtt{2}\gap\mathtt{3}\gap\gap & \mathtt{23}\gap\gap\gap & \mathtt{23}\gap\mathtt{1}\gap & \mathtt{23}\gap\mathtt{2}\gap & \mathtt{23}\gap\mathtt{3}\gap & \mathtt{3}\gap\gap\gap\gap \\
\mathtt{3}\gap\gap\mathtt{1}\gap & \mathtt{3}\gap\gap\mathtt{2}\gap & \mathtt{3}\gap\gap\mathtt{3}\gap & \mathtt{3}\gap\mathtt{1}\gap\gap & \mathtt{3}\gap\mathtt{12}\gap & \mathtt{3}\gap\mathtt{13}\gap \\
\mathtt{3}\gap\mathtt{2}\gap\gap & \mathtt{3}\gap\mathtt{23}\gap & \mathtt{3}\gap\mathtt{3}\gap\gap. \\
\end{array}
\]
\end{example}

\begin{lemma} \label{lem:wordbijection}
Let $n\geq 0$, and assume that $1\leq d \leq h$. Then there is a bijection
\begin{equation}
 f:\anchor(d,n) \longrightarrow  \mathcal{T}_h(d,n).
\end{equation}
\end{lemma}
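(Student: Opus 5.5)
The natural choice for $f$ reads an anchor word column by column. Given $a=a_1a_2\cdots a_n\in\anchor(d,n)$, for each $i$ with $a_i\neq\infty$ we place an $L_h$-polyomino whose anchor square is the cell in column $i$, row $a_i$, and we fill every remaining cell with a unit square. Conversely, a tiling $T\in\mathcal{T}_h(d,n)$ has at most one anchor square per column, as observed in Definition~\ref{def:tallpolynomial}, because the board height $h+d-1\le 2h-1$ leaves no room for two vertical arms of height $h$ stacked in one column. We can therefore define $g(T)=a_1\cdots a_n$, where $a_i$ is the row of the anchor square in column $i$, or $\infty$ if column $i$ contains none. Clearly $g\circ f$ and $f\circ g$ are identities, provided both maps land in the claimed sets. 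So the proof reduces to checking that the defining condition of anchor words is exactly the condition for a valid, non-overlapping placement.

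For $g(T)\in\anchor(d,n)$ we argue as follows.
\begin{itemize}
\item An anchor square lies in a row $r$ with $r+h-1\le h+d-1$, so $r\le d$.
\item No anchor square lies in column $n$, since the tile has width $2$. Hence $a_n=\infty$.
\item Suppose $a_i$ and $a_{i+1}$ are both finite. The tile anchored at $(i,a_i)$ occupies the cell $(i+1,a_i)$, which is the right foot of the L. The vertical arm of the tile anchored in column $i+1$ covers rows $a_{i+1},\dots,a_{i+1}+h-1$ of that column. Disjointness requires that $a_i$ not lie in this range. Since $a_i\le d\le h\le a_{i+1}+h-1$, this forces $a_{i+1}>a_i$, that is $a_{i+1}\ge a_i+1$.
\item Tiles whose anchors are two or more columns apart never overlap, because each tile spans only two columns.
\end{itemize}

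For $f(a)\in\mathcal{T}_h(d,n)$ we reverse these checks.
\begin{itemize}
\item Each placed tile fits on the board: $a_i\le d$ gives top row $a_i+h-1\le h+d-1$, and $a_n=\infty$ ensures the foot never reaches a column $n+1$.
\item Tiles anchored in columns $i$ and $i+1$ meet only in column $i+1$. There the tile from column $i$ occupies only row $a_i$, while the tile from column $i+1$ occupies rows $\ge a_{i+1}>a_i$, so they are disjoint.
\item Covering the remaining cells with unit squares yields a tiling.
\end{itemize}

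The main obstacle is the overlap analysis for adjacent columns. We must confirm that the only possible collision between consecutive tiles is the foot of the left tile hitting the arm of the right tile. The case where the left tile's foot sits above the right tile's arm has to be excluded, and this is exactly where the hypothesis $d\le h$ enters. Throughout, the argument mirrors the proof of Lemma~\ref{lem:anchor}, which treated the case where every letter is finite.
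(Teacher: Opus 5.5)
Your proof is correct and takes the same approach as the paper: place an $L_h$-polyomino with anchor square at $(i,a_i)$ for each finite letter, and invert by reading off the anchor row in each column. You give more detail than the paper, which calls the converse ``straightforward.'' In particular, you verify that every tiling yields a valid anchor word: anchor rows are at most $d$, $a_n=\infty$, and $a_{i+1}>a_i$ follows from $a_i\le d\le h\le a_{i+1}+h-1$.
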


\begin{proof}
Suppose $a_1,\dots, a_n \in \anchor(d,n)$ and $h\geq d$.
For every $a_i \neq \infty$ place an $L_h$-polyomino with anchor square in column $i$ and row $a_i$. Every such $L_h$-polyomino is placed on the board with height $h+d-1$, since such a tile can reach at most height $d+h-1\leq 2h-1$. The conditions in Definition~\ref{def:anchorword} ensure that no two polyominoes overlap. 
Moreover, it is straightforward to see that every tiling corresponds to a unique anchor word.
 For example, the word $(1,3,4,\infty,3,\infty,\infty, 4,\infty,1, \infty) \in \anchor(4,11)$ corresponds to the tiling below:
\begin{center}
\begin{tikzpicture}[scale=0.4,baseline=(current bounding box.center)]
\draw[thick,fill=skyblue,line width= 1.5pt]
(0,0) -- ++(2,0) -- ++(0,1) -- ++(-1,0) -- ++(0,3) -- ++(-1,0) -- cycle;
 \fill (0.5,0.5) circle[radius=4pt];
\draw[thick,fill=skyblue,line width= 1.5pt]
(1,2) -- ++(2,0) -- ++(0,1) -- ++(-1,0) -- ++(0,3) -- ++(-1,0) -- cycle;
\fill (1.5,2.5) circle[radius=4pt];
\draw[thick,fill=skyblue,line width= 1.5pt]
(2,3) -- ++(2,0) -- ++(0,1) -- ++(-1,0) -- ++(0,3) -- ++(-1,0) -- cycle;
\fill (2.5,3.5) circle[radius=4pt];
\draw[thick,fill=skyblue,line width= 1.5pt]
(4,2) -- ++(2,0) -- ++(0,1) -- ++(-1,0) -- ++(0,3) -- ++(-1,0) -- cycle; 
\fill (4.5,2.5) circle[radius=4pt];
\draw[thick,fill=skyblue,line width= 1.5pt]
(7,3) -- ++(2,0) -- ++(0,1) -- ++(-1,0) -- ++(0,3) -- ++(-1,0) -- cycle; 
\fill (7.5,3.5) circle[radius=4pt];
\draw[thick,fill=skyblue,line width= 1.5pt]
(9,0) -- ++(2,0) -- ++(0,1) -- ++(-1,0) -- ++(0,3) -- ++(-1,0) -- cycle; 
\fill (9.5,0.5) circle[radius=4pt];
\draw[step=1cm, thin] (0,0) grid (11,7);
\node at (-1,0.5) {1};
\node at (-1,1.5) {2};
\node at (-1,2.5) {3};
\node at (-1,3.5) {4};
\end{tikzpicture}
\end{center}
\end{proof}

This bijection sends the number of integers (not $\infty$) in the anchor word to the number of L-polyominoes. Hence, the polynomial 
\begin{equation}
\defin{Q_{d,n}(t)} \coloneqq \sum_{\substack{w \in \anchor(d,n)}} t^{\bt(w)}
\end{equation}
coincides with the generating polynomial in Definition~\ref{def:tallpolynomial}. Note that \defin{$\bt(w)$} counts the number of integers in $w$.

\begin{corollary}
For $d\geq 0$, 
\begin{equation} \label{eq:refined}
Q_{d,n+1}(t)=\sum_{k=0}^d \binom{d}{k}t^k\cdot Q_{d,n-k}(t). 
\end{equation}
\end{corollary}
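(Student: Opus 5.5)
The plan is to prove the recursion directly on anchor words. Going through tilings would need $h\geq d$ and $d\geq 1$, while the corollary is stated for all $d\geq 0$, so a word-level argument is cleaner. For $d\geq 1$ one could instead combine Lemma~\ref{lem:wordbijection} with the recursion \eqref{eq:umbralQ}, choosing any $h\geq d$. The bijection $f$ sends $\bt(w)$ to the number of $L_h$-polyominoes, so $Q_{d,n}(t)=P_{d,n}(t)$ and \eqref{eq:refined} follows at once. Below I sketch the self-contained argument, which also covers $d=0$.

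\textbf{Setup.} Take $w=a_1\cdots a_{n+1}\in\anchor(d,n+1)$, and let $j$ be the smallest index with $a_j=\infty$. Such a $j$ exists because $a_{n+1}=\infty$. Put $k=j-1$.

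\textbf{The prefix.} The letters $a_1,\dots,a_k$ are all finite. Since each is followed by a letter $\geq$ itself plus one, we get $a_1<a_2<\cdots<a_k$ in $\{1,\dots,d\}$. Hence $k\leq d$, and the prefix is determined by a $k$-subset of $[d]$. Conversely, any strictly increasing sequence followed by $\infty$ satisfies the condition, since the constraint on $a_k$ is vacuous when $a_{k+1}=\infty$. So there are $\binom{d}{k}$ choices for the prefix, each contributing $t^k$.

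\textbf{The suffix.} The remaining word $a_{k+2}\cdots a_{n+1}$ has length $n-k$, ends in $\infty$, and satisfies the anchor condition internally. The only cross condition sits at position $k+1$, and it is vacuous because $a_{k+1}=\infty$. So the suffix ranges freely over $\anchor(d,n-k)$.

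\textbf{Conclusion.} The decomposition gives a weight-preserving bijection
\[
\anchor(d,n+1)\;\longleftrightarrow\;\bigsqcup_{k=0}^{d}\binom{[d]}{k}\times\anchor(d,n-k).
\]
Here $\anchor(d,m)$ is taken to be empty for $m<0$, and $\anchor(d,0)=\{\text{empty word}\}$ by convention. Summing weights yields \eqref{eq:refined}.

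The main point needing care is the boundary bookkeeping. When $k=n$, the $\infty$ at position $n+1$ is the final letter and the suffix is empty, matching $Q_{d,0}=1$. When $k>n$ the term should vanish, which requires $Q_{d,m}=0$ for $m<0$. For $d=0$ the only word is all $\infty$'s, and the recursion $Q_{0,n+1}=Q_{0,n}$ checks directly. I expect no real obstacle beyond stating these conventions explicitly.
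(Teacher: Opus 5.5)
Your argument is correct. The paper gives no separate proof. It obtains the corollary by the route you mention in passing: Lemma~\ref{lem:wordbijection} identifies $Q_{d,n}$ with $P_{d,n}$, and \eqref{eq:umbralQ} is proved by cutting the tiling at its first vertical fault line and counting the fault-free initial segment via Lemma~\ref{lem:anchor}.

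Your split at the first $\infty$ is exactly that decomposition read off on words. For $L_h$-tiles, the line between columns $i$ and $i+1$ is a fault line if and only if $a_i=\infty$. Your increasing-prefix count $\binom{d}{k}$ replaces the appeal to Lemma~\ref{lem:anchor}. That lemma is stated for height $2h-1$, i.e.\ $d=h$, and needs a small adaptation for general $d\le h$.

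Working directly on anchor words buys you independence from $h$ and from the hypothesis $1\le d\le h$ of Lemma~\ref{lem:wordbijection}. So your proof honestly covers the case $d=0$ allowed in the statement. It also makes explicit the convention $Q_{d,m}=0$ for $m<0$, which the paper's recursion uses implicitly as well.
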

For $d=3$, we have a connection with multiset $s$-Fibonacci numbers,
see \cite[Thm. 3]{ChuIrmakMillerSzalayZhang2024}, 
and the third row in Table~\ref{tab:LCountTable} below.

\begin{example}
Below is the data for the sizes of $\anchor(d,n)$ as $n=0,1,2, \dots.$

\begin{table}[!ht]
\caption{Number of anchor words in $\anchor(d,n).$}\label{tab:LCountTable}
\begin{tabular}{@{}cll@{}}
\toprule
$d$ & \textbf{Sequence} & \textbf{OEIS} \\
\midrule
1 & 1, 1, 2, 3, 5, 8, 13, 21, 34, 55, 89, 144, \dots & \oeis{A000045} \\
2 & 1, 1, 3, 6, 13, 28, 60, 129, 277, 595, 1278, 2745, \dots & \oeis{A002478} \\
3 & 1, 1, 4, 10, 26, 69, 181, 476, 1252, 3292, 8657, 22765, \dots & \oeis{A099234} \\
4 & 1, 1, 5, 15, 45, 140, 431, 1326, 4085, 12580, 38740, 119305, \dots & \oeis{A099235} \\
5 & 1, 1, 6, 21, 71, 251, 882, 3088, 10829, 37975, 133146, 466852, \dots & \oeis{A360090} \\
6 & 1, 1, 7, 28, 105, 413, 1624, 6349, 24851, 97315, 380989, 1491567, \dots & New \\
7 & 1, 1, 8, 36, 148, 638, 2766, 11908, 51284, 221049, 952613, 4104980, \dots & New \\
\bottomrule
\end{tabular}
\end{table}
 In general, the generating function for the sequence 
 indexed by $d$ is given by 
 \begin{equation}\label{talllgenfunc}
\frac{1}{1-x(x+1)^d}.
\end{equation}
We prove a stronger statement later in Theorem~\ref{thm:genFunc}.
\end{example}

We will now consider a refined count of anchor words;
\begin{equation}
Q_{d,n,j}(t) \coloneqq \sum_{\substack{w \in \anchor(d,n) \\ w_n=j}} t^{\bt(w)}, \qquad j\in\{1,2,\dotsc,d,\infty\}.
\end{equation}
\begin{theorem}
The polynomials $Q_{d,n,j}(t)$ satisfy the following recursions: 
\begin{align}
 Q_{d,n,j}(t) &= 
\begin{cases} Q_{d,n-1,\infty} (t) +
\displaystyle \phantom{t\cdot}\sum_{k=1}^d Q_{d,n-1,k}(t) & \text{ if } j=\infty \\
t\cdot Q_{d,n-1,\infty} (t) +
\displaystyle t \cdot \sum_{k=1}^{j-1} Q_{d,n-1,k}(t) & \text{ if } j < \infty,
\end{cases}
\\
Q_{d,1,j} (t) &=
\begin{cases}
1  & \text{ if } j=\infty \\
t  & \text{ if } j< \infty.
\end{cases}
\end{align}
In other words, 
\begin{equation} \label{eq:matrix1}
\begin{bmatrix}
Q_{d,n,\infty} \\
Q_{d,n,1} \\
Q_{d,n,2} \\
\vdots \\
Q_{d,n,d}
\end{bmatrix}
=
\begin{bmatrix} 
1 & 1 & 1 & \cdots & 1 \\
t & 0 & 0 & \cdots & 0 \\
t & t & 0 & \cdots & 0 \\
\vdots & \vdots & \vdots & \ddots & \vdots \\
t & t & t & \cdots & 0
\end{bmatrix}
\begin{bmatrix}
Q_{d,n-1,\infty} \\
Q_{d,n-1,1} \\
Q_{d,n-1,2} \\
\vdots \\
Q_{d,n-1,d}
\end{bmatrix}.
\end{equation}
\end{theorem}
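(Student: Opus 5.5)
The plan is to prove the recursion by deleting the last letter of a word. First we need to fix how $Q_{d,n,j}$ is read. Definition~\ref{def:anchorword} forces $a_n=\infty$, yet the initial values $Q_{d,1,j}=t$ for finite $j$ show that the refined count must drop this condition. So I will read $Q_{d,n,j}(t)$ as the generating polynomial, weighted by $t^{\bt(w)}$, of words $w=a_1\cdots a_n$ over $\{1,\dots,d,\infty\}$ that satisfy the adjacency rule of Definition~\ref{def:anchorword} for $i=1,\dots,n-1$ and have $a_n=j$. I will state this convention explicitly at the start of the proof. With it, the anchor words of $\anchor(d,n+1)$ are exactly the refined words of length $n+1$ ending in $\infty$, so $Q_{d,n+1,\infty}=Q_{d,n+1}$, which links the refined count back to the original one.

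The base case $n=1$ is immediate, since there is no adjacency condition. The single letter $\infty$ contributes $1$, and each single finite letter $j$ contributes $t$.

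For $n\ge 2$, consider the map $w\mapsto a_1\cdots a_{n-1}$. Truncation preserves the adjacency rule, so the prefix is a valid refined word of length $n-1$ ending in some letter $k$. Conversely, appending a letter $j$ to a valid word ending in $k$ yields a valid word exactly when the rule allows $k$ to be followed by $j$: either $k=\infty$, which imposes no constraint, or $k$ is finite and $j\geq k+1$. Here $\infty\geq k+1$ always holds.

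This gives the case split. If $j=\infty$, every ending letter $k\in\{1,\dots,d,\infty\}$ is allowed, and no big tile is added, so the weight is unchanged. This gives $Q_{d,n,\infty}=Q_{d,n-1,\infty}+\sum_{k=1}^d Q_{d,n-1,k}$. If $j$ is finite, the allowed previous letters are $k=\infty$ and $k\in\{1,\dots,j-1\}$, and the appended letter contributes one factor of $t$. This gives the second line of the recursion. Writing these equations row by row produces the matrix~\eqref{eq:matrix1}. The first row is all ones, and row $j$ consists of $t$ in the $\infty$-column together with $t$ in columns $1,\dots,j-1$, which is the strictly lower-triangular pattern with first column $t$.

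There is no real obstacle in the argument itself; the only delicate point is the convention about the last letter discussed above. Once that convention is stated, the proof is a routine bijective decomposition by the last letter.
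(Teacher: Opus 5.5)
Your proof is correct and takes the same route as the paper: build each word by appending one letter to a shorter word. Here $\infty$ may follow any letter, a finite $j$ may follow only $\infty$ or some $k\le j-1$, and each finite letter contributes a factor $t$. Your explicit convention that the refined count drops the requirement $a_n=\infty$ is what the paper leaves implicit, and your identification $Q_{d,n,\infty}=Q_{d,n}$ agrees with the paper's own data (e.g.\ $Q_{3,5,\infty}(1)=69=|\anchor(3,5)|$); the later phrasing $Q_{d,n}=Q_{d,n+1,\infty}$ in Theorem~\ref{thm:interlacing} therefore looks like an index slip in the paper, not a flaw in your reading.
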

\begin{proof}
In order to construct an anchor word of length $n$ ending with $\infty$, we can simply append $\infty$ to any anchor word of length $n-1$. This explains the first case. 

Similarly, to construct a word of length $n$ ending with $j < \infty$, we can append a $j$ to any word ending with something smaller than $j$. Note that we get an additional integer entry in this case, explaining the factor $t$.
\end{proof}

\subsection{Interlacing roots}

Our goal now is to show that all the polynomials in \eqref{eq:refined} are \defin{real-rooted}, i.e.,~all their zeros are real numbers. To do this, we introduce the notion of interlacing polynomials.
\begin{definition} 
The following definition is taken from \cite{Wagner1992} and \cite[p.461]{branden2015unimodality}.
Let $f$ and $g$ be polynomials with positive leading coefficients and with real roots $\{f_i\}$ and $\{g_i\}$ respectively. We say that \defin{$f$ interlaces $g$} if $\deg(f)+1=\deg(g)=d$ and 
\[
g_1\leq f_1\leq g_2\leq f_2\leq \cdots \leq f_{d-1}\leq g_d.
\]
Moreover, we say that \defin{$f$ alternates left of $g$} if $\deg(f)=\deg(g)=d$ and 
\[
f_1\leq g_1\leq f_2\leq g_2\leq \cdots \leq f_d\leq g_d.
\]
We say that $f$ \defin{interleaves} $g$ if either $f$ interlaces $g$ or $f$ alternates left of $g$. We write this as $f\interl g$. By convention, $0\interl 0$, $0\interl h$ and $h\interl 0$ whenever $h$ is a polynomial with a positive leading coefficient. 
A sequence $(f_i)_{i=1}^n$ of real-rooted polynomials is called \defin{interlacing} if $f_i \interl f_j$ for all $1\leq i<j\leq n$.
We let \defin{$\mathscr{F}_n^+$} be the family of interlacing sequences of length $n$ with nonnegative coefficients. 
\end{definition}
\begin{example} 
The sequence $(Q_{3,5,j})_{j\in\{\infty,1,2,3\}}$ forms an interlacing sequence, see Table~\ref{tab:Q35zeros}.
\begin{table}[h!]
\centering
\caption{Polynomials \( Q_{3,5,j} \) and their zeros.}\label{tab:Q35zeros}
\begin{tabular}{lll}
\toprule
\text{Polynomial} & \text{Expression} & \text{Zeros (rounded)} \\
\midrule
$Q_{3,5,\infty}$ & $20t^3 + 36t^2 + 12t + 1$ & $\{-1.4,\ -0.27,\ -0.13\}$ \\
$Q_{3,5,1}$ & $t^4 + 15t^3 + 9t^2 + t$ & $\{-14.38,\ -0.47,\ -0.15,\ 0\}$ \\
$Q_{3,5,2}$ & $4t^4 + 21t^3 + 10t^2 + t$ & $\{-4.73,\ -0.38,\ -0.14,\ 0\}$ \\
$Q_{3,5,3}$ & $10t^4 + 28t^3 + 11t^2 + t$ & $\{-2.35,\ -0.31,\ -0.14,\ 0\}$ \\
\bottomrule
\end{tabular}
\end{table}
\end{example}
The following theorem appears in \cite[Thm. 7.8.5]{branden2015unimodality}.

\begin{theorem} \label{thm:ftof}
Let $G$ be an $m{\times}n$ matrix of polynomials in $t$. Then 
$G : \mathscr{F}_n^+ \to \mathscr{F}_m^+$
if and only if
\begin{enumerate}
\item All entries of $G$ have nonnegative coefficients, and
\item For every $2\times2$ sub-matrix 
$\left[\begin{smallmatrix}
a(t)&b(t) \\ c(t)&d(t) 
\end{smallmatrix}\right]$ of $G$ and real numbers 
$\lambda, \mu > 0$, we have
\[
 (\lambda t + \mu) b(t) + d(t) \interl (\lambda t + \mu) a(t) + c(t).
 \]
\end{enumerate}
\end{theorem}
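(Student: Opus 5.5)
The natural route is to treat the two directions separately. Necessity follows by testing $G$ on very simple interlacing sequences. Sufficiency uses the cone structure of interleaving together with a Euclidean-type decomposition of interlacing pairs. Throughout, write $G=(G_{rj})$ and $(Gf)_r=\sum_j G_{rj}f_j$. Since $(f_i)\in\mathscr{F}_n^+$ is a condition on pairs $i<j$, it suffices to show that $(Gf)_r\interl (Gf)_s$ for every pair of rows $r<s$.

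\emph{Necessity.} First apply $G$ to the sequence $e_j$ having the constant $1$ in position $j$ and $0$ elsewhere. By the conventions $0\interl h$ and $h\interl 0$, this sequence lies in $\mathscr{F}_n^+$. Its image is the $j$-th column of $G$, so every entry of $G$ must have nonnegative coefficients, which is (1). Next fix columns $i<j$, rows $r<s$ and $\lambda,\mu>0$. Consider the sequence with $1$ in position $i$, $\lambda t+\mu$ in position $j$, and zeros elsewhere; it lies in $\mathscr{F}_n^+$ since $1\interl \lambda t+\mu$. Its image in rows $r,s$ consists of the two combinations $G_{ri}+(\lambda t+\mu)G_{rj}$ and $G_{si}+(\lambda t+\mu)G_{sj}$. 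Requiring these to interleave in the correct order gives exactly the $2\times 2$ condition (2), once the entries $a,b,c,d$ are matched to the positions as in the statement. If the orientation in the statement is transposed, one instead uses the reversed sequence, with $\lambda t+\mu$ in the earlier slot.

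\emph{Sufficiency.} The key facts I would use are the following classical ones (Hermite--Kakeya--Obreschkoff, or Hermite--Biehler in the form ``$f\interl g$ iff $g+\mathrm{i}f$ has no zeros in the upper half-plane''). For a fixed real-rooted $h$ with nonnegative coefficients, the sets $\{f: f\interl h\}$ and $\{g: h\interl g\}$, restricted to nonnegative coefficients, are convex cones. The plan then has three steps.
\begin{enumerate}
\item Reduce to $n=2$. Any element of $\mathscr{F}_n^+$ can be written as a nonnegative combination, with polynomial coefficients arising through a common interlacer, of ``two-slot'' sequences. I would try to realise this by choosing a polynomial that interlaces all the $f_i$ simultaneously and splitting each $f_i$ along it.
\item Analyse a two-slot interlacing pair $f\interl g$ with nonnegative coefficients. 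Divide, $g=(\lambda t+\mu)f+\rho$, which gives $\lambda,\mu\ge 0$ and $\rho\interl f$ (or a reversed relation). Iterating this Euclidean step writes the pair as a continued fraction of linear factors $\lambda t+\mu$.
\item Verify one Euclidean step at a time. Each step is exactly the kind of configuration controlled by the $2\times 2$ hypothesis (2). Applying (2) to every $2\times2$ minor, the cone property then assembles the step-wise relations into $(Gf)_r\interl (Gf)_s$.
\end{enumerate}
Boundary cases with $\lambda=0$ or $\mu=0$ follow by a limiting argument, since interleaving is preserved under limits that keep the degrees correct.

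The main obstacle is the sufficiency direction, specifically the reduction in step 1. The interleaving relation is not transitive and not bilinear. Knowing $G_{ri}f_i\interl G_{sj}f_j$ termwise therefore does not immediately give $\sum_i G_{ri}f_i\interl\sum_j G_{sj}f_j$. I would first attempt the common-interlacer decomposition described above. If it becomes unwieldy, I would fall back on the stability formulation: encode the whole sequence as $\sum_j f_j z_j$ being real stable in auxiliary variables, and show that $G$ preserves this stability using only the $2\times 2$ conditions. That stability-based argument is the one Brändén uses in \cite[Thm.~7.8.5]{branden2015unimodality}, which we may in any case cite directly.
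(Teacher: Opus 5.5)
Your necessity argument proves a different condition from (2), and the sufficiency direction is not established. The paper gives no proof of its own here; it only quotes Br\"and\'en's Theorem~7.8.5. Your final fallback of citing that result therefore agrees with the paper, but the argument you write out before it has these two problems.

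\emph{Necessity.} Your test vector, with $1$ in slot $i$ and $\lambda t+\mu$ in slot $j$, yields $a+(\lambda t+\mu)b \interl c+(\lambda t+\mu)d$. Here you combine the two columns inside each row and compare row $r$ with row $s$. Condition~(2) is of a different nature. It combines the two \emph{rows}, $(\lambda t+\mu)\cdot(\text{row } r)+(\text{row } s)$. It then asks that the resulting column-$j$ entry $(\lambda t+\mu)b+d$ interleave the column-$i$ entry $(\lambda t+\mu)a+c$, in reversed order.

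No relabeling of $a,b,c,d$ turns one condition into the other. In fact (2) is your condition applied to the anti-transposed block $\left[\begin{smallmatrix} d&b\\ c&a\end{smallmatrix}\right]$, so passing between them needs an argument. Your repair also fails. A vector with $\lambda t+\mu$ in slot $i$ and $1$ in a later slot is not in $\mathscr{F}_n^+$, because a linear polynomial never interleaves a constant in that order.

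The missing idea is a duality step. If $u\interl v$ have nonnegative coefficients, then $u\interl(\lambda t+\mu)u+v$. So, for inputs supported on slots $i<j$, the row combination $(p,q)=\bigl((\lambda t+\mu)a+c,\,(\lambda t+\mu)b+d\bigr)$ sends every $(f_1,f_2)\in\mathscr{F}_2^+$ to a real-rooted polynomial $pf_1+qf_2$. You then need the lemma that this property forces $q\interl p$. That lemma is the converse of the easy fact that $q\interl p$ and $f_1\interl f_2$ make $pf_1+qf_2$ real-rooted. Your proposal neither states nor proves it, and without it (2) does not follow.

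\emph{Sufficiency.} Your three-step plan is not a proof, as you acknowledge.
\begin{itemize}
\item Step~1 is never made precise.
\item Step~3 gives no mechanism by which (2), which constrains combinations of the rows of $G$, controls the Euclidean steps applied to the \emph{input} pair. It also does not explain how the two-slot pieces are reassembled, given that $\interl$ is not bilinear.
\item Step~2 is off as well. Ordinary division with remainder produces a remainder with negative leading coefficient, so you leave the nonnegative setting. The step that stays inside it is $g=(\lambda t+\mu)f+t\,h$ with $h\interl f$ and $h$ having nonnegative coefficients.
\item The proposed stability encoding is wrong as stated. Already for the pair $(1,t)$, the polynomial $z_1+tz_2$ vanishes at $t=z_2=e^{3\pi \mathrm{i}/4}$, $z_1=\mathrm{i}$, all in the upper half-plane.
\end{itemize}
As it stands, the theorem in both directions rests entirely on the citation, which is all the paper does. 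If you cite, cite both directions, rather than presenting a necessity argument that yields a different condition.
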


We are now ready to prove that the polynomials arising from the recursion in \eqref{eq:matrix1}
form an interlacing sequence.
\begin{theorem} \label{thm:interlacing}
The sequence $(Q_{d,n,j})_{j\in\{\infty,1,\dotsc,d\}}$ forms an interlacing sequence.
Moreover, the polynomials $Q_{d,n}(t)=Q_{d,n+1,\infty}(t)$ have only real zeros. 
\end{theorem}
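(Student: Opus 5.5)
We argue by induction on $n$, using Theorem~\ref{thm:ftof} applied to the transfer matrix $G$ in \eqref{eq:matrix1}. The base case $n=1$ is the vector $(1,t,t,\dots,t)$. It is interlacing: $1\interl t$, because a constant interlaces a linear polynomial with positive leading coefficient, and $t\interl t$, because equal polynomials alternate left of each other (the defining inequalities are non-strict). If $G$ maps $\mathscr{F}_{d+1}^+$ into itself, then $(Q_{d,n,j})_j = G^{n-1}(Q_{d,1,j})_j$ is interlacing for every $n$. The real-rootedness of $Q_{d,n}=Q_{d,n+1,\infty}$ then follows at once, since every member of an interlacing sequence is real-rooted by definition. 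Alternatively, $Q_{d,n+1,\infty}=\sum_j Q_{d,n,j}$, and a sum of an interlacing family with nonnegative coefficients is real-rooted.

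It remains to check the two conditions of Theorem~\ref{thm:ftof}. Condition (1) holds because every entry of $G$ is $0$, $1$ or $t$. For condition (2), we classify the $2\times 2$ submatrices. Index rows and columns by $\infty,1,\dots,d$. The first row is all ones. Row $j\ge 1$ has $t$ in columns $\infty,1,\dots,j-1$ and $0$ elsewhere. A submatrix uses rows $r<s$ and columns $p<q$, with entries $a=G_{rp}$, $b=G_{rq}$, $c=G_{sp}$, $d=G_{sq}$. Because each row is a prefix of $t$'s followed by zeros (or all ones), and the prefixes grow with the row index, only a few patterns can occur:
\begin{itemize}
\item $r=\infty$: the pattern is $\left[\begin{smallmatrix}1&1\\ t&t\end{smallmatrix}\right]$, $\left[\begin{smallmatrix}1&1\\ t&0\end{smallmatrix}\right]$ or $\left[\begin{smallmatrix}1&1\\ 0&0\end{smallmatrix}\right]$.
\item $r\ge1$: the pattern is $\left[\begin{smallmatrix}t&t\\ t&t\end{smallmatrix}\right]$, $\left[\begin{smallmatrix}t&0\\ t&t\end{smallmatrix}\right]$, $\left[\begin{smallmatrix}t&0\\ t&0\end{smallmatrix}\right]$, $\left[\begin{smallmatrix}0&0\\ t&t\end{smallmatrix}\right]$, $\left[\begin{smallmatrix}0&0\\ t&0\end{smallmatrix}\right]$ or $\left[\begin{smallmatrix}0&0\\ 0&0\end{smallmatrix}\right]$.
\end{itemize}
The pattern $b\neq0$ with $a=0$ cannot occur, since $q>p$. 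The same holds for $d\ne0$ with $c=0$.

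For each pattern we verify $(\lambda t+\mu)b+d\interl(\lambda t+\mu)a+c$ with $\lambda,\mu>0$. In most cases the two sides are equal, one side is $0$, or they are proportional. Examples:
\begin{itemize}
\item $\left[\begin{smallmatrix}1&1\\ t&t\end{smallmatrix}\right]$: both sides equal $(\lambda+1)t+\mu$, and $f\interl f$ holds.
\item $\left[\begin{smallmatrix}1&1\\ t&0\end{smallmatrix}\right]$: we need $\lambda t+\mu\interl(\lambda+1)t+\mu$. The roots are $-\mu/\lambda\le -\mu/(\lambda+1)$, so the left polynomial alternates left of the right one.
\item $\left[\begin{smallmatrix}t&0\\ t&t\end{smallmatrix}\right]$: we need $t\interl t(\lambda t+\mu+1)$. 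The roots $-(\mu+1)/\lambda\le 0\le 0$ interlace.
\item $\left[\begin{smallmatrix}0&0\\ t&t\end{smallmatrix}\right]$ and $\left[\begin{smallmatrix}t&t\\ t&t\end{smallmatrix}\right]$: the two sides are equal.
\item Patterns with a zero side: these are covered by the convention $0\interl h$ and $h\interl 0$.
\end{itemize}

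The main obstacle is getting the orientation of the interlacing right in the asymmetric cases, that is, checking that the root inequalities go in the direction required by the definition of $\interl$ rather than the reverse. In particular, for $\left[\begin{smallmatrix}1&1\\ t&0\end{smallmatrix}\right]$ we must confirm $\lambda t+\mu\interl(\lambda+1)t+\mu$ and not the converse, which reduces to $-\mu/\lambda\le-\mu/(\lambda+1)$. This holds. The other delicate point is the degenerate conventions: common roots at $0$ and equal polynomials, which are handled by the non-strict inequalities in the definition.
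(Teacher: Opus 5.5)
Your proof is correct and follows the same route as the paper: verify condition (2) of Theorem~\ref{thm:ftof} on the $2\times 2$ submatrices of the transfer matrix in \eqref{eq:matrix1}, then propagate interlacing from the initial vector $(1,t,\dots,t)$. Your enumeration of the possible patterns is in fact slightly more careful than the paper's, which lists the impossible $\left[\begin{smallmatrix}t&0\\0&0\end{smallmatrix}\right]$ and omits $\left[\begin{smallmatrix}0&0\\t&t\end{smallmatrix}\right]$; you also make explicit the base case that the paper leaves implicit.
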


\begin{proof}
The $2{\times}2$ sub-matrices of the matrix in \eqref{eq:matrix1} are:
\begin{align*}
\begin{array}{ccccc}
\begin{bmatrix}
0 & 0\\
0 & 0
\end{bmatrix},
&
\begin{bmatrix}
0 & 0\\
t & 0
\end{bmatrix},
&
\begin{bmatrix}
1 & 1\\
0 & 0
\end{bmatrix},
&
\begin{bmatrix}
1 & 1\\
t & 0
\end{bmatrix},
&
\begin{bmatrix}
1 & 1\\
t & t
\end{bmatrix},
\\[10pt]
\begin{bmatrix}
t & 0\\
0 & 0
\end{bmatrix},
&
\begin{bmatrix}
t & 0\\
t & 0
\end{bmatrix},
&
{\color{blue}\begin{bmatrix}
t & 0\\
t & t
\end{bmatrix}},
&
\begin{bmatrix}
t & t\\
t & t
\end{bmatrix}.
&
\phantom{\begin{bmatrix}t&t\\t&t\end{bmatrix}}
\end{array}
\end{align*}

By Theorem~\ref{thm:ftof} it is enough to verify that these nine matrices satisfy the conditions. 
We only verify the eighth matrix in detail---the other cases are treated in a similar fashion. 
So in this case the second condition becomes
\[
(\lambda t + \mu) \cdot 0 + t \interl (\lambda t + \mu) t + t \iff t\interl (\lambda t + \mu) t+t.
\] 
This is true since the left-hand side has 0 as a zero, while the right-hand side has 0 and $-\frac{\mu+1}{\lambda}$ as zeros.
\end{proof}

\section{Tiling with Ferrers diagrams}\label{sec:ferrers}

\begin{definition}
Let $\mu = (\mu_0, \mu_1, \mu_2, \dots, \mu_\ell)$ be an integer partition.
The \defin{Ferrers tile} determined by $\mu$ is 
the polyomino with $\ell+1$ columns of squares where 
column $i$ (indexed from 0) contains $\mu_i$ squares. For example, $\mu= (4,3,3,2,1)$
corresponds to the polyomino below:
\begin{center}
\begin{ytableau}
\ \\
\ & \ & \ \\
\ & \ & \ & \ \\
\ & \ & \ & \ & \ \\
\none[4]&\none[3]& \none[3]&\none[2]& \none[1] \\
\end{ytableau}
\end{center}

We let \defin{$\mathcal{T}_\mu(d,n)$} be the set of tilings of the $(\mu_0{+}d{-}1){\times}n$-board, using unit squares and Ferrers tiles determined by $\mu$, where $d$ is the number of rows available for anchor square placement.
We think of $\mu_0$ as sufficiently large compared to $d$, in particular $d\leq \mu_0$.
Then the board has height at most $2\mu_0-1$, and in particular two Ferrers tiles cannot have anchor squares in the same column:
if two anchors were placed in the same column, then their rows would differ by at most $d-1\leq \mu_0-1$, while each tile occupies $\mu_0$ consecutive rows in its first column, forcing an overlap.

An \defin{anchor word for $\mu$} is a word $a_1,a_2,\dots, a_n$ in the alphabet $\{1,2,\dots, d, \infty\}$ such that 
for every $i=1,2,\dots, n$, we have the implication
\begin{equation} \label{anchorwordinequalites}
a_i\neq \infty \implies a_{i+k}\geq a_i+\mu_k\text{ for all } k=1,2,\dots, \ell,
\end{equation}
and $a_{n-\ell+1}=a_{n-\ell+2}=\cdots=a_n=\infty$.
Let \defin{$\anchor_{\mu}(d,n)$} denote the set of such anchor words.

\begin{proposition} \label{anchortotiling}
There is a bijection between $\anchor_{\mu}(d,n)$ and $\mathcal{T}_\mu(d,n)$ whenever $1\leq d \leq \mu_0$.
\end{proposition}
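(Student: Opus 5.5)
The natural approach mirrors Lemma~\ref{lem:wordbijection}. Given an anchor word $a_1,\dots,a_n\in\anchor_\mu(d,n)$, we place, for every $i$ with $a_i\neq\infty$, a Ferrers tile whose anchor square sits in column $i$ and row $a_i$. We then fill every cell not covered by a big tile with a unit square. The map sends the word to this tiling. The inverse reads off, column by column, the row of the anchor square lying in that column, recording $\infty$ if there is none. As explained just before the proposition, the hypothesis $d\le\mu_0$ guarantees at most one anchor square per column, so this inverse is well defined. The two maps are clearly mutually inverse once we know both land in the right sets.

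First we check that the image of an anchor word is a valid tiling. For containment, a tile anchored at $(i,a_i)$ occupies columns $i,\dots,i+\ell$. Since $a_{n-\ell+1}=\dots=a_n=\infty$, every anchor lies in a column $i\le n-\ell$, so the tile fits horizontally. Vertically it occupies rows $a_i,\dots,a_i+\mu_0-1\le d+\mu_0-1$, which is the board height; the other columns are shorter because $\mu$ is a partition. For disjointness, take two tiles anchored in columns $i<i'$ with $i'-i=k\le\ell$; tiles further apart share no columns. In column $i'$, the first tile occupies rows $a_i,\dots,a_i+\mu_k-1$, while the second tile occupies rows from $a_{i'}$ upward. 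The condition $a_{i'}\ge a_i+\mu_k$ puts the second tile entirely above the first in column $i'$. In each later shared column $i'+m$, the first tile reaches only up to row $a_i+\mu_{k+m}-1\le a_i+\mu_k-1$, while the second starts at $a_{i'}$. So the tiles are disjoint.

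The main step is the converse: the word read from an arbitrary tiling $T\in\mathcal{T}_\mu(d,n)$ satisfies the inequalities~\eqref{anchorwordinequalites}. Anchors lie in rows $1,\dots,d$, since the tile needs $\mu_0$ rows above and including its anchor. They also lie in columns $\le n-\ell$, since the tile needs $\ell$ more columns, which gives the trailing $\infty$'s.

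For the inequality, let tiles be anchored at $(i,a_i)$ and $(i+k,a_{i+k})$ with $1\le k\le\ell$. Both tiles meet column $i+k$. There the first tile occupies the interval $[a_i,a_i+\mu_k-1]$ and the second occupies $[a_{i+k},a_{i+k}+\mu_0-1]$. Disjointness forces either $a_{i+k}\ge a_i+\mu_k$, which is what we want, or $a_{i+k}+\mu_0-1<a_i$. The second alternative is impossible, because $a_i\le d\le\mu_0$ and $a_{i+k}\ge1$ give $a_{i+k}+\mu_0-1\ge\mu_0\ge a_i$. This is exactly where $d\le\mu_0$ is used a second time, and it is the only delicate point. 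Once it is settled, the two constructions are inverse by design, which proves the proposition.
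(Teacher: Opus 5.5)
Your proposal is correct and follows the same approach as the paper. You place a tile at $(i,a_i)$ for each finite letter, and conversely you read off anchor rows column by column, using $d\le\mu_0$ to guarantee at most one anchor per column. You are also more thorough than the paper, which never explicitly checks that the word read from a tiling satisfies the anchor-word inequalities; your observation that the alternative $a_{i+k}+\mu_0-1<a_i$ is excluded because $a_i\le d\le\mu_0$ supplies exactly that verification.
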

\begin{proof}
Given an anchor word $a_1,\dots,a_n\in \anchor_\mu(d,n)$, place a Ferrers tile with anchor square in column $i$ and row $a_i$ whenever $a_i\neq \infty$.
The conditions in \eqref{anchorwordinequalites} ensure that tiles in different columns do not overlap, and the assumption $d\leq \mu_0$ ensures that no two tiles can have anchor squares in the same column.
Conversely, every tiling in $\mathcal{T}_\mu(d,n)$ determines a unique anchor word by recording the row of the anchor square in each column and writing $\infty$ in columns with no anchor square.
These two constructions are inverse to each other.
\end{proof}
We introduce the generating function
\begin{equation} \label{eq:tilinggf}
\gF_{\mu,d}(x,t)\coloneqq \sum_{n\geq0}\sum_{w\in \anchor_{\mu}(d,n)} t^{\bt(w)}x^n
= \sum_{n\geq0} P_{\mu,d,n}(t)x^n.
\end{equation}
We refer to \defin{$P_{\mu,d,n}(t)$}
as a \defin{tiling polynomial}.
\end{definition} 
Note that this generalizes the definition in Definition~\ref{def:anchorword}.
Observe that $\mu_0$ does not show up in \eqref{anchorwordinequalites}, so we have that
\begin{equation} \label{anchorignorefirstcolumn}
\anchor_{(\mu_0,\mu_1,\dots,\mu_\ell)} (d,n) = \anchor_{(\mu_1,\mu_1,\dots,\mu_\ell)} (d,n).
\end{equation}

\begin{lemma}
Suppose $\mu$ and $\nu$ are integer partitions such that $\mu \subseteq \nu$, i.e. $\mu_1 \leq \nu_1, \mu_2 \leq \nu_2, \dots$. Then 
\begin{equation}
|\anchor_\mu (d,n)| \geq |\anchor_\nu (d,n)|.
\end{equation}
\end{lemma}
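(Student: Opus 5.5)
The plan is to prove the stronger statement that there is an inclusion of sets
\[
\anchor_\nu(d,n) \subseteq \anchor_\mu(d,n),
\]
from which the inequality of cardinalities follows at once. Write $\mu=(\mu_0,\dots,\mu_\ell)$ and $\nu=(\nu_0,\dots,\nu_m)$. The containment $\mu\subseteq\nu$ means $\mu_k\le\nu_k$ for all $k$, where we set $\mu_k=0$ for $k>\ell$. Since $\mu_\ell\ge 1$ and $\nu_\ell\ge\mu_\ell\ge 1$, we get $\ell\le m$. By \eqref{anchorignorefirstcolumn} the entry $\mu_0$ plays no role in the definition of anchor words, so only the entries with $k\ge 1$ matter.

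Take a word $a_1,\dots,a_n\in\anchor_\nu(d,n)$. I will check the two defining conditions for $\anchor_\mu(d,n)$ in turn.

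First, the trailing condition. We need $a_{n-\ell+1}=\cdots=a_n=\infty$. The word satisfies the corresponding condition for $\nu$, namely $a_{n-m+1}=\cdots=a_n=\infty$. Since $\ell\le m$, the required positions form a subset of the positions already known to be $\infty$.

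Second, the inequalities \eqref{anchorwordinequalites}. Suppose $a_i\neq\infty$ and $1\le k\le \ell$. The trailing $\infty$'s for $\nu$ force $i\le n-m\le n-\ell$, so the index $i+k$ lies in $\{1,\dots,n\}$. Because $k\le\ell\le m$, the $\nu$-condition gives
\[
a_{i+k}\ \ge\ a_i+\nu_k\ \ge\ a_i+\mu_k .
\]
This inequality is read with the usual convention that $\infty$ exceeds every integer. In particular, if $a_{i+k}=\infty$ then the inequality holds trivially.

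The $\nu$-word therefore satisfies every constraint defining $\anchor_\mu(d,n)$, which gives the inclusion. The alphabet $\{1,\dots,d,\infty\}$ is the same for both sets, so no further check is needed.

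There is no real obstacle here. The only point needing care is bookkeeping: the two partitions may have different numbers of columns. This is handled by padding $\mu$ with zeros and by observing that the trailing-$\infty$ requirement for $\nu$ both implies the one for $\mu$ and keeps all indices $i+k$ within the word. I would also add a remark that the bijection of Proposition~\ref{anchortotiling} is not needed for this argument, since the statement is purely about words.
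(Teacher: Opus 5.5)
Your proof is correct and uses the same argument as the paper: the constraints defining $\anchor_\nu(d,n)$ are stronger than those defining $\anchor_\mu(d,n)$, so $\anchor_\nu(d,n)\subseteq\anchor_\mu(d,n)$. The paper states this in one line, and your additional bookkeeping fills in details it leaves implicit: you show $\ell\le m$, that the trailing $\infty$'s required for $\nu$ cover those required for $\mu$, and that every index $i+k$ stays within the word.
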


\begin{proof}
Every anchor word of type $\nu$ is an anchor word of type $\mu$, since the conditions in \eqref{anchorwordinequalites} are more relaxed. Intuitively there are more tilings when using smaller tiles.
\end{proof}

\subsection{Connection with independence polynomials}

The \defin{independence polynomial} of the 
graph $G$, introduced by Gutman and Harary~\cite{GutmanHarary1983}, is defined as
\[
I(G, t) = \sum_{k \geq 0} i_k(G) t^k,
\]
where $i_k(G)$ is the number of independent sets of size $k$. 
An independent set of a graph $G$ is a set of vertices such that no two vertices in the set are adjacent.

The following lemma shows that the polynomials 
in \eqref{eq:tilinggf} have an alternative interpretation.
\begin{lemma}
Every tiling polynomial is an 
\emph{independence polynomial} 
of some graph. 
\end{lemma}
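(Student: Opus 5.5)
The plan is to write down the natural \emph{conflict graph} of possible big-tile placements and check that tilings are exactly its independent sets. Fix $\mu$, $d\leq\mu_0$ and $n$. Let $G=G_{\mu,d,n}$ have as vertex set all admissible placements of a single Ferrers tile on the $(\mu_0{+}d{-}1)\times n$ board. Concretely, the vertices are the pairs $(i,r)$ with $1\leq i\leq n-\ell$ and $1\leq r\leq d$: a tile whose anchor square sits in column $i$ and row $r$. The tile then lies inside the board, since it spans columns $i,\dots,i+\ell$ and rows up to $r+\mu_0-1\leq \mu_0+d-1$. Two distinct vertices are joined by an edge exactly when the corresponding tiles share a cell.

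The key step is a bijection between $\mathcal{T}_\mu(d,n)$ and the independent sets of $G$. A tiling is determined by which big tiles it uses, because every remaining cell must be filled by a unit square, and that filling is forced and always possible. So a tiling is the same thing as a set of pairwise non-overlapping big-tile placements inside the board, which is precisely an independent set of $G$. Under this bijection $\bt(T)$ equals the size of the independent set. Hence
\[
P_{\mu,d,n}(t)=\sum_{T\in\mathcal{T}_\mu(d,n)}t^{\bt(T)}=\sum_{k\geq0} i_k(G)\,t^k = I(G,t).
\]
To stay within the paper's framework, I will phrase this through Proposition~\ref{anchortotiling}. Map an anchor word $w$ to the set $\{(i,a_i): a_i\neq\infty\}$. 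The condition that the last $\ell$ letters equal $\infty$ matches the column range of the vertices. I will then check that the inequalities \eqref{anchorwordinequalites} say exactly that no two chosen placements overlap.

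The only point needing care is this last equivalence, namely that overlap of two placements $(i,r)$ and $(j,s)$ with $i<j$ is characterized by the anchor-word condition. Take $k=j-i$. If $k>\ell$ the tiles occupy disjoint column sets, so there is no overlap. If $1\leq k\leq\ell$, the first tile occupies rows $r,\dots,r+\mu_k-1$ in column $j$. The second tile occupies rows $s,\dots,s+\mu_0-1$ in column $j$ and, in each later column $j+m$, rows $s,\dots,s+\mu_m-1$. Since $\mu$ is weakly decreasing, the first tile's column $j+m$ lies in rows $r,\dots,r+\mu_{k+m}-1$ with $\mu_{k+m}\leq\mu_k$.

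Suppose first that $s\geq r+\mu_k$. Then in column $j$ the second tile starts above the first. In every later shared column the first tile's top row is $r+\mu_{k+m}-1\leq r+\mu_k-1<s$, so again there is no overlap.

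Now suppose $s<r+\mu_k$. Consider the intervals $[r,r+\mu_k-1]$ and $[s,s+\mu_0-1]$ in column $j$. The first has top above $s$ and the second has top at or above $r$, because $s+\mu_0-1\geq 1+d-1\geq r$ using $\mu_0\geq d$. So the two intervals intersect, and the tiles overlap in column $j$.

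Same-column pairs ($i=j$) always overlap by the argument already given in the definition, using $d\leq\mu_0$. That case is consistent too: an anchor word has a single letter per column, and correspondingly $G$ contains all edges between vertices in the same column. This completes the identification $P_{\mu,d,n}(t)=I(G_{\mu,d,n},t)$.
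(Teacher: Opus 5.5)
Your proof is correct and follows the paper's argument. Both build the conflict graph of single big-tile placements and identify tilings with its independent sets, so that $\bt(T)$ is the size of the set. Your extra check is a useful addition: you verify that the inequalities in \eqref{anchorwordinequalites} express exactly non-overlap of placements, using $d\leq\mu_0$ and the monotonicity of $\mu$, whereas the paper leaves this step to Proposition~\ref{anchortotiling}.
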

\begin{proof}
Let $R$ be the rectangular board. 
We construct a graph $G=(V, E)$, called the \defin{conflict graph} (or intersection graph), as follows:
\begin{itemize}
    \item \textbf{Vertices $V$:} Each vertex $v \in V$ corresponds to a unique possible placement of a single big tile on the board $R$.
    \item \textbf{Edges $E$:} An edge $(v_1, v_2) \in E$ exists between two vertices $v_1$ and $v_2$ if and only if their corresponding tile placements on $R$ overlap.
\end{itemize}
The set of non-overlapping big tiles used in a tiling $T$
forms an \defin{independent set} $I$ in $G$. 
Conversely, every independent set $I$ in $G$ corresponds to a unique partial tiling of the board by non-overlapping big tiles. 

The tiling polynomial $P_{\mathcal{T}}(t)$ is therefore 
equal to the independence polynomial $I(G, t)$ 
of the conflict graph $G$.
\end{proof}
For further background on independence polynomials, see the survey by Levit and Mandrescu~\cite{LevitMandrescu2005}.

Note that $P_{\mu,d,n}(t)$ is an independence polynomial
even when $d \geq \mu_0$, but one must add auxiliary
edges (encoding the restrictions in \eqref{anchorwordinequalites}) to the conflict graph compared to
the tiling interpretation in Proposition~\ref{anchortotiling}.

\begin{example}
The following graph is the conflict graph corresponding 
to tilings of the $3{\times}7$ board with the $L$-triomino. 
\begin{center}
\begin{tikzpicture}[scale=1.3, every node/.style={circle, draw, minimum size=6mm, inner sep=0pt, font=\small}]
\foreach \i in {1,...,6} {
  \pgfmathtruncatemacro{\topnum}{2*\i - 1}
  \pgfmathtruncatemacro{\botnum}{2*\i}
  \node (t\i) at (\i, 1) {\topnum}; 
  \node (b\i) at (\i, 0) {\botnum}; 
}
\foreach \i in {1,...,5} {
  \pgfmathtruncatemacro{\j}{\i + 1}
  \draw (t\i) -- (t\j); 
  \draw (b\i) -- (b\j); 
}
\foreach \i in {1,...,6} {
  \draw (t\i) -- (b\i);
}
\foreach \i in {1,...,5} {
  \pgfmathtruncatemacro{\j}{\i + 1}
  \draw[dashed, thick] (t\i) -- (b\j);
}
\end{tikzpicture}
\end{center}
\end{example}

A celebrated result by Chudnovsky and Seymour~\cite{Chudnovsky2007} states that the independence polynomial of a \emph{claw-free graph} has only real zeros.
However, the conflict graphs arising from tilings are in general not claw-free.
The following lemma gives a sufficient condition for the presence of a claw.

\begin{lemma}\label{lem:clawBound}
Let $\mu = (\mu_0,\mu_1,\dotsc,\mu_\ell)$ with $\ell \geq 2$.
If $d \geq \mu_{\ell-1}+1$, then the conflict graph contains a claw.
\end{lemma}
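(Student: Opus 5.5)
The plan is to exhibit an explicit induced claw $K_{1,3}$ in the conflict graph. We need a central placement $C$ and three placements $X_1,X_2,X_3$ such that each $X_i$ conflicts with $C$ and no two $X_i$ conflict with each other. A placement is described by its anchor (column $c$, row $r$) with $1\le r\le d$; it occupies column $c+k$ in rows $r,\dots,r+\mu_k-1$. Two placements are adjacent exactly when they overlap or violate \eqref{anchorwordinequalites}; two anchors in the same column are always adjacent. I will work in the regime $d\le\mu_0$ of Proposition~\ref{anchortotiling}. If $d>\mu_0$, the auxiliary edges only add adjacencies, so I must then also recheck that the leaves remain pairwise non-adjacent.

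The hypothesis $d\ge \mu_{\ell-1}+1$ says that one anchor can sit strictly above the entire column $\ell-1$ of a tile anchored in row $1$. This suggests the following first three vertices.
\begin{itemize}
\item The centre $C$ is anchored at column $1$, row $1$.
\item The leaf $X_1$ is anchored at column $2-\ell$, row $1$. Its column of index $\ell-1$ is column $1$, in rows $1,\dots,\mu_{\ell-1}$, so it meets $C$.
\item The leaf $X_3$ is anchored at column $1$, row $d$. It shares a column with $C$, hence is adjacent to $C$.
\end{itemize}
Next I check that $X_1$ and $X_3$ are non-adjacent. In column $1$, $X_1$ uses rows up to $\mu_{\ell-1}<d$. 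In column $2$, $X_1$ uses rows up to $\mu_\ell\le\mu_{\ell-1}<d$. The only anchor inequality relating them is $d\ge 1+\mu_{\ell-1}$, which holds by hypothesis. So $X_1,X_3$ is an independent pair of neighbours of $C$, and the hypothesis $d\ge\mu_{\ell-1}+1$ is used exactly here.

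The main obstacle is the third leaf $X_2$. The naive choice anchors $X_2$ at column $\ell+1$, row $1$; it meets $C$ in its last column and lies more than $\ell$ columns from $X_1$, which uses $\ell\ge2$. However, $X_2$'s first column has height $\mu_0\ge d$, so it collides with $X_3$'s column $\ell$ part. More generally, any leaf to the right of column $1$ that reaches the low rows of $C$ is blocked by the tall first column. I would therefore first try to fix this by re-choosing the rows, for instance taking $C$ in row $d$ and placing the remaining leaves \emph{below}. Another option is to place $X_3$ at the right end of $C$: anchor it at column $\ell$ in a row $r>\mu_{\ell-1}$, so that it meets $C$ only through the anchor inequality $a_{1+(\ell-1)}\ge a_1+\mu_{\ell-1}$ or through $C$'s column $\ell+1$. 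A left leaf of shape $X_1$ and a leaf in the same column as $C$ then become the two remaining candidates.

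Concretely, I would tabulate, for $\ell=2$ and small $\mu$ with $d=\mu_1+1$, the neighbourhood of a fixed $C$ and search for three pairwise non-adjacent neighbours. I would then read off the general pattern in terms of the columns $0$, $\ell-1$ and $\ell$ of the tile, and verify it with the same three-way overlap and anchor-inequality checks as above. Throughout, the inequalities $\mu_\ell\le\mu_{\ell-1}<d\le\mu_0$ do all the work.
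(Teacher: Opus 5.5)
Your proposal has a genuine gap: it never produces the claw. After the independent pair $X_1,X_3$ of neighbours of $C$, the third leaf is deferred to a proposed search, so the argument stops at its only nontrivial step.

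Worse, the configuration you committed to cannot be completed in general. A neighbour $X_2$ of $C$ must be anchored in a column $c$ with $|c-1|\leq\ell$.
\begin{itemize}
\item If $c\geq 1$, then $X_2$ conflicts with $X_3$. Either it shares $X_3$'s column, or it sits at offset $j=c-1\geq 1$ to the right of $X_3$, which would need row $\geq d+\mu_j>d$.
\item So $c\leq 0$. Column $2-\ell$ is that of $X_1$, and column $1-\ell$ (just left of it) would need $1\geq r+\mu_1$; both are excluded.
\item For $3-\ell\leq c\leq 0$, write $j=1-c$. Avoiding both $X_3$ and $X_1$ requires $1+\mu_{\ell-1-j}\leq r\leq d-\mu_j$.
\end{itemize}
Hence you would need $d\geq 1+\mu_j+\mu_{\ell-1-j}$ for some $1\leq j\leq \ell-2$. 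This does not follow from $d\geq\mu_{\ell-1}+1$ (take $\mu=(3,2,2,1)$, $d=3$), and for $\ell=2$ no admissible column exists at all.

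Your ``right end'' fallback fails as well. With $C$ at column $1$, row $1$, a tile anchored at column $\ell$ in a row $r>\mu_{\ell-1}$ satisfies $r\geq 1+\mu_{\ell-1}$. It also clears column $\ell+1$, where $C$ has height $\mu_\ell\leq\mu_{\ell-1}$. So it is \emph{not} a neighbour of $C$.

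That last observation is in fact the key gadget. It should be used relative to a leaf, with the centre placed strictly between two leaves rather than sharing a column with one.
\begin{itemize}
\item Anchor the centre at column $\ell+1$, row $1$.
\item Anchor two leaves at column $1$, row $1$ and at column $2\ell+1$, row $1$. Each is at offset exactly $\ell$ from the centre, so $1\geq 1+\mu_\ell$ fails and each conflicts with it. The two leaves are $2\ell$ apart, hence independent.
\item Anchor the third leaf at column $\ell$, row $\mu_{\ell-1}+1\leq d$; this is where the hypothesis enters.
\end{itemize}
The third leaf is independent of the leaf at column $1$: relative to it, this placement is exactly your gadget, and it lies in a different column since $\ell\geq 2$. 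It conflicts with the centre, because it sits one column to its left and $1\geq \mu_{\ell-1}+1+\mu_1$ fails. It is independent of the leaf at column $2\ell+1$, being at offset $\ell+1$ from it. This is exactly the paper's construction.
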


\begin{proof}
We exhibit four tile placements whose anchor squares
$u,v_1,v_2,v_3$ form a claw, i.e., $u$ is adjacent to each $v_i$
but the $v_i$ are pairwise non-adjacent.
Set $h=\mu_{\ell-1}$ and place the tiles with anchor squares at
\[
u=(\ell{+}1,\,1),\quad v_1=(1,\,1),\quad v_2=(\ell,\,h{+}1),\quad v_3=(2\ell{+}1,\,1).
\]
\begin{center}
\begin{tikzpicture}[scale=0.35,baseline=(current bounding box.center)]
\draw[fill=skyblue,draw=black,line width=0.8pt]
  (0,0) -- (0,5) -- (1,5) -- (1,4) -- (2,4) -- (2,3) -- (3,3) -- (3,2) -- (4,2) -- (4,0) -- cycle;
\draw[fill=softgreen,draw=black,line width=0.8pt]
  (2,3) -- (2,8) -- (3,8) -- (3,7) -- (4,7) -- (4,6) -- (5,6) -- (5,5) -- (6,5) -- (6,3) -- cycle;
\draw[fill=lavender,draw=black,line width=0.8pt]
  (6,0) -- (6,5) -- (7,5) -- (7,4) -- (8,4) -- (8,3) -- (9,3) -- (9,2) -- (10,2) -- (10,0) -- cycle;
\draw[draw=blue,line width=1.2pt,dashed]
  (3,0) -- (3,5) -- (4,5) -- (4,4) -- (5,4) -- (5,3) -- (6,3) -- (6,2) -- (7,2) -- (7,0) -- cycle;
\fill (0.5,0.5) circle[radius=3pt]; \node[font=\small,below right] at (0.5,0.20) {$v_1$};
\fill (3.5,0.5) circle[radius=3pt]; \node[font=\small,below right] at (3.5,0.20) {$u$};
\fill (2.5,3.5) circle[radius=3pt]; \node[font=\small,above right] at (2.5,3.5) {$v_2$};
\fill (6.5,0.5) circle[radius=3pt]; \node[font=\small,below right] at (6.5,0.20) {$v_3$};
\end{tikzpicture}
\end{center}
The anchor square of $v_2$ is in row $h{+}1 \leq d$, so all four tiles fit on the board.
It is straightforward to verify that the tile at~$u$ overlaps each tile at~$v_i$,
while the tiles at $v_1,v_2,v_3$ are pairwise non-overlapping.
\end{proof}

In the tiling setting we require $d \leq \mu_0$
(see Proposition~\ref{anchortotiling}).
Combined with $d \geq \mu_{\ell-1}+1$, this forces $\mu_0 > \mu_{\ell-1}$,
so in particular $\mu$ must be non-rectangular for the conflict graph to contain a claw.

\begin{problem}
Classify Ferrers tiles $\mu$ and parameters $d$ for which the
conflict graph is claw-free.
\end{problem}

\subsection{Two-column Ferrers tiles}\label{sec:twoColumn}

As noted in \eqref{anchorignorefirstcolumn}, it is enough to study the case $\mu=(k,k)$ for $k\geq 1$.

\begin{lemma}\label{lem:clawFree}
 For $d \leq k$, the conflict graph for $\mu=(k,k)$ is claw-free.
\end{lemma}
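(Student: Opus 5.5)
The approach is to determine the conflict graph of $\mu=(k,k)$ explicitly when $d\le k$, and then rule out a claw by pigeonhole.

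\textbf{Step 1: the vertices.} The big tile is a $k\times 2$ rectangle and the board has height $k+d-1$. A placement is therefore determined by its anchor square $(i,a)$, where $i$ is the column, with $1\le i\le n-1$, and $a\in\{1,\dots,d\}$ is the row. This matches the anchor-word description of Proposition~\ref{anchortotiling}.

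\textbf{Step 2: when two placements overlap.} Take two placements with anchors $(i,a)$ and $(j,b)$. They overlap exactly when their column sets $\{i,i+1\}$ and $\{j,j+1\}$ meet and their row intervals $[a,a+k-1]$ and $[b,b+k-1]$ meet. The column condition is $|i-j|\le 1$. The row condition is $|a-b|\le k-1$. Since $a,b\in\{1,\dots,d\}$, we always have $|a-b|\le d-1\le k-1$, so the row condition holds automatically. Hence distinct vertices are adjacent if and only if their anchor columns differ by at most one. In words, each column class is a clique, and consecutive column classes are completely joined. No auxiliary edges are needed, because $d\le k\le\mu_0$ puts us in the tiling setting.

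\textbf{Step 3: no claw.} Suppose $u$ has anchor column $c$ and $v_1,v_2,v_3$ are neighbours of $u$. By Step~2, each $v_i$ has anchor column in $\{c-1,c,c+1\}$. If the $v_i$ were pairwise non-adjacent, Step~2 would force their anchor columns to differ pairwise by at least $2$. But $\{c-1,c,c+1\}$ contains at most two columns at mutual distance $\ge 2$, namely $c-1$ and $c+1$. So at least two of the $v_i$ lie in columns at distance at most one and are therefore adjacent, a contradiction.

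The only step needing care is Step~2, specifically that the hypothesis $d\le k$ makes the row constraint vacuous. This is where the lemma uses its assumption, and it contrasts with Lemma~\ref{lem:clawBound}, where taller row offsets produce non-adjacent placements in the same or neighbouring columns. Once the adjacency rule is established, the pigeonhole argument is immediate.
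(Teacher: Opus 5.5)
Your proposal is correct and follows essentially the same route as the paper: you use $d\le k$ to make the row-overlap condition automatic, so adjacency reduces to anchor columns differing by at most one. You then rule out three pairwise non-adjacent neighbours in $\{c-1,c,c+1\}$ by the same observation that only the pair $\{c-1,c+1\}$ is at distance at least $2$.
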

\begin{proof}
Recall that a vertex $(i,r)$ corresponds to placing a $k\times 2$ tile with anchor square 
in column $i$ at row $r$, where $1\leq r\leq d$.
Two vertices $(i,r)$ and $(j,r')$ are adjacent in the conflict graph if and only if
their tile placements overlap, which requires $|i-j|\leq 1$ (column intervals overlap)
and $|r-r'|<k$ (row intervals overlap).

Since $d\leq k$, all anchor rows satisfy $r,r'\in\{1,\ldots,d\}\subseteq\{1,\ldots,k\}$,
so $|r-r'|\leq d-1\leq k-1<k$ automatically.
Hence, for any two placements with $d\leq k$, they are adjacent \emph{if and only if} their column
intervals overlap, i.e., $|i-j|\leq 1$.

Now suppose $u=(i,r)$ is the center of a claw with leaves $v_1,v_2,v_3$.
Each $v_p=(j_p,r_p)$ is a neighbor of $u$, so $j_p\in\{i-1,i,i+1\}$.
Two leaves $v_p$ and $v_q$ are non-adjacent if and only if $|j_p-j_q|\geq 2$.
Among three values in the set $\{i-1,i,i+1\}$, at most one pair can have distance $\geq 2$,
namely the pair $\{i-1,i+1\}$.
Therefore, at most one pair among $v_1,v_2,v_3$ can be non-adjacent,
so the three leaves cannot be pairwise non-adjacent.
This contradiction shows that no claw exists.
\end{proof}

\begin{theorem}\label{thm:twoColumnRealRooted}
For $d\leq k$ set $P_n(t) \coloneqq P_{(k,k),d,n} (t)$.
Then the generating function for the $P_0,P_1,P_2, \dots$ is
\begin{equation}
\gF_{(k,k),d} (x,t) = \frac{1}{1-(x+d\cdot tx^2)}.
\end{equation}
Moreover, we have the recursion 
\begin{equation} \label{recursionrectangles}
P_n (t)=P_{n-1} (t) + d\cdot t \cdot P_{n-2} (t)
\iff 
\begin{bmatrix}
P_{n-1} \\
P_{n}
\end{bmatrix}
=
\begin{bmatrix}
0 & 1 \\
dt & 1
\end{bmatrix}
\begin{bmatrix}
P_{n-2} \\
P_{n-1}
\end{bmatrix}
\end{equation}
and all the $P_n (t)$ have only real zeros.
\end{theorem}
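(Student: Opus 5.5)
We first derive the recursion from the anchor-word description and then read off the generating function. For $\mu=(k,k)$ we have $\ell=1$, so an anchor word in $\anchor_{(k,k)}(d,n)$ is a word in $\{1,\dots,d,\infty\}$ with $a_n=\infty$ and $a_i\neq\infty \implies a_{i+1}\geq a_i+k$. Since $a_i\geq 1$ and $d\leq k$, the value $a_i+k\geq k+1>d$, so the condition forces $a_{i+1}=\infty$. Thus anchor words are exactly the words in which every finite letter is immediately followed by $\infty$ (and the last letter is $\infty$).

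Now we decompose according to the first letter. If $a_1=\infty$, the remainder is an arbitrary anchor word of length $n-1$; this contributes $P_{n-1}$. Otherwise $a_1\in\{1,\dots,d\}$ ($d$ choices, weight $t$), so $a_2=\infty$, and the remainder is an arbitrary anchor word of length $n-2$; this contributes $d\,t\,P_{n-2}$. The initial conditions are $P_0=1$ (empty word) and $P_1=1$. This gives \eqref{recursionrectangles}. Equivalently, anchor words are sequences of blocks $\infty$ (weight $x$) and $j\infty$ (weight $tx^2$, $d$ choices), which yields $\gF_{(k,k),d}(x,t)=1/(1-x-dtx^2)$ directly. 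The matrix form is just a restatement of the recursion.

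For real-rootedness there are two routes.

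\emph{Route via claw-freeness.} The fastest argument uses the earlier results. By the lemma identifying tiling polynomials with independence polynomials, $P_n$ is the independence polynomial of the conflict graph. By Lemma~\ref{lem:clawFree} this graph is claw-free when $d\leq k$. The Chudnovsky--Seymour theorem then gives that $P_n$ is real-rooted. One small point to check is that the graph-theoretic model matches the anchor-word count. The board has height $k+d-1$, so anchors sit in rows $1,\dots,d$, and Proposition~\ref{anchortotiling} applies since $d\leq k=\mu_0$.

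\emph{Self-contained route.} As a backup, one can apply Theorem~\ref{thm:ftof} to the matrix $\left[\begin{smallmatrix}0&1\\dt&1\end{smallmatrix}\right]$, which has nonnegative entries. Its only $2\times 2$ submatrix is the matrix itself, and the condition reads
\[
(\lambda t+\mu)\cdot 1+1 \interl (\lambda t+\mu)\cdot 0+dt,
\]
that is, $\lambda t+\mu+1\interl dt$. Both sides are linear with positive leading coefficients. Their roots satisfy $-(\mu+1)/\lambda<0$, so the first polynomial alternates left of the second, and the condition holds. Starting from $(P_0,P_1)=(1,1)$, which is trivially an interlacing pair, induction shows $(P_{n-1},P_n)\in\mathscr{F}_2^+$ for all $n$, so every $P_n$ is real-rooted and consecutive $P_n$ interlace.

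The main obstacle is only bookkeeping: carefully justifying that the constraint with $d\leq k$ collapses to ``finite letter followed by $\infty$'', and making sure the interleaving direction in Theorem~\ref{thm:ftof} is applied with the correct orientation.
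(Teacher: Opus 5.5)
Your proposal is correct. The recursion and generating function come from the same block decomposition the paper uses (blocks $\infty$ and $j\,\infty$). You also supply the justification $a_i+k\geq k+1>d$ for why every finite letter must be followed by $\infty$, which the paper's proof leaves as an ``observation''. Your backup route through Theorem~\ref{thm:ftof} is exactly the paper's argument, with the correct orientation: the single check $\lambda t+\mu+1\interl dt$ plus the initial pair $(1,1)$.

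Your primary route is genuinely different: you combine the conflict-graph interpretation with Lemma~\ref{lem:clawFree} and the Chudnovsky--Seymour theorem. This is sound. For $d\leq k$ the hypothesis $d\leq\mu_0$ of Proposition~\ref{anchortotiling} holds. Any two placements in equal or adjacent columns overlap, so the conflict graph is the path on the $n-1$ possible anchor columns with each vertex replaced by a $d$-clique. Its independence polynomial is therefore $P_n$, and it is claw-free.

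The trade-off is this. Your route rests on a deep external theorem and gives only real-rootedness of each $P_n$ separately. The transfer-matrix argument is elementary given Br\"and\'en's characterization, and it yields the stronger statement that $(P_{n-1},P_n)$ is an interlacing pair for every $n$. Incidentally, the explicit structure of the graph shows that $P_n(t)$ is the independence polynomial of a path on $n-1$ vertices evaluated at $dt$, so even Chudnovsky--Seymour is more than you need.
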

\begin{proof}
The generating function identity $\gF_{(k,k),d}(x,t)=1/(1-(x+dtx^2))$ follows from the
observation that every anchor word is built by appending either an empty column (contributing $x$)
or a single big tile in one of the $d$ available rows (contributing $dtx^2$, with the mandatory trailing $\infty$).
The recursion $P_n(t)=P_{n-1}(t)+dt\cdot P_{n-2}(t)$ is immediate.

For real-rootedness, we use Theorem~\ref{thm:ftof}.
The initial pair $(P_0,P_1)=(1,1)$ consists of positive constants,
so it is vacuously interlacing.
It remains to check that the transfer matrix in \eqref{recursionrectangles}
preserves interlacing pairs.
Its only $2{\times}2$ submatrix is the full matrix
\[
\begin{bmatrix}
0 & 1\\
dt & 1
\end{bmatrix}.
\]
Thus, for all $\lambda,\mu>0$, the condition in Theorem~\ref{thm:ftof}
becomes
\[
(\lambda t+\mu)\cdot 1 + 1
\interl
(\lambda t+\mu)\cdot 0 + dt,
\]
or equivalently
\[
\lambda t+\mu+1 \interl dt.
\]
Both polynomials have nonnegative coefficients and real zeros,
with zeros $-(\mu+1)/\lambda$ and $0$, respectively; hence the required
interlacing holds. Therefore $(P_{n-1},P_n)$ is interlacing for every~$n$,
and in particular each $P_n(t)$ is real-rooted.
\end{proof}

We remark that the previous theorem also follows from \cite[Thm. 4.5]{BrandenLeite2024x}.
The generating function for $P_0(t),P_1(t),\dotsc$ is 
\begin{equation}
\frac{1/d}{\tfrac{1}{d}(1-x) - t x^2}.
\end{equation}
This is (up to the factor $\tfrac1d$) of the form 
\[
  \frac{1}{Q(x) - t x^r} \qquad Q(0) >0, \quad r \in \{1,2,\dotsc\}
\]
and $Q(x)$ is real-rooted.  Bränden--Leite implies that the polynomials 
$P_j(t)$ are all real-rooted and satisfy $P_j \interl P_{j+1}$.

\begin{definition}[Unimodality and log-concavity]
Let $p(t) = a_0 + a_1 t + \cdots + a_n t^n$ be a polynomial with nonnegative coefficients.
We say that $p(t)$ is \defin{unimodal} 
if there exists an index $k$ such that
\[
a_0 \leq a_1 \leq \cdots \leq a_k \geq a_{k+1} \geq \cdots \geq a_n.
\]
We say that $p(t)$ is \defin{log-concave} if 
\[
a_i^2 \geq a_{i-1} \cdot a_{i+1} \quad \text{for all } i = 1, 2, \ldots, n-1.
\]
It is well-known that if a polynomial with nonnegative coefficients is real-rooted (all zeros are real), then it is log-concave, and if it is log-concave, then it is unimodal. 
However, the converses do not hold in general.
\end{definition}
For further background on these notions and their role in algebraic combinatorics, see Stanley's survey~\cite{Stanley1989} and Br\"and\'en's notes~\cite{branden2015unimodality}.

In particular, we emphasize that the independence 
polynomials for a general graph might not even be unimodal, see \cite{AlaviMaldeSchwenkErdos1987}.

\subsection{Fault-free tilings and fault-free words}\label{sec:faultfree}

\begin{definition}
We say that a tiling is \defin{fault-free} if it does not have a vertical fault line. Similarly, 
we say that an anchor word is \defin{fault-free} if it 
corresponds to a fault-free tiling, as in Proposition~\ref{anchortotiling}. 
For example, the tiling below is a concatenation of four fault-free tilings separated by the 3 fault lines. 
\begin{center}
\begin{tikzpicture}[scale=0.4,baseline=(current bounding box.center)]
\draw[step=1cm, thin] (0,0) grid (14,5);
\begin{scope}
\drawbigLoutline{0}{0}
\drawbigLoutline{2}{1}
\drawbigLoutline{3}{2}
\end{scope}
\begin{scope}[xshift=7cm, step=1cm] 
\drawbigLoutline{0}{0}
\drawbigLoutline{1}{1}
\end{scope}
\begin{scope}[xshift=11cm, step=1cm] 
\drawbigLoutline{0}{2}
\end{scope}
\draw[dash pattern=on 4pt off 2pt, red, line width=2pt] (6,-1) -- (6,7);
\draw[dash pattern=on 4pt off 2pt, red, line width=2pt] (7,-1) -- (7,7);
\draw[dash pattern=on 4pt off 2pt, red, line width=2pt] (11,-1) -- (11,7);
\end{tikzpicture}
\end{center}
This tiling has the following anchor word that has been split into fault-free words:
\[
1,\infty, 2,3,\infty,\infty \mid\infty\mid 1,2,\infty, \infty\mid 3, \infty,\infty.
\]

An anchor word for $\mu = \mu_0, \dots, \mu_{\ell}$ is fault-free if it is either equal to ($\infty$) or starts with an integer and ends with $\ell$ consecutive $\infty$, where this is the only instance of the substring $\underbrace{\infty \dots \infty}_\ell$.

We let \defin{$\faultfree_\mu (d)$} be the set of fault-free anchor words for $\mu$ and $d$ (of any length).
\end{definition}

From the point of view of anchor words, we may think of the first column as being sufficiently large compared to $d$.
Its role is only to ensure that no two tiles can have anchor squares in the same column, while the non-rectangular shape is encoded by the inequalities in \eqref{anchorwordinequalites}.

\begin{lemma} \label{recurrence-length}
Suppose $\mu$ has $\ell +1$ columns. The longest fault-free word in $\faultfree_\mu (d)$ has length given by
\begin{equation} \label{eq:recursionlength}
   \left\lfloor \frac{d-1}{\mu_{\ell}} \right \rfloor \cdot \ell + \ell +1.
\end{equation}
In particular, $\faultfree_\mu (d)$ is finite. 
\end{lemma}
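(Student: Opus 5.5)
The plan is to translate the fault-free condition into a statement about the positions of the integer letters, bound those positions, and then exhibit a word attaining the bound. We assume $\ell\geq 1$ (the substring condition is only meaningful then), and we use the convention that $\infty\geq a$ for every integer $a$, so an $\infty$ never violates \eqref{anchorwordinequalites}.

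\textbf{Structure of a fault-free word.} Let $w\in\faultfree_\mu(d)$ with $w\neq(\infty)$, which has length $1$ and is trivially within the bound. Let $p_1<p_2<\dots<p_m$ be the positions of the integer letters. By definition $p_1=1$. The word ends in exactly $\ell$ letters $\infty$, so $p_m$ equals the length minus $\ell$. Moreover, $\infty^\ell$ occurs only at the very end, so every gap between consecutive integer letters contains at most $\ell-1$ letters $\infty$. Hence
\[
p_{s+1}-p_s\leq \ell \quad\text{for } s=1,\dots,m-1 .
\]
Since $1\leq p_{s+1}-p_s\leq \ell$, inequality \eqref{anchorwordinequalites} applies with $k=p_{s+1}-p_s$. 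Because $\mu$ is a partition, $\mu_k\geq\mu_\ell$, and therefore
\[
a_{p_{s+1}}\geq a_{p_s}+\mu_\ell .
\]

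\textbf{Upper bound.} Chaining these inequalities gives $a_{p_m}\geq a_{p_1}+(m-1)\mu_\ell\geq 1+(m-1)\mu_\ell$. Since $a_{p_m}\leq d$, we get $m-1\leq\lfloor (d-1)/\mu_\ell\rfloor$. Chaining the gap bounds gives $p_m\leq 1+(m-1)\ell$. So the length $p_m+\ell$ is at most
\[
\left\lfloor \frac{d-1}{\mu_\ell}\right\rfloor\ell+\ell+1 .
\]
This length bound, together with the finite alphabet, gives finiteness of $\faultfree_\mu(d)$ immediately.

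\textbf{Attaining the bound.} Set $q=\lfloor (d-1)/\mu_\ell\rfloor$. Place integer letters at positions $1+s\ell$ with values $1+s\mu_\ell$ for $s=0,\dots,q$; these values are at most $d$ by the choice of $q$. Fill every other position with $\infty$, and append $\ell$ trailing letters $\infty$. We then check three things.
\begin{itemize}
\item Condition \eqref{anchorwordinequalites} holds: for an integer at position $i$, the letters $a_{i+k}$ with $1\leq k<\ell$ are $\infty$, and $a_{i+\ell}$ is either $\infty$ or exceeds $a_i$ by exactly $\mu_\ell$.
\item The last $\ell$ letters are $\infty$, as required of any anchor word.
\item The word is fault-free: it starts with an integer, each internal gap has exactly $\ell-1$ letters $\infty$, and so $\infty^\ell$ occurs only at the end.
\end{itemize}
The length is $1+q\ell+\ell$, which matches the bound.

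The main point needing care is the reduction from the fault-free definition to the gap bound $p_{s+1}-p_s\leq\ell$. In particular, one must check that this combinatorial definition is what makes the constraint with $k\leq\ell$ apply between every consecutive pair of integer letters. After that, the argument is a straightforward chaining of inequalities.
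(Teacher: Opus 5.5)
Your proof is correct and takes the same route as the paper: the number of tiles is bounded because consecutive anchor rows in a fault-free word rise by at least $\mu_\ell$, the width is bounded because consecutive anchors are at most $\ell$ columns apart, and the staircase configuration attains the bound. The paper only works out the rectangular case $\mu=(h,\dots,h)$, using a picture, and says general $\mu$ is analogous; you treat general $\mu$ directly via $\mu_k\geq\mu_\ell$ for $k\leq\ell$ and check the extremal word explicitly, which makes the argument complete.
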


\begin{proof}
The case $d=1$ is easy and there is only one fault-free tile.

We first consider the case $\mu=(h,h,h,\dots, h)$. The following tiling illustrates the maximum length word of a word in $\faultfree_\mu (d)$.
\begin{center}
\begin{tikzpicture}[scale=0.4,baseline=(current bounding box.center)]
\draw[step=1cm, thin] (0,0) grid (21,10);
\drawRoutline{0}{0}
\drawRoutline{4}{2}
\drawRoutline{8}{4}
\drawRoutline{12}{6}
\drawRoutline{16}{8}
\node at (-1, 0.5) {$1$};
\node at (-1, 1.5) {$h$};
\node at (-1, 3.25) {$\vdots$};
\node at (-1, 7.25) {$\vdots$};
\node at (-1, 4.25) {$\vdots$};
\node at (-1, 5.25) {$\vdots$};
\node at (-1, 6.25) {$\vdots$};
\node at (-1, 8.5) {$d$};
\draw [decorate, decoration={brace, mirror, amplitude=5pt}] 
(0,-0.5) -- (5,-0.5) node[midway, below=6pt] {$\ell +1$};
\draw [decorate, decoration={brace, mirror, amplitude=5pt}] 
(17,-0.5) -- (21,-0.5) node[midway, below=6pt] {$\ell$};
\draw [decorate, decoration={brace, mirror, amplitude=5pt}] 
(9,-0.5) -- (12,-0.5) node[midway, below=6pt] {$\ell -1$};
\end{tikzpicture}
\end{center}

As in the proof of Lemma~\ref{lem:anchor}, the anchor squares must appear in increasing order. If $k$ is the total number of big tiles used, then $1+(k-1)h \leq d$. Moreover, the total width is $k\cdot \ell + 1$ (recall that the big tile has width $\ell + 1$). From these two observations the expression in \eqref{eq:recursionlength} follows. The same argument can be made for general $\mu$.

\end{proof}

Every anchor word can be expressed as a concatenation of some fault-free words. In other words, the fault-free words serve as building blocks for general tilings (as long as $d \leq \mu_0$, i.e.\ the board has height $\mu_0+d-1 \leq 2\mu_0 -1$). Given a fault-free word $w \in \faultfree_\mu (d)$ let \defin{$\btw(w)$} be the length of the anchor word.

We let the generating polynomial for the set of fault-free words in $\faultfree_\mu (d)$ be:
\begin{equation}
\defin{\gB_{\mu,d}(x,t)} \coloneqq \sum_{w\in \faultfree_\mu (d)} x^{\btw(w)} \cdot t^{\bt(w)}.
\end{equation}
\begin{theorem}\label{thm:genfuncformula}
The generating function for tilings with a Ferrers tile $\mu$ is
\begin{equation} \label{genfuncformula}
\gF_{\mu,d} (x,t) = \frac{1}{1- \gB_{\mu,d}(x,t)}.
\end{equation}
In particular, the sequence of polynomials $\left\{P_{\mu,d,n}(t)\right\}_{n\geq 0}$ in \eqref{eq:tilinggf} satisfies a linear recursion of length at most $\ell \cdot d +1$, with coefficients in $\mathbb{N}[t]$:
\begin{equation}
P_n=P_{n-1}+ \cdots + \alpha_j (t) P_{n-j}+ \cdots
\end{equation}
where $\alpha_j (t)$ is the sum of fault-free words of length $j$, weighted by $t^{\bt(w)}$.
\end{theorem}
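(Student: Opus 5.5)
The plan is to prove \eqref{genfuncformula} by a unique factorization of anchor words into fault-free words, and then read off the recursion by extracting coefficients.

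\textbf{Step 1: unique factorization.} I will show that concatenation gives a bijection
\[
\bigsqcup_{m\geq 0} \faultfree_\mu(d)^m \longrightarrow \bigsqcup_{n\geq 0}\anchor_\mu(d,n),
\]
under which width adds and $\bt$ adds.

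For the forward direction, take an anchor word $w$ and cut it at its vertical fault lines. In word language, I read $w$ from the left. If the first letter is $\infty$ and no tile crosses the line after column $1$, the first factor is $(\infty)$. Otherwise the first factor starts with an integer and ends at the first occurrence of $\ell$ consecutive $\infty$'s.

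The key observation is that a tile anchored at column $i$ occupies columns $i,\dots,i+\ell$. So a vertical line after column $j$ is crossed by no tile exactly when none of $a_{j-\ell+1},\dots,a_j$ is an integer, i.e.\ the last $\ell$ letters before the line are all $\infty$. For this reason the first run of $\ell$ $\infty$'s after an integer marks the first fault line. This is exactly the definition of a fault-free word, and the convention that a run is counted only once it follows an integer handles leading $\infty$'s.

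For the converse direction I must check two things. First, a concatenation of fault-free words is a valid anchor word: each factor ends in $\ell$ $\infty$'s, so the conditions \eqref{anchorwordinequalites} never link letters in different factors, and the trailing-$\infty$ requirement holds. Second, the factorization is unique, because the cut points are forced by the first-fault-line description.

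\textbf{Step 2: generating function.} From the bijection,
\[
\gF_{\mu,d}(x,t)=\sum_{m\geq0}\gB_{\mu,d}(x,t)^m=\frac{1}{1-\gB_{\mu,d}(x,t)}
\]
as formal power series. This is legitimate because every fault-free word has length at least $1$, so $\gB_{\mu,d}$ has no constant term.

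\textbf{Step 3: the recursion.} By Lemma~\ref{recurrence-length}, $\gB_{\mu,d}=\sum_{j=1}^{L}\alpha_j(t)x^j$ is a polynomial in $x$, where
\[
L=\lfloor (d-1)/\mu_\ell\rfloor\ell+\ell+1\leq (d-1)\ell+\ell+1=\ell d+1,
\]
using $\mu_\ell\geq1$. Clearing denominators in $\gF(1-\gB)=1$ and comparing coefficients of $x^n$ gives
\[
P_n=\sum_{j}\alpha_j(t)P_{n-j}\quad\text{for } n\geq1.
\]
The coefficients satisfy $\alpha_1=1$, coming from the single word $(\infty)$, and $\alpha_j\in\mathbb{N}[t]$ by definition.

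\textbf{Main obstacle.} The delicate point is Step~1, specifically that the fault-line decomposition matches the word-level definition. There are two issues to handle carefully. One is the degenerate case $\ell=0$, where the shape is a single column: every column is then its own factor, and $\faultfree_\mu(d)$ consists of $(\infty)$ and the single-integer words of length $1$. The other is confirming that a fault-free word containing an internal run of $\ell$ $\infty$'s genuinely has a fault there. I would handle both using the observation that a tile anchored at column $i$ occupies exactly columns $i,\dots,i+\ell$.
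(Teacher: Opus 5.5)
Your proposal is correct and is essentially the paper's argument. The paper simply cites the standard free-monoid (sequence) construction from Stanley (Prop.~4.7.11), applied to the unique factorization of anchor words into fault-free words; your Steps~1--3 supply exactly the details behind that citation, with the length bound $\ell d+1$ coming from Lemma~\ref{recurrence-length} as you say. The only nit: in the degenerate case $\ell=0$ the single-integer words are also fault-free of length $1$, so there $\alpha_1=1+dt$ rather than $1$.
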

\begin{proof}
This follows from standard theory of generating functions, see for example \cite[Prop. 4.7.11]{Stanley2011}.
\end{proof}

\begin{example}
We now compute $\gB_{\mu,d}(x,t)$ for the Ferrers tile $\mu=(3,1,1)$ and $d=3$. The board height is $\mu_0+d-1=5$, so this corresponds to finding all fault-free tilings on $5 \times n$ boards.
They are: 
\begin{center}
\begin{tikzpicture}[scale=0.4,baseline=(current bounding box.center)]
\draw[step=1cm, thin] (0,0) grid (1,5);
\draw[fill=softgreen,line width= 1pt] (0,0) rectangle (1,1);
\draw[fill=softgreen,line width= 1pt] (0,1) rectangle (1,2);
\draw[fill=softgreen,line width= 1pt] (0,2) rectangle (1,3);
\draw[fill=softgreen,line width= 1pt] (0,3) rectangle (1,4);
\draw[fill=softgreen,line width= 1pt] (0,4) rectangle (1,5);
\node at (0.5,-0.5) {$x$};
\end{tikzpicture}
\quad
\begin{tikzpicture}[scale=0.4,baseline=(current bounding box.center)]
\draw[step=1cm, thin] (0,0) grid (3,5);
\drawbigLoutline{0}{0}
\node at (1.5 ,-0.5) {$tx^3$};
\end{tikzpicture}
\quad
\begin{tikzpicture}[scale=0.4,baseline=(current bounding box.center)]
\draw[step=1cm, thin] (0,0) grid (3,5);
\drawbigLoutline{0}{1}
\node at (1.5 ,-0.5) {$tx^3$};
\end{tikzpicture}
\quad
\begin{tikzpicture}[scale=0.4,baseline=(current bounding box.center)]
\draw[step=1cm, thin] (0,0) grid (3,5);
\drawbigLoutline{0}{2}
\node at (1.5 ,-0.5) {$tx^3$};
\end{tikzpicture}

\begin{tikzpicture}[scale=0.4,baseline=(current bounding box.center)]
\draw[step=1cm, thin] (0,0) grid (4,5);
\drawbigLoutline{0}{0}
\drawbigLoutline{1}{1}
\node at (1.5 ,-0.5) {$t^2x^4$};
\end{tikzpicture}
\quad
\begin{tikzpicture}[scale=0.4,baseline=(current bounding box.center)]
\draw[step=1cm, thin] (0,0) grid (4,5);
\drawbigLoutline{0}{0}
\drawbigLoutline{1}{2}
\node at (1.5 ,-0.5) {$t^2x^4$};
\end{tikzpicture}
\quad
\begin{tikzpicture}[scale=0.4,baseline=(current bounding box.center)]
\draw[step=1cm, thin] (0,0) grid (4,5);
\drawbigLoutline{0}{1}
\drawbigLoutline{1}{2}
\node at (1.5 ,-0.5) {$t^2x^4$};
\end{tikzpicture}
\\
\begin{tikzpicture}[scale=0.4,baseline=(current bounding box.center)]
\draw[step=1cm, thin] (0,0) grid (5,5);
\drawbigLoutline{0}{0}
\drawbigLoutline{2}{1}
\node at (1.5 ,-0.5) {$t^2x^5$};
\end{tikzpicture}
\quad
\begin{tikzpicture}[scale=0.4,baseline=(current bounding box.center)]
\draw[step=1cm, thin] (0,0) grid (5,5);
\drawbigLoutline{0}{1}
\drawbigLoutline{2}{2}
\node at (1.5 ,-0.5) {$t^2x^5$};
\end{tikzpicture}
\quad
\begin{tikzpicture}[scale=0.4,baseline=(current bounding box.center)]
\draw[step=1cm, thin] (0,0) grid (5,5);
\drawbigLoutline{0}{0}
\drawbigLoutline{2}{2}
\node at (1.5 ,-0.5) {$t^2x^5$};
\end{tikzpicture}
\quad
\begin{tikzpicture}[scale=0.4,baseline=(current bounding box.center)]
\draw[step=1cm, thin] (0,0) grid (5,5);
\drawbigLoutline{0}{0}
\drawbigLoutline{1}{1}
\drawbigLoutline{2}{2}
\node at (1.5 ,-0.5) {$t^3x^5$};
\end{tikzpicture}
\\
\begin{tikzpicture}[scale=0.4,baseline=(current bounding box.center)]
\draw[step=1cm, thin] (0,0) grid (6,5);
\drawbigLoutline{0}{0}
\drawbigLoutline{1}{1}
\drawbigLoutline{3}{2}
\node at (1.5 ,-0.5) {$t^3x^6$};
\end{tikzpicture}
\quad
\begin{tikzpicture}[scale=0.4,baseline=(current bounding box.center)]
\draw[step=1cm, thin] (0,0) grid (6,5);
\drawbigLoutline{0}{0}
\drawbigLoutline{2}{1}
\drawbigLoutline{3}{2}
\node at (1.5 ,-0.5) {$t^3x^6$};
\end{tikzpicture}
\\
\begin{tikzpicture}[scale=0.4,baseline=(current bounding box.center)]
\draw[step=1cm, thin] (0,0) grid (7,5);
\drawbigLoutline{0}{0}
\drawbigLoutline{2}{1}
\drawbigLoutline{4}{2}
\node at (1.5 ,-0.5) {$t^3x^7$};
\end{tikzpicture}
\end{center}
Summing all the monomials gives us
\begin{equation*}
\gB_{311,3}(x,t)=(x+3tx^3+3t^2x^4+(3t^2+t^3)x^5+2t^3x^6+t^3x^7).
\end{equation*}
This implies that 
\begin{equation*}
P_n=P_{n-1}+3tP_{n-3}+3t^2P_{n-4}+(3t^2+t^3)P_{n-5}+2t^3P_{n-6}+t^3P_{n-7}.
\end{equation*}
The number of such tilings for $\mu=(3,1,1)$ and $d=3$ gives the sequence: 
\begin{equation}
    1, 1, 1, 4, 10, 20, 41, 90, 199, 431, 928, 2009, 4361,\dotsc.
\end{equation}
This does not appear in the OEIS.
We note that for $n=19$, we have that $P_{19}(t)$ equals 
\begin{align*}
&12 t^{11}+726 t^{10}+15115 t^9+100629 t^8+268452 t^7 \\
&+ 324611 t^6+194706 t^5+61890 t^4+10820 t^3+1038 t^2+51 t+1
\end{align*}
which has a pair of non-real roots $\approx -25.6921 \pm 10.2045 \mathbf{i}$.
\end{example}

\begin{example}[Failure of log-concavity]
Since real-rootedness fails, one can ask if weaker properties hold.
For $\mu=(9,1,1)$, $d=8$ and $n=9$, the polynomial is 
\[
P_{\mu,d,n} = 8 t^7+196 t^6+5208 t^5+9800 t^4+5440 t^3+948 t^2+56 t+1
\]
and this is not even log-concave, as $196^2 < 8\cdot 5208$.
\end{example}

\begin{theorem}\label{thm:genFunc}
    Let $\mu = (h,1,1,\dotsc,1)$ be a hook of width $\ell +1$. Then
    \begin{equation}\label{eq:BmudHook}
        \gB_{\mu,d}(x,t) =
        x+x^{\ell+1}\frac{\bigl(1+t(x+x^2+\dotsb+x^\ell)\bigr)^d-1}{x+x^2+\dotsb+x^\ell}.
    \end{equation}
    Equivalently,
    \begin{equation}\label{eq:BmudHookSum}
        \gB_{\mu,d}(x,t) = x+ \sum _{k=1}^d t^k \cdot \binom{d}{k} \cdot x^{\ell +1} \left(x+x^2+\dotsb + x^\ell \right) ^{k-1}.
    \end{equation}
In particular, for $\ell=1$ (i.e.\ $\mu=(h,1)$) and $d=h$, we have
\begin{equation}
\gF_{\mu,d}(x,t) = \frac{1}{1-x(1+xt)^d}.
\end{equation}
\end{theorem}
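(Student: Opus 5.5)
We prove \eqref{eq:BmudHookSum} by a direct combinatorial description of the fault-free anchor words for the hook $\mu=(h,1,\dotsc,1)$; the closed form \eqref{eq:BmudHook} then follows by summing the binomial expansion. For this $\mu$ we have $\mu_1=\dots=\mu_\ell=1$, so condition \eqref{anchorwordinequalites} says: if $a_i\neq\infty$, then every non-$\infty$ entry among $a_{i+1},\dots,a_{i+\ell}$ is strictly larger than $a_i$. Since each tile occupies only row $a_i$ in its columns $1,\dots,\ell$, a vertical fault line after column $c$ is blocked exactly when some tile has its anchor in a column $i\le c<i+\ell$.

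First we characterize fault-free words other than $(\infty)$. Let such a word have $k\ge 1$ integer entries at positions $p_1=1<p_2<\dots<p_k$ with values $r_1,\dots,r_k$.
\begin{itemize}
\item No fault line occurs inside the word exactly when consecutive anchors satisfy $1\le p_{j+1}-p_j\le \ell$.
\item The word ends after column $p_k+\ell$, so its width is $p_k+\ell$.
\item Since consecutive anchors are within distance $\ell$, the condition \eqref{anchorwordinequalites} forces $r_1<r_2<\dots<r_k$. Conversely, strict increase of consecutive values implies increase of all values, which gives the condition for every pair within distance $\ell$.
\item The trailing requirement that the block ending in $\ell$ infinities is the only such block is automatic from the gap condition.
\end{itemize}
Hence fault-free words with $k$ tiles are in bijection with pairs consisting of a $k$-subset $\{r_1<\dots<r_k\}\subseteq\{1,\dots,d\}$ and a sequence of gaps $g_1,\dots,g_{k-1}\in\{1,\dots,\ell\}$. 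The width is $1+\sum_j g_j+\ell$.

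Summing over these pairs, each contributes $t^k x^{\ell+1}\prod_j x^{g_j}$. This gives
\[
\gB_{\mu,d}(x,t)=x+\sum_{k=1}^d\binom dk t^k x^{\ell+1}\bigl(x+\dots+x^\ell\bigr)^{k-1},
\]
where the initial $x$ accounts for the word $(\infty)$. This is \eqref{eq:BmudHookSum}.

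Writing $S=x+\dots+x^\ell$, the sum equals
\[
\frac{x^{\ell+1}}{S}\sum_{k\ge1}\binom dk (tS)^k=\frac{x^{\ell+1}\bigl((1+tS)^d-1\bigr)}{S},
\]
which gives \eqref{eq:BmudHook}.

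For the special case $\ell=1$, we have $S=x$, so $\gB=x+x\bigl((1+tx)^d-1\bigr)=x(1+tx)^d$. Theorem~\ref{thm:genfuncformula} then yields $\gF=1/(1-x(1+xt)^d)$. The hypothesis $d\le h=\mu_0$ is needed to apply the concatenation decomposition of Theorem~\ref{thm:genfuncformula}.

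The main obstacle is the characterization step, in two parts. First, we must verify that the fault-line notion for tilings matches the gap condition $p_{j+1}-p_j\le\ell$. This requires checking that the tall first column of height $h$ never creates extra obstructions beyond column $i$ itself. Second, we must confirm that non-consecutive anchors impose no constraint beyond transitivity. Both points reduce to the fact that the arm of the hook lies in a single row.
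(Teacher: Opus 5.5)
Your proof is correct and follows the same route as the paper: choose the $k$-subset of anchor rows (forced to be strictly increasing), choose each of the $k-1$ horizontal offsets from $\{1,\dots,\ell\}$ to get the weight $x^{\ell+1}(x+\dots+x^\ell)^{k-1}$, sum the binomial series, and specialize to $\ell=1$ via Theorem~\ref{thm:genfuncformula}. You also supply two justifications that the paper leaves implicit: why fault-freeness is exactly the gap condition $p_{j+1}-p_j\le\ell$, and why anchors that are not consecutive impose no constraint beyond transitivity.
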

\begin{proof}
    The number of big tiles, $k$, in a fault-free tiling is between $1$ and $d$.
    We first choose which $k$ rows are occupied by an anchor square.
    The leftmost big tile touches the left-hand side. For each of the remaining $k-1$ tiles we need to decide the horizontal offset with respect to the previous tile. Each such offset has $\ell$ options and increases the width accordingly.
    This gives the sum formula \eqref{eq:BmudHookSum}. Writing
    \[
    S=x+x^2+\dotsb+x^\ell,
    \]
    we obtain
    \[
    \sum_{k=1}^d \binom{d}{k} t^k S^{k-1}=\frac{(1+tS)^d-1}{S},
    \]
    which yields \eqref{eq:BmudHook}. The special case $\ell=1$ gives
    \[
    \gB_{\mu,d}(x,t)=x(1+xt)^d,
    \]
    and the generating function follows from Theorem~\ref{thm:genfuncformula}.
\end{proof}
The following result from \cite[Thm.~2.3]{Liu2007} provides a useful criterion
for establishing real-rootedness via linear recurrences.

\begin{theorem}[Liu--Wang]\label{thm:liuwang}
Let $F$, $f$, $g_1,\dotsc,g_k$ be real polynomials satisfying:
\begin{enumerate}
\item[(a)] $F(t) = a(t)f(t) + b_1(t)g_1(t) + \cdots + b_k(t)g_k(t)$,
where $a(t), b_1(t),\dotsc,b_k(t)$ are real polynomials
and $\deg F \in \{\deg f,\, \deg f + 1\}$.
\item[(b)] $f$ and each $g_j$ are real-rooted, and $g_j \interl f$ for each~$j$.
\item[(c)] $F$ and $g_1,\dotsc,g_k$ have leading coefficients of the same sign.
\end{enumerate}
If $b_j(r) \leq 0$ for each~$j$ and each zero~$r$ of~$f$,
then $F$ is real-rooted and $f \interl F$.
\end{theorem}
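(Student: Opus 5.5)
The natural approach is to evaluate $F$ at the zeros of $f$ and count sign changes with the intermediate value theorem. Let $n=\deg f$, and let $\varepsilon\in\{\pm1\}$ be the common sign of the leading coefficients of $F,g_1,\dots,g_k$ from (c). I would first treat the generic case where $f$ has simple zeros $r_1<r_2<\dots<r_n$ and $F(r_i)\neq 0$ for all $i$. The degenerate cases are postponed to the end.

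\textbf{Step 1: signs of $g_j$ at the zeros of $f$.} I claim that
\[
\operatorname{sign} g_j(r_i)=(-1)^{n-i}\varepsilon \quad\text{or}\quad g_j(r_i)=0 .
\]
By (b), $g_j\interl f$, so there are two cases.
\begin{itemize}
\item If $g_j$ interlaces $f$, then $\deg g_j=n-1$ and exactly one zero of $g_j$ lies in each gap $[r_m,r_{m+1}]$. Hence exactly $n-i$ zeros of $g_j$ lie to the right of $r_i$ (counting boundary coincidences as zeros of $g_j$ at $r_i$).
\item If $g_j$ alternates left of $f$, then $\deg g_j=n$ and its zeros $s_1\le\dots\le s_n$ satisfy $s_m\le r_m\le s_{m+1}$. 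Again exactly $n-i$ zeros of $g_j$ exceed $r_i$.
\end{itemize}
In either case the claimed sign follows, because the sign of $g_j$ at a point is $\varepsilon$ times $(-1)$ raised to the number of zeros of $g_j$ to its right.

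\textbf{Step 2: signs of $F$ at the zeros of $f$.} By (a) and $f(r_i)=0$,
\[
F(r_i)=\sum_j b_j(r_i)\,g_j(r_i).
\]
Each $b_j(r_i)\le 0$, so every summand is either $0$ or has sign $-(-1)^{n-i}\varepsilon=(-1)^{n-i+1}\varepsilon$. Hence $F(r_i)$ has sign $(-1)^{n-i+1}\varepsilon$ whenever it is nonzero.

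\textbf{Step 3: locating the zeros of $F$.} The signs of $F(r_1),\dots,F(r_n)$ alternate, so $F$ has a zero in each open interval $(r_i,r_{i+1})$. This gives $n-1$ zeros.
\begin{itemize}
\item At $r_n$ we have $\operatorname{sign}F(r_n)=-\varepsilon$, while $F(t)\to\varepsilon\cdot\infty$ as $t\to+\infty$. So $F$ has a zero in $(r_n,\infty)$.
\item If $\deg F=n$, we now have $n$ real zeros with $r_1<s_1<r_2<\dots<r_n<s_n$. Thus $F$ is real-rooted and $f$ alternates left of $F$.
\item If $\deg F=n+1$, then as $t\to-\infty$ the sign of $F$ is $(-1)^{n+1}\varepsilon$. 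This is opposite to $\operatorname{sign}F(r_1)=(-1)^{n}\varepsilon$, so there is one more zero in $(-\infty,r_1)$. We get $n+1$ real zeros interlaced by the $r_i$, i.e.\ $f$ interlaces $F$.
\end{itemize}
In both cases $f\interl F$.

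\textbf{Step 4: degenerate cases (main obstacle).} Two situations break the strict sign argument: $f$ may have multiple zeros, or $F$ may vanish at some $r_i$. I would try two routes.
\begin{itemize}
\item \emph{Factoring out a common factor.} If $r$ is a zero of $f$ of multiplicity $m$, the interlacing $g_j\interl f$ forces $g_j$ to vanish at $r$ to order at least $m-1$. Hence $(t-r)^{m-1}$ divides $F$, and I would divide it out of $F$, $f$ and all $g_j$. The same device handles $F(r_i)=0$: then all nonzero terms $b_j(r_i)g_j(r_i)$ must vanish, and one divides by $t-r_i$ where possible.
\item \emph{Perturbation.} Alternatively, perturb $f$ and the $g_j$ within the class of interlacing families to make all zeros simple and all inequalities strict. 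Then pass to the limit, using that real-rootedness and $\interl$ are preserved under coefficientwise limits, since they are closed conditions by Hurwitz's theorem.
\end{itemize}
Making this reduction rigorous while keeping the hypotheses $b_j(r)\le 0$ intact is the most delicate bookkeeping. I would try the factoring route first, and fall back on the limiting argument if the factoring fails to preserve those hypotheses.
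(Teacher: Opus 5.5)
Your Steps 1--3 are correct and are the standard sign-alternation argument behind this result. The paper gives no proof of its own and only cites Liu--Wang. Your factoring step for a zero $r$ of $f$ of multiplicity $m$ is also sound. Dividing $F,f,g_1,\dots,g_k$ by $(t-r)^{m-1}$ preserves (a)--(c) and $g_j\interl f$. The zeros of the new $f$ are among the old ones, so $b_j\le 0$ still holds there, and multiplying back preserves $\interl$.

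\textbf{The gap.} The other degenerate case is not handled: $f$ has simple zeros but $F(r_i)=0$ for some $i$. The hypothesis $b_j(r)\le 0$ explicitly allows this; it happens as soon as some $b_j$ vanishes at a zero of $f$. Neither of your routes closes it.
\begin{itemize}
\item \emph{Dividing by $t-r_i$ ``where possible''.} Since all summands $b_j(r_i)g_j(r_i)$ share a sign, $F(r_i)=0$ forces each to vanish. Typically this happens through $b_j(r_i)=0$ with $g_j(r_i)\neq 0$. Then $t-r_i$ divides $F$ and $f$ but not $g_j$. After dividing, $g_j$ no longer interleaves $f/(t-r_i)$, and $b_j/(t-r_i)$ has no sign control at the remaining zeros.
\item \emph{Perturbing $f$ and the $g_j$.} This fails for exactly the reason you worry about. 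Take $f=t(t-1)$, $b_1=-t$ and $b_2=t(1-2t)$. Both $b_j$ are $\le 0$ at $0$ and $1$. Yet moving the zero $0$ of $f$ in either direction makes one of them positive there.
\end{itemize}

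\textbf{The missing idea} is to keep $f$ fixed and perturb $F$ instead. After your factoring step $f$ has simple zeros. So $f'(r_i)$ has sign $\sigma(-1)^{n-i}$, where $\sigma$ is the sign of the leading coefficient of $f$. For $\delta>0$ set $F_\delta=F-\delta\varepsilon\sigma f'$.
\begin{itemize}
\item Since $\deg f'<n\le\deg F$, $F_\delta$ has the same degree and leading coefficient as $F$.
\item $F_\delta(r_i)$ is the sum of a term of sign $(-1)^{n-i+1}\varepsilon$ or $0$ (your Step 2) and a nonzero term of that same sign. Hence $F_\delta(r_i)$ is nonzero with that sign.
\item Step 3 then applies verbatim: $F_\delta$ is real-rooted, with zeros strictly interlaced by $r_1<\dots<r_n$.
\item Let $\delta\to 0^+$. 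The zeros of $F_\delta$ converge to those of $F$, because degree and leading coefficient are fixed. The strict inequalities become the weak ones in the definition of $\interl$. So $F$ is real-rooted and $f\interl F$.
\end{itemize}
Alternatively, divide $F,f,g_1,\dots,g_k$ by $\gcd(f,g_1,\dots,g_k)$. By your multiplicity observation, this also makes the zeros of $f$ simple. Then replace each $b_j$ by $b_j-\delta$ for small $\delta>0$. Now at every $r_i$ some $g_j(r_i)\neq 0$ contributes a strictly signed term, and the leading coefficients cannot cancel because they all have sign $\varepsilon$.
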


The following result from \cite[Thm.~4.4]{BrandenLeite2024x}
gives real-rootedness and interlacing directly from the generating function.

\begin{theorem}[Br\"and\'en--Leite]\label{thm:brandenleite}
Let $f(x)$ be a polynomial with nonnegative coefficients
and all real zeros.
Consider the formal power series
\begin{equation}\label{eq:BLgf}
\frac{1}{1-t(f(x)-f(0))} = \sum_{n\geq 0} r_n(t)\, x^n \in \mathbb{R}[t][[x]].
\end{equation}
Then $r_n(t)$ is real-rooted for each $n\in\mathbb{N}$,
and $r_n(t) \interl r_{n+1}(t)$.
Moreover, if $f(0)\neq 0$, then all zeros of $r_n(t)$ lie in $[-1/f(0),\, 0]$.
\end{theorem}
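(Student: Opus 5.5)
The plan is to reduce the statement to the preservation-of-interlacing criterion in Theorem~\ref{thm:ftof}, using that the coefficient sequence of a real-rooted polynomial with nonnegative coefficients is a P\'olya frequency sequence.

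\textbf{Step 1: a linear recursion.} Write $f(x)=c_0+c_1x+\dots+c_mx^m$ with $c_j\geq 0$. Expanding \eqref{eq:BLgf} gives $r_0=1$ and
\[
r_n(t)=t\sum_{j=1}^m c_j\, r_{n-j}(t),\qquad r_{-1}=\dots=r_{-m+1}=0 .
\]
This is the same shape as the recursion in Theorem~\ref{thm:genfuncformula}. Consider the state vector $v_n=(r_{n-m+1},\dots,r_n)^T$. Then $v_n=G\,v_{n-1}$, where $G$ is the companion-type matrix: ones on the superdiagonal, and bottom row $(tc_m,\dots,tc_1)$. Since the initial vector $(0,\dots,0,1)$ lies in $\mathscr{F}_m^+$, it suffices to show $G:\mathscr{F}_m^+\to\mathscr{F}_m^+$. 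Consecutive entries of $v_n$ then give $r_{n-1}\interl r_n$, and in particular each $r_n$ is real-rooted.

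\textbf{Step 2: checking Theorem~\ref{thm:ftof} for $G$.} Condition~(1) is clear. For condition~(2), the $2\times2$ submatrices of $G$ have a small number of shapes.
\begin{itemize}
\item Submatrices with two shift rows, or with one zero row, reduce to trivial interlacings of constants or monomials, as in the proof of Theorem~\ref{thm:interlacing}.
\item The genuine case combines a shift row with the bottom row. It has the form $\left[\begin{smallmatrix} \varepsilon_1 & \varepsilon_2\\ tc_a & tc_b\end{smallmatrix}\right]$ with $\varepsilon_i\in\{0,1\}$ and $a>b$.
\item When the shift row is $(1,0)$ this is harmless. When it is $(0,1)$ it becomes $(\lambda t+\mu)+tc_b\interl tc_a$, which holds because both sides have degree one and the zeros are ordered correctly.
\end{itemize}

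\textbf{Step 3: the main obstacle.} The step-by-step matrix $G$ alone is not enough. Arbitrary nonnegative $c_j$ do not give real-rootedness: Example~\ref{ex:222delayed} exhibits exactly such a failure. So real-rootedness of $f$ must enter, and I expect this to be the hard part.

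My first attempt is to factor $f(x)=c\prod_i(1+\alpha_ix)$ with $\alpha_i\geq0$. The coefficient sequence $(c_j)$ is then a P\'olya frequency sequence, and in particular all $2\times2$ minors $c_ac_{b}-c_{a+1}c_{b-1}$ of its Toeplitz matrix are nonnegative. The idea is to work with a triangular Toeplitz transfer matrix instead of the companion matrix $G$, namely
\[
v_n=\begin{bmatrix} r_0\\ \vdots\\ r_n\end{bmatrix},\qquad \bigl(T_{ij}\bigr)=tc_{i-j}.
\]
For this matrix, condition~(2) of Theorem~\ref{thm:ftof} should reduce precisely to the TP$_2$ inequalities $c_ac_{b}\geq c_{a+1}c_{b-1}$. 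If the degree bookkeeping in that reduction fails, my fallback is to treat one linear factor $(1+\alpha_ix)$ at a time. I would show that multiplying $f$ by such a factor corresponds to a transfer matrix with only two diagonals, whose $2\times 2$ conditions are elementary, and then compose these steps.

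\textbf{Step 4: locating the zeros.} Suppose $f(0)=c_0>0$. For $t>0$ every $r_n(t)>0$, so there are no positive zeros. For the lower bound, substitute
\[
u=\frac{t}{1+tc_0},\qquad \frac{1}{1-t(f-c_0)}=\frac{1}{1+tc_0}\cdot\frac{1}{1-uf(x)} .
\]
The coefficients of $1/(1-uf(x))$ in $x$ have nonnegative coefficients in $u$, so they are nonzero for $u>0$. The Möbius map $t\mapsto u$ sends $t<-1/c_0$ to $u>1/c_0>0$. Hence $r_n(t)\neq0$ there, and all zeros lie in $[-1/f(0),0]$.
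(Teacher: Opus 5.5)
\textbf{Gap 1: the transfer-matrix argument fails.} The error is already in Step 2. For $m\geq 3$ the companion matrix $G$ contains the submatrix $\left[\begin{smallmatrix}1&0\\ tc_a&tc_b\end{smallmatrix}\right]$, coming from rows $i\le m-2$ and $m$ and columns $i+1<l$. For it, the condition of Theorem~\ref{thm:ftof} reads $c_bt\interl(\lambda+c_a)t+\mu$. This is false whenever $c_b>0$: both sides are linear, and the zero $0$ of the left side lies to the right of the zero $-\mu/(\lambda+c_a)$ of the right side. For example, $f=(1+x)^3$ with rows $1,3$ and columns $2,3$ requires $3t\interl(\lambda+3)t+\mu$, which fails.

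This is not a bookkeeping slip. Whenever $c_1>0$ the top coefficient of $r_n$ is $c_1^n$, so $\deg r_n=n$. Then $r_{n-2}\interl r_n$ is impossible under the paper's definition of $\interl$, and once its entries are nonzero the state $(r_{n-m+1},\dots,r_n)$ is never in $\mathscr{F}_m^+$ for $m\ge 3$. For $f=(1+x)^3$, $r_1=3t$ and $r_3=27t^3+18t^2+t$ already violate it. Your Toeplitz variant in Step 3 has the same defect, since $(r_0,\dots,r_n)\in\mathscr{F}_{n+1}^+$ would demand $r_i\interl r_j$ for all $i<j$.

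Step 2 also never uses real-rootedness of $f$. If it were correct it would prove the theorem for $f=x+x^3$, where $r_3=t^3+t$ has zeros $\pm i$. That is the relevant counterexample. Example~\ref{ex:222delayed} is not one: its generating function $1/(1-x-3tx^3-t^2x^4-t^2x^5)$ is not of the form \eqref{eq:BLgf}.

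Your fallback of treating one factor $(1+\alpha_i x)$ at a time has no mechanism behind it. Multiplying $f$ by a linear factor does not correspond to composing $1/(1-t(f-f(0)))$ with any transfer matrix. So the one step where real-rootedness of $f$ must enter, which is the whole content of the theorem, is missing. A repair would need a state vector whose entries have degrees within one of each other (in the spirit of the refined $Q_{d,n,j}$ in Theorem~\ref{thm:interlacing}), with $2\times 2$ conditions that genuinely use the factorization of $f$, or a different tool. The paper gives no in-paper proof to compare against; it simply quotes the result from \cite{BrandenLeite2024x}.

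\textbf{Gap 2: the lower zero bound in Step 4.} Your argument that there are no positive zeros is fine, but the bound $-1/f(0)$ is not proved. When $f(0)\neq 0$, the $x$-coefficients of $1/(1-uf(x))=\sum_k u^kf(x)^k$ are power series in $u$, not polynomials, because every $f^k$ has constant term $f(0)^k$. In fact $[x^n]\frac{1}{1-uf(x)}=\frac{1}{1-uf(0)}\,r_n\bigl(\frac{u}{1-uf(0)}\bigr)$, which has a pole at $u=1/f(0)$. Already for $n=0$ this equals $1/(1-uf(0))$, which is negative for $u>1/f(0)$. Nonnegative Taylor coefficients give positivity only for $0<u<1/f(0)$, which corresponds to $t>0$, the range you already controlled. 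Your substitution merely reparametrizes $r_n$, so nonvanishing for $u>1/f(0)$ is literally equivalent to $r_n(t)\neq 0$ for $t<-1/f(0)$. The argument is therefore circular, and the bound $-1/f(0)$ still needs an input that uses the zeros of $f$.
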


\begin{theorem}[Br\"and\'en--Leite]\label{thm:brandenleite2}
Let $Q(x)$, where $Q(0)>0$, be a polynomial whose zeros are all real
and positive, and let $r$ be a positive integer.
Consider the power series
\begin{equation}\label{eq:BLgf2}
\frac{1}{Q(x)-tx^r} = \sum_{n\geq 0} q_n(t)\, x^n.
\end{equation}
Then $q_n(t)$ is a polynomial whose zeros are real and negative for each $n\in\mathbb{N}$.
Moreover, $q_n(t) \interl q_{n+1}(t)$ for each $n\in\mathbb{N}$.
\end{theorem}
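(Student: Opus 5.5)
The plan is to work with the expansion in $t$ rather than in $x$ directly. Write $Q(x)=Q(0)\prod_{i=1}^m(1-\alpha_i x)$ with all $\alpha_i>0$; this is possible because the zeros of $Q$ are real and positive. Then
\[
\frac{1}{Q(x)-tx^r}=\sum_{k\geq 0} t^k\,\frac{x^{rk}}{Q(x)^{k+1}},
\qquad\text{so}\qquad
q_n(t)=\sum_{k\geq 0} c_{n,k}\,t^k,\quad c_{n,k}=[x^{n-rk}]\,Q(x)^{-(k+1)}.
\]
Since each $1/(1-\alpha_i x)$ has positive coefficients, every $c_{n,k}$ is nonnegative. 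Hence $q_n$ is a polynomial in $t$ with nonnegative coefficients and positive constant term $c_{n,0}$. Once real-rootedness is known, the zeros are therefore automatically negative, so the real content of the statement is real-rootedness together with $q_n\interl q_{n+1}$.

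For real-rootedness and interlacing I would set up a transfer-matrix argument in the spirit of Theorem~\ref{thm:twoColumnRealRooted} and Theorem~\ref{thm:interlacing}. Comparing coefficients of $x^n$ in $(Q(x)-tx^r)\sum_n q_n x^n=1$ gives, for $n\geq1$,
\[
\sum_{j=0}^m Q_j\, q_{n-j} = t\, q_{n-r}, \qquad Q(x)=\sum_j Q_j x^j .
\]
The coefficients $Q_j$ alternate in sign, so this recursion is not directly in the form of Theorem~\ref{thm:ftof}. I would therefore factor the recursion through the linear factors of $Q$. For $s=0,1,\dots,m$ introduce auxiliary series
\[
G^{(s)}(x)=\frac{\prod_{i>s}(1-\alpha_i x)}{Q(x)-tx^r}.
\]
Each passage from $G^{(s-1)}$ to $G^{(s)}$ is the positive recursion $g^{(s)}_n=g^{(s-1)}_n+\alpha_s g^{(s)}_{n-1}$. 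The closing relation at $s=m$ and the one at $s=0$ are linked through the term $t x^r G^{(m)}$.

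The state I would propagate is the vector $\bigl(g^{(0)}_n,\dots,g^{(m)}_n, q_{n-1},\dots,q_{n-r}\bigr)$. I would show that the one-step map from index $n-1$ to index $n$ is a matrix with entries in $\{0,1,\alpha_i,t\}$ and nonnegative coefficients, lower-triangular apart from the single entry $t$ coming from $x^r$. I would then verify the $2\times2$ condition of Theorem~\ref{thm:ftof} for it, exactly as was done for the nine submatrices in the proof of Theorem~\ref{thm:interlacing}. The submatrices involving the entry $t$ reduce, as in Theorem~\ref{thm:twoColumnRealRooted}, to conditions of the form $\lambda t+\mu+c\interl t$, which hold. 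The initial vector consists of nonnegative constants and is trivially interlacing. Induction then places the vector in $\mathscr{F}^+$ at every step, which gives real-rootedness of each $q_n$ and $q_n\interl q_{n+1}$.

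The main obstacle is the middle step: finding an ordering of the state vector so that every $2\times2$ minor of the transfer matrix satisfies condition (2) of Theorem~\ref{thm:ftof}. Repeated factors $1-\alpha x$, and positions where the $t$ entry meets an $\alpha_i$ entry, may break this. If the direct check fails, my fallback is Theorem~\ref{thm:liuwang}. I would write $q_{n+1}=a\,q_n+\sum_j b_j\,g_j$, taking for the $g_j$ the auxiliary sequences above, which are interlaced by $q_n$ by induction. I would then verify $b_j(\rho)\leq0$ at the zeros $\rho$ of $q_n$ using the sign information $\rho<0$ from the first paragraph.
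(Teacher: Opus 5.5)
The first paragraph of your proposal is fine: you get nonnegative coefficients and, from $c_{n,0}>0$, negativity of the zeros. (This needs $\deg Q\ge 1$; for constant $Q$ the $q_n$ are monomials and ``negative'' only holds as ``nonpositive''.) The reduction to a positive recursion is also correct. Normalize $Q(0)=1$ by rescaling $t$ and eliminate the same-index term $g^{(s-1)}_n$; your recursion then reads
\[
g^{(s)}_n=t\,q_{n-r}+\sum_{i=1}^{s}\alpha_i\,g^{(i)}_{n-1}\quad(n\ge1),\qquad q_n=g^{(m)}_n .
\]

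The gap is that the only substantive step, condition (2) of Theorem~\ref{thm:ftof}, is never carried out, and for the matrix as you describe it that condition fails. The variable $t$ is not a single entry: it fills the whole column of $q_{n-r}$, one $t$ in every $g$-row. In your order $(g^{(0)},\dots,g^{(m)},q_{n-1},\dots,q_{n-r})$ that column lies to the right of the $g^{(i)}$-columns. So some $2\times2$ submatrix has $b=t$ and $a=\alpha_i\neq0$. Then $(\lambda t+\mu)t+d$ has degree $2$ and cannot interleave $(\lambda t+\mu)a+c$, which has degree $1$. Already for $Q=1-\alpha x$, $r=2$, positions $2,3$ give $\left[\begin{smallmatrix}\alpha&t\\1&0\end{smallmatrix}\right]$. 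Your order would also assert $q_n\ll q_{n-1}$, which is the wrong direction.

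The Liu--Wang fallback does not work as sketched either. The $g^{(i)}_n$ enter $q_{n+1}$ with coefficients $\alpha_i>0$, so $b_j(\rho)\le 0$ is violated. Moreover, Theorem~\ref{thm:liuwang} needs $g_j\ll q_n$, whereas your induction would give $q_n\ll g_j$.

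The approach does work once you reverse your order. Take the state $(q_{n-r},\dots,q_{n-1},g^{(m)}_n,g^{(m-1)}_n,\dots,g^{(0)}_n)$. The transfer matrix then has the shift rows $e_2,\dots,e_{r+1}$ on top, the last of which picks out $g^{(m)}_{n-1}=q_{n-1}$. Below them come the rows $t\,e_2+\sum_{1\le i\le s}\alpha_i e_{g^{(i)}}$ for $s=m,\dots,0$. So column~$1$ is zero and all $t$'s sit in column~$2$. The constant part of the $g$-rows is upper triangular, with column $g^{(i)}$ equal to $\alpha_i$ on and above the diagonal.

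The required checks are then as follows.
\begin{enumerate}
\item Submatrices avoiding column~$2$ are constant with $ad\ge bc$, which is exactly condition (2) for constant entries.
\item Submatrices through column~$2$ reduce to one of three forms: $d\ll\lambda t+\mu+c$; $(\lambda t+\mu)b+d\ll t$; or $(\lambda t+\mu)b+d\ll t(\lambda t+\mu+1)$ with $d\in\{0,b\}$, whose root lies in $[-(\mu+1)/\lambda,0]$.
\item For $r=1$ the column-$2$ entry of row $g^{(m)}$ is $\alpha_m+t$. Here you use that $(\lambda t+\mu)(\alpha_m+t)+t<0$ at $t=-(\mu+\delta)/\lambda$, $\delta\in\{0,1\}$.
\item Whenever one side of a condition vanishes, the other side is still real-rooted, which the convention for $0\ll h$ requires.
\end{enumerate}
The initial vector $(0,\dots,0,1,\dots,1)$ lies in $\mathscr{F}^+$, and repeated $\alpha_i$ cause no trouble. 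Then $q_{n-1}\ll q_n$ is read off from adjacent entries of the state vector.

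The paper gives no proof of this statement; it only quotes it from Br\"and\'en--Leite. With this repair, your argument would be a self-contained proof.
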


\begin{remark}
The generating function for $\mu=(h,1)$, $d=h$ fits into the framework of~\cite[Thm.~2.3]{Liu2007},
which gives an alternative proof of real-rootedness for this family.
For $d=2$ and hooks $\mu=(h,1,\ldots,1)$, the sequence of tilings counted
by $\gB_{\mu,2}(1,1)$ coincides with the Pell--Padovan sequence studied in~\cite{Bilgici2013}.
Real-rootedness in this case also follows
from Theorem~\ref{thm:genFunc} together with~\cite[Thm.~4.5]{BrandenLeite2024x}.
\end{remark}

\begin{remark}
For general $\mu$, the limiting behavior of the zeros of $P_{\mu,d,n}(t)$
as $n\to\infty$ could perhaps be studied using
the Beraha--Kahane--Weiss theorem~\cite{Beraha1975} on limits
of roots of linear recurrences.
\end{remark}

\begin{example}
Consider $\mu=(3,2,1)$ and $d=3$.
Summing all the monomials gives us:
\begin{equation}
\gB_{321,3}(x,t)=(x+3tx^3+t^2x^4+3t^2x^5+t^3x^7).
\end{equation}
We note that for $t=1$ this is not a unimodal polynomial.
\end{example}

However, for hooks we can prove
the following unimodality result.
\begin{theorem}\label{thm:hookUnimodal}
Suppose $\mu = (h,1,1,\dotsc,1)$ of width $\ell+1$.
Then $\gB_{\mu,d}(x,1)-x$ is unimodal with no internal zeros.
\end{theorem}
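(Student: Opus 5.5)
The plan is to reduce the statement to a polynomial in which unimodality can be propagated by induction on $d$. By \eqref{eq:BmudHookSum} in Theorem~\ref{thm:genFunc}, with $S=x+x^2+\dotsb+x^\ell$, we have
\[
\gB_{\mu,d}(x,1)-x = x^{\ell+1}\,\frac{(1+S)^d-1}{S}.
\]
Multiplying by $x^{\ell+1}$ only shifts coefficients. So it suffices to show that
\[
G_d(x) \coloneqq \frac{(1+S)^d-1}{S}
\]
has unimodal coefficients with no internal zeros. The key observation is that this is a geometric sum with ratio $y=1+S$, since $(y^d-1)/(y-1)=\sum_{i=0}^{d-1}y^i$. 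Hence
\[
G_d = \sum_{i=0}^{d-1}(1+S)^i, \qquad G_1=1, \qquad G_{d+1} = 1 + (1+S)\,G_d .
\]
I will prove the claim by induction on $d$ using this recursion rather than the sum, because a sum of unimodal polynomials with different modes need not be unimodal.

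For the induction step, assume $G_d=\sum_j g_jx^j$ is unimodal with no internal zeros and positive coefficients on its support $\{0,\dots,(d-1)\ell\}$. Multiplying by $1+S=1+x+\dotsb+x^\ell$ replaces each coefficient by a moving window sum $\sum_{r=0}^{\ell} g_{j-r}$. The all-ones sequence is log-concave with no internal zeros, so by the classical fact (Ibragimov) that convolution with such a sequence preserves unimodality, $(1+S)G_d$ is unimodal. It also clearly has no internal zeros, since its support is the interval $\{0,\dots,d\ell\}$. One could alternatively argue directly that the window sums increase while the window lies left of the mode and decrease once it has passed.

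It remains to add $1$ to the constant term, which I expect to be the only delicate point. Adding to $a_0$ preserves unimodality exactly when the new $a_0$ does not exceed $a_1$, or when the sequence is already nonincreasing from $a_0$. In $(1+S)G_d$ we have $a_0=g_0=d$ and $a_1=g_0+g_1$. Since $g_1$ is the coefficient of $x$ in $\sum_{i<d}(1+S)^i$, it equals $\sum_{i<d} i = d(d-1)/2$ for $\ell\ge 1$. For $d\ge2$ this gives $a_1\ge d+1$, so the new constant term $d+1$ still sits on the increasing part. For $d=1$ we get $G_2=2+x+\dotsb+x^\ell$, which is visibly nonincreasing and hence unimodal. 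This completes the induction, and shifting by $x^{\ell+1}$ gives the theorem.
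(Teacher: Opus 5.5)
Your proof is correct. It starts from the same reduction as the paper: both rewrite $\gB_{\mu,d}(x,1)-x$ as $x^{\ell+1}\sum_{i=0}^{d-1}(1+S)^i$. The two arguments differ in the key step.

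\begin{itemize}
\item The paper invokes the Ahmia--Belbachir theorem: $f(1+x+\dots+x^\ell)$ is unimodal whenever $f$ has nonnegative nondecreasing coefficients. It applies this to $f(y)=1+y+\dots+y^{d-1}$.
\item You prove this special case directly, using the Horner-type recursion $G_{d+1}=1+(1+S)G_d$.
\begin{itemize}
\item The multiplication step follows from the classical fact (Ibragimov, Keilson--Gerber) that convolution with a log-concave sequence without internal zeros preserves unimodality.
\item The only delicate point is adding $1$ to the constant term. You settle it with the explicit inequality $d+1\le d+\binom{d}{2}$ for $d\ge 2$, and check $d=1$ by hand.
\end{itemize}
\end{itemize}

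The paper's route is a one-line citation and covers any nondecreasing coefficient sequence at once. Yours is self-contained apart from a standard lemma. It also shows exactly where monotonicity of the coefficients of $f$ is needed, namely the constant-term step. Finally, because you track supports, you actually prove the absence of internal zeros, which the paper only asserts.
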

\begin{proof}
By Theorem~\ref{thm:genFunc}, with $S = x + x^2 + \cdots + x^\ell$, we have
\[
\gB_{\mu,d}(x,1) - x = x^{\ell+1} \frac{(1+S)^d-1}{S}
= x^{\ell+1} \sum_{j=0}^{d-1} \binom{d}{j+1} S^j.
\]
Let $T = 1 + S = 1 + x + x^2 + \cdots + x^\ell$. Then
\[
\gB_{\mu,d}(x,1) - x = x^{\ell+1} \cdot \frac{T^d - 1}{T - 1}
= x^{\ell+1} \left( 1 + T + T^2 + \cdots + T^{d-1} \right).
\]
It now suffices to show that $1 + T + T^2 + \cdots + T^{d-1}$ is unimodal.
This follows directly from the main result in \cite{AhmiaBelbachir2018},
that states that if $f(t)$ is a polynomial with non-decreasing,
non-negative coefficients, then $f( 1+x+x^2 + \dotsc + x^\ell)$ is unimodal.
Moreover, $1 + T + T^2 + \cdots + T^{d-1}$ has no internal zeros, and multiplication by $x^{\ell+1}$ preserves both unimodality and the absence of internal zeros.
\end{proof}

The $\mu$ considered in Theorem~\ref{thm:genFunc} is a special case of the following 
more general result.
We say that a Ferrers tile $\mu=(\mu_0,\mu_1,\dotsc,\mu_\ell)$ with $\ell\geq 1$
is \defin{rigid} (with respect to~$d$) if $\mu_\ell=1$ and $\mu_{\ell-1}\geq d$.
In a rigid tile, the only feasible horizontal offset between
consecutive anchor squares in a fault-free word is exactly~$\ell$.

\begin{figure}[ht]
\centering
\begin{tikzpicture}[scale=0.45,baseline=(current bounding box.center)]
\draw[step=1cm, thin] (0,0) grid (7,8);
\draw[thick,fill=skyblue,line width=1pt]
  (0,0) -- ++(3,0) -- ++(0,1) -- ++(-1,0) -- ++(0,4) -- ++(-2,0) -- cycle;
\draw[thick,fill=softgreen,line width=1pt]
  (2,1) -- ++(3,0) -- ++(0,1) -- ++(-1,0) -- ++(0,4) -- ++(-2,0) -- cycle;
\draw[thick,fill=lavender,line width=1pt]
  (4,3) -- ++(3,0) -- ++(0,1) -- ++(-1,0) -- ++(0,4) -- ++(-2,0) -- cycle;
\draw[step=1cm, thin] (0,0) grid (7,8);
\fill[skyblue] (2,0) rectangle (3,1);
\fill[softgreen] (4,1) rectangle (5,2);
\fill[lavender] (6,3) rectangle (7,4);
\draw[thick] (2,0) rectangle (3,1);
\draw[thick] (4,1) rectangle (5,2);
\draw[thick] (6,3) rectangle (7,4);
\draw[black,line width=1.2pt]
  (0,0) -- ++(3,0) -- ++(0,1) -- ++(-1,0) -- ++(0,4) -- ++(-2,0) -- cycle;
\draw[black,line width=1.2pt]
  (2,1) -- ++(3,0) -- ++(0,1) -- ++(-1,0) -- ++(0,4) -- ++(-2,0) -- cycle;
\draw[black,line width=1.2pt]
  (4,3) -- ++(3,0) -- ++(0,1) -- ++(-1,0) -- ++(0,4) -- ++(-2,0) -- cycle;
\end{tikzpicture}
\caption{A fault-free placement of three rigid tiles of shape $\mu=(5,5,1)$.
Here $\ell=2$, and consecutive anchors are separated by exactly two columns,
illustrating the only admissible horizontal offset used in the proof of
Proposition~\ref{prop:BmudRigid}.}
\label{fig:rigid-five-five-one}
\end{figure}

Rigid tiles are claw-free, so real-rootedness of $P_{\mu,d,n}(t)$ 
follows from the Chudnovsky--Seymour theorem~\cite{Chudnovsky2007}.
However, we shall prove a stronger property 
in Theorem~\ref{thm:rigidRealRooted} further down using \emph{totally nonnegative matrices}.

\begin{proposition}\label{prop:BmudRigid}
Let $\mu$ be a rigid tile.
Then
\begin{equation}\label{eq:BmudRigid}
\gB_{\mu,d}(x,t) = x(1+tx^\ell)^d.
\end{equation}
In particular, $\gF_{\mu,d}(x,t) = \frac{1}{1-x(1+tx^\ell)^d}$.
\end{proposition}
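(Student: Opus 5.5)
We work directly with fault-free anchor words for $\mu$ and classify them by the number $k$ of integer entries (big tiles). The word $(\infty)$ contributes $x$. We will show that for each $k\geq 1$ the fault-free words with $k$ integers are exactly those with strictly increasing anchor rows $a<b<\dots$ (any $k$-subset of $\{1,\dots,d\}$) at consecutive offsets of exactly $\ell$ columns. Such a word has width $k\ell+1$, which yields
\[
\gB_{\mu,d}(x,t)=x+\sum_{k=1}^d \binom{d}{k} t^k x^{k\ell+1} = x(1+tx^\ell)^d,
\]
and then the formula for $\gF_{\mu,d}$ follows immediately from Theorem~\ref{thm:genfuncformula}.

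First we pin down the offsets. Take two consecutive integer entries at positions $i<j$ of an anchor word, with values $a_i,a_j$, and offset $s=j-i$. If $1\leq s\leq \ell-1$, then \eqref{anchorwordinequalites} forces $a_j\geq a_i+\mu_s\geq a_i+\mu_{\ell-1}\geq 1+d$, which is impossible since $a_j\leq d$. Hence $s\geq \ell$. If $s=\ell$, the constraint is $a_j\geq a_i+\mu_\ell=a_i+1$, so the rows strictly increase. Also, no constraint links entries at distance greater than $\ell$. Next we use fault-freeness: a fault-free word starts with an integer, and its only occurrence of $\ell$ consecutive $\infty$'s is at the end. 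If $s\geq \ell+1$, there would be $s-1\geq \ell$ consecutive $\infty$'s strictly inside the word, which is forbidden. So every offset is exactly $\ell$, and since the gaps between consecutive integers contain only $\ell-1$ $\infty$'s, the word terminates with exactly $\ell$ trailing $\infty$'s after the last integer.

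Conversely, we check that for any rows $r_1<r_2<\dots<r_k$ in $\{1,\dots,d\}$, placing them at positions $1,\ell+1,\dots,(k-1)\ell+1$ and padding with $\infty$ up to length $k\ell+1$ gives a valid fault-free anchor word. Validity holds because only pairs at distance exactly $\ell$ are constrained, and there the inequality $r_{m+1}\geq r_m+1$ holds. Fault-freeness holds because no internal run of $\ell$ consecutive $\infty$'s occurs. Choosing the rows accounts for the factor $\binom{d}{k}$, and $k\leq d$ is automatic.

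The step we expect to need the most care is justifying that fault-freeness of the tiling coincides with the stated word condition, namely that ``no internal block of $\ell$ infinities'' rules out every offset larger than $\ell$. Since the paper defines fault-free words by exactly this substring condition, we will simply invoke that definition rather than arguing geometrically about fault lines. The rest is bookkeeping. As a sanity check, the case $\ell=1$ should recover Theorem~\ref{thm:genFunc}.
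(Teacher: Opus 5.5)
Your proposal is correct and follows the same route as the paper. You rule out offsets $s\leq \ell-1$ via $\mu_s\geq\mu_{\ell-1}\geq d$, note that offset $\ell$ only forces strictly increasing rows, and count $\binom{d}{k}$ row choices for a word of width $k\ell+1$; your use of the substring definition of fault-free words to exclude offsets $s\geq \ell+1$, and your check of the converse construction, make explicit what the paper leaves implicit.
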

\begin{proof}
As illustrated in Figure~\ref{fig:rigid-five-five-one}, a rigid tile can only
be followed by placing the next anchor $\ell$ columns to the right.
If the next anchor were any closer, the second tile would have to start above
a column of height at least $\mu_{\ell-1}\geq d$, which is outside the
$d$ available anchor rows.
At offset $\ell$, the last column has height $\mu_\ell=1$, so the next anchor
only has to move up by one row.
In a fault-free word with $j$ tiles, the anchors therefore occupy columns $1,\ell{+}1,2\ell{+}1,\dotsc$
with strictly increasing rows, and the word has length $j\ell+1$.
Choosing $j$ rows from $\{1,\dotsc,d\}$ gives $\binom{d}{j}$ options,
so $\gB_{\mu,d}(x,t) = \sum_{j=0}^{d}\binom{d}{j}t^j x^{j\ell+1} = x(1+tx^\ell)^d$.
\end{proof}

Recall that a matrix is totally nonnegative if every minor is nonnegative.
The following result of Brenti provides a powerful tool for
establishing total nonnegativity and real-rootedness simultaneously.

\begin{theorem}[Brenti~\cite{Brenti1995}]\label{thm:brenti}
Let $D$ be a locally finite digraph on $\mathbb{N}^2$
with nonnegative edge weights.
Assume that $D$ is planar and weakly $y$-invariant
(i.e., the edges from $(m,k)$ are independent of~$m$).
Let $M_{n,j}\coloneqq P_D\bigl((0,0),(n,j)\bigr)$
be the weighted path count from $(0,0)$ to $(n,j)$.
Then $M=(M_{n,j})$ is totally nonnegative
and every row is a P\'olya frequency sequence.
In particular, every row polynomial $\sum_j M_{n,j}\,t^j$
has only real nonpositive zeros.
\end{theorem}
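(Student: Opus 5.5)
The plan is to derive everything from the Lindström--Gessel--Viennot (LGV) lemma. Planarity will force the vertex-disjoint path systems to be non-crossing, and $y$-invariance will let each matrix of interest be written as a path-count matrix between suitably shifted sources and sinks. I have not checked Brenti's exact coordinate convention, so which coordinate the shift acts on is flagged as an uncertainty below.

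\emph{Total nonnegativity.} Take a minor of $M$ with row indices $n_1<\dots<n_r$ and column indices $j_1<\dots<j_r$. Using the invariance of the edge set under shifts in the first coordinate, the weighted path count from $(0,0)$ to $(n,j)$ equals the count from $(c,0)$ to $(c+n,j)$ for every shift $c$. Choose sources $u_a=(N-n_a,0)$ with $N=\max_a n_a$, and sinks $v_b=(N,j_b)$. Then $P_D(u_a,v_b)=M_{n_a,j_b}$, so the minor is $\det[P_D(u_a,v_b)]$. Since $D$ has nonnegative weights and is locally finite, all path sums are finite. LGV then expresses this determinant as a signed sum over vertex-disjoint path families $u_a\to v_{\sigma(a)}$. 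Planarity, together with the sources lying on one boundary line and the sinks on another in the orders given above, forces every non-crossing family to use $\sigma=\mathrm{id}$. Hence the minor equals a sum of products of nonnegative weights, so it is $\geq 0$.

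\emph{Rows are P\'olya frequency sequences.} Fix $n$. A row is PF exactly when its Toeplitz matrix $(M_{n,j-i})_{i,j}$ is totally nonnegative. I would realize $M_{n,j-i}$ as the path count from a source indexed by $i$ to a sink indexed by $j$, again by $y$-invariance: the path count from $(0,i)$ to $(n,j)$ should depend only on $j-i$ when the edge data at level $k$ are invariant in the appropriate coordinate. Applying LGV with sources $(0,i_1),\dots,(0,i_r)$ and sinks $(n,j_1),\dots,(n,j_r)$, and using planarity as before, every minor of the Toeplitz matrix is a nonnegative weighted count of non-intersecting path families.

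The real-rootedness claim then follows from the Aissen--Schoenberg--Whitney--Edrei theorem: a finite nonnegative sequence is PF if and only if its generating polynomial has only real nonpositive zeros. Applying this to each row gives the statement about $\sum_j M_{n,j}t^j$.

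The main obstacle is the step where non-identity permutations are shown to contribute nothing. Two things must be checked there. First, the shifted sources and sinks must genuinely lie in a compatible order on the boundary of the planar embedding, and this relies on the specific geometry of $\mathbb{N}^2$ with the edge orientation. Second, the convention for which coordinate the invariance acts on must be matched so that the Toeplitz entries are honest path counts. If the convention turns out to be the opposite one, I would swap the roles of the two coordinates in the two arguments above.
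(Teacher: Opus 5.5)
Your total-nonnegativity argument is sound. It only uses invariance under shifts of the \emph{first} coordinate, $P_D((0,0),(n,j))=P_D((N-n,0),(N,j))$, which is the stated hypothesis. With sources on the bottom edge and sinks on the right edge of the strip $[0,N]\times[0,\infty)$, the only non-crossing pairing is the identity. You should still state three things explicitly: shift invariance on $\mathbb{N}^2$ forces edges never to decrease the first coordinate, so paths stay in the strip; LGV needs $D$ acyclic; and ``planar'' must mean crossing-free in the drawing with vertices at their lattice points, not abstract planarity.

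The genuine gap is the P\'olya frequency step. Realizing the Toeplitz entry $M_{n,j-i}$ as $P_D((0,i),(n,j))$ requires invariance under shifts of the \emph{second} coordinate, and that is not among the hypotheses. Your fallback of swapping the coordinates does not repair this. The two halves of your proof use invariance in different coordinates, so a single invariance hypothesis can support at most one of them.

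Moreover, no argument can close the gap under the hypothesis as written, because the row conclusion is false there. Let $D$ have edges $(m,k)\to(m+1,k)$ of weight $1$ and $(m,k)\to(m+1,k+1)$ of weight $w_k$, with $w_0=1$ and $w_1=2$. This digraph is locally finite and planar, and its edges and weights at $(m,k)$ do not depend on $m$. Here $M_{n,j}=\binom{n}{j}w_0w_1\cdots w_{j-1}$, a column scaling of Pascal's matrix, so it is totally nonnegative, in line with your first argument. But row $n=2$ is $(1,2,2)$, and $1+2t+2t^2$ has non-real zeros.

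So the statement needs invariance in the second coordinate as well, i.e.\ under translations in both coordinates. With that hypothesis, your two LGV arguments plus the Aissen--Schoenberg--Whitney--Edrei theorem give a complete proof. That hypothesis is satisfied by the digraph with edges $(m,k)\to(m+1,k)$ and $(m,k)\to(m+d\ell+1,k+1)$ used in the proof of Theorem~\ref{thm:rigidRealRooted}, which is the only case the paper needs.
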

We say that the sequence of polynomials $P_1(t),\ P_2(t), \dotsc$
form a \defin{Brenti sequence} if there are rows $r_1,\ r_2,\ \dotsc,$
in a matrix from Theorem~\ref{thm:brenti}
such $P_k = \sum_j M_{r_k,j}\,t^j$ for all $k$.

\begin{theorem}\label{thm:rigidRealRooted}
For any rigid tile $\mu,d$ we have that 
\begin{equation}\label{eq:rigidCoeff}
P_{\mu,d,n}(t) = \sum_{j\geq 0} \binom{d(n-j\ell)}{j}\, t^j,
\end{equation}
and $\left\{ P_{\mu,d,n}(t) \right\}_{n \geq 1}$ is a Brenti sequence.
In particular, all $P_{\mu,d,n}(t)$ have only real nonpositive zeros.
\end{theorem}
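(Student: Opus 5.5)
The plan is to first extract coefficients from the generating function in Proposition~\ref{prop:BmudRigid}, and then realize the resulting binomial numbers as weighted lattice-path counts in a digraph satisfying the hypotheses of Theorem~\ref{thm:brenti}.

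\textbf{Step 1: the coefficient formula \eqref{eq:rigidCoeff}.} By Proposition~\ref{prop:BmudRigid} and Theorem~\ref{thm:genfuncformula},
\[
\gF_{\mu,d}(x,t)=\frac{1}{1-x(1+tx^\ell)^d}=\sum_{m\geq 0} x^m (1+tx^\ell)^{dm}
=\sum_{m\geq 0}\sum_{j\geq 0}\binom{dm}{j}t^j x^{m+j\ell}.
\]
Fix $n$ and $j$. The monomial $t^jx^n$ arises only from the single term with $m=n-j\ell$. Reading off the coefficient of $x^n$ therefore gives
\[
P_{\mu,d,n}(t)=\sum_{j}\binom{d(n-j\ell)}{j}t^j .
\]
Terms with $n-j\ell<0$ are absent, which is consistent with reading the binomial as zero there.

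\textbf{Step 2: a planar, weakly $y$-invariant digraph.} Set $c=d\ell$ and let $D$ be the digraph on $\mathbb{N}^2$ with two families of edges, each of weight $1$:
\begin{itemize}
\item horizontal edges $(m,k)\to(m+1,k)$;
\item slanted edges $(m,k)\to(m+c+1,k+1)$.
\end{itemize}
The edges leaving $(m,k)$ are the same for every $m$, so $D$ is weakly $y$-invariant in the sense of Theorem~\ref{thm:brenti}. Here the first coordinate plays the role of the row index, and I will need to match this carefully with Brenti's convention.

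\textbf{Step 3: counting paths.} A path from $(0,0)$ to $(N,j)$ uses exactly $j$ slanted steps and $N-(c+1)j$ horizontal steps. That is $N-cj$ steps in total, and any ordering of them is allowed. Hence
\[
P_D\bigl((0,0),(N,j)\bigr)=\binom{N-cj}{j}.
\]
Taking $N=dn$ gives $\binom{dn-d\ell j}{j}=\binom{d(n-j\ell)}{j}$. So $P_{\mu,d,n}(t)$ is exactly the row polynomial of row $r_n=dn$ of $M$, and $\{P_{\mu,d,n}\}_{n\geq1}$ is a Brenti sequence. Theorem~\ref{thm:brenti} then gives total nonnegativity and real nonpositive zeros.

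\textbf{Step 4: planarity and local finiteness (main obstacle).} This is the part where I expect the real care to be needed, together with checking that our convention for the row index agrees with Brenti's.
\begin{itemize}
\item Every vertex has out-degree $2$ and only finitely many in-edges, so $D$ is locally finite.
\item Draw all edges as straight segments. Horizontal edges lie on the lines $y=k$ and meet only at endpoints.
\item Each slanted edge lies strictly between the lines $y=k$ and $y=k+1$, except at its two endpoints, so it never crosses a horizontal edge.
\item Slanted edges in the same strip are parallel translates of one another, so two of them never cross; they can share at most an endpoint.
\end{itemize}
Thus the straight-line drawing is a planar embedding. If Brenti's planarity notion instead requires non-intersecting path systems in the Lindström--Gessel--Viennot sense, this drawing gives exactly that.
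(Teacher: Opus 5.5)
Your proposal is correct and follows the paper's proof step for step. It uses the same coefficient extraction from $\frac{1}{1-x(1+tx^\ell)^d}$, the same digraph with unit horizontal steps and $(d\ell+1,1)$ diagonal steps, the same path count $\binom{N-jd\ell}{j}$, and the same choice of rows $N=dn$. Your explicit straight-line planarity check fills in a detail the paper only asserts. Your worry about Brenti's coordinate convention does no harm, because $D$ is invariant under translation in both coordinates.
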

\begin{proof}
The formula~\eqref{eq:rigidCoeff} follows from Proposition~\ref{prop:BmudRigid} by expanding
\[
\frac{1}{1-x(1+tx^\ell)^d} = \sum_{m\geq 0} x^m(1+tx^\ell)^{dm}
\]
and extraction of coefficients.

For the second part, define a digraph $D$ on $\mathbb{N}^2$
with two edges from every vertex $(m,k)$:
a \emph{horizontal} edge to $(m{+}1,\,k)$
and a \emph{diagonal} edge to $(m{+}d\ell{+}1,\,k{+}1)$.
A path from $(0,0)$ to $(N,j)$ in $D$ consists of
$j$ diagonal steps and $N-j(d\ell{+}1)$ horizontal steps,
interleaved in some order. 
The number of such paths is
\[
\binom{N-j(d\ell{+}1)+j}{j} = \binom{N-jd\ell}{j}.
\]
The digraph $D$ is locally finite (two outgoing edges per vertex),
nonnegative (all edge weights equal~$1$), planar and weakly $y$-invariant
(the outgoing edges from $(m,k)$ are independent of~$m$).
By Theorem~\ref{thm:brenti},
the matrix $M_{N,j}\coloneqq P_D\bigl((0,0),(N,j)\bigr)$
is totally nonnegative.
By selecting every $d$th row of $M$ we obtain 
\[
P_{\mu,d,n}(t)=\sum_j M_{dn,j}\,t^j = \sum_j \binom{d(n-j\ell)}{j}\, t^j
\]
which then is real-rooted.
\end{proof}
We conjecture that the tiling polynomials also satisfy $P_{\mu,d,n}(t)\interl P_{\mu,d,n+1}(t)$.

\subsection{Rectangle tilings beyond claw-freeness}\label{sec:rectangleBeyondCF}

Lemma~\ref{lem:clawFree} shows that the conflict graph
for $\mu=(k,k)$ with $d\leq k$ is claw-free.
However, when $d$ exceeds this threshold,
real-rootedness can persist far beyond the point
where interlacing fails.

\begin{example}[Delayed failure for $\mu=(2,2,2)$, $d=3$]\label{ex:222delayed}
Consider $\mu=(2,2,2)$ and $d=3$.
By Lemma~\ref{lem:clawBound}, the conflict graph contains a claw (since $d=3>\mu_{\ell-1}=\mu_2=2$).
The fault-free words are
$(\infty)$,
three single tiles (one per row),
$(1,3,\infty,\infty)$ (row~1 chains to row~3 at distance~1),
and $(1,\infty,3,\infty,\infty)$ (row~1 chains to row~3 at distance~2).
This gives the fault-free generating function
\[
\gB_{(2,2,2),3}(x,t) = x + 3tx^3 + t^2x^4 + t^2x^5,
\]
and the recursion
\begin{equation}\label{eq:222recursion}
P_n(t) = P_{n-1}(t) + 3t\,P_{n-3}(t) + t^2\,P_{n-4}(t) + t^2\,P_{n-5}(t).
\end{equation}
The sequence of tiling polynomials $P_0, P_1, P_2, \dotsc$
exhibits the following behavior:
\begin{itemize}
\item For $n\leq 31$:
the polynomial $P_{n-1}$ interlaces $P_n$
whenever $\deg P_n = \deg P_{n-1}+1$.
\item At $n=32$ ($\deg P_{32}=16$):
$P_{31}$ no longer interlaces $P_{32}$,
but both polynomials are still real-rooted.
\item For $32 \leq n \leq 55$: all $P_n$ remain real-rooted
despite the failure of consecutive interlacing.
\item At $n = 56$ ($\deg P_{56}=28$): $P_{56}(t)$ has a pair
of non-real roots, and real-rootedness fails.
\end{itemize}
In particular, real-rootedness persists
for $24$ additional steps after consecutive interlacing breaks down.
\end{example}

Computations for broad families of Ferrers shapes
raise the following problem.

\begin{problem}\label{prob:mu1geqd}
Determine which Ferrers tiles $\mu=(\mu_0,\mu_1,\dotsc,\mu_\ell)$
and strip heights~$d$ have the property that
the tiling polynomials $P_{\mu,d,n}(t)$ are real-rooted for all~$n$.
Which of these families also form a consecutive interlacing sequence,
\[
P_{\mu,d,n-1}(t) \interl P_{\mu,d,n}(t)
\quad\text{for all } n\geq 1?
\]
\end{problem}
Computations suggest that the condition $\mu_1\geq d$ is often favorable:
it has been checked for all Ferrers tiles with $\mu_1\geq d$,
up to $6$ columns and $d\leq 7$, for $n$ up to~$40$.
Note that when $\ell \geq 3$ and $\mu_{\ell-1} < \mu_1$,
the condition $\mu_1 \geq d$ is compatible with $d\geq \mu_{\ell-1}+1$,
so by Lemma~\ref{lem:clawBound} the conflict graph may contain a claw.
Thus real-rootedness and interlacing may still hold
beyond the claw-free regime.
For instance, $\mu=(5,4,3,1)$ with $d=4$ satisfies $\mu_1=4\geq d$,
yet the conflict graph contains a claw since $d=4\geq \mu_2+1=4$.

The next result may be viewed as a fat-L analogue of Theorem~\ref{thm:genFunc}.
\begin{theorem}\label{thm:fatL}
Let $d\geq 1$ and set
\[
\mu=(\underbrace{d,\dotsc,d}_{r},\underbrace{1,\dotsc,1}_{s})
\]
for some $r,s\geq 1$.
Then
\begin{equation}\label{eq:BfatL}
\gB_{\mu,d}(x,t)=x+x^{s}\frac{(1+tx^r(1+x+\dotsb+x^{s-1}))^d-1}{(1+x+\dotsb+x^{s-1})}.
\end{equation}
\end{theorem}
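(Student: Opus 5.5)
The plan is to mirror the proof of Theorem~\ref{thm:genFunc} and Proposition~\ref{prop:BmudRigid}. I would first determine exactly which gaps between consecutive anchors can occur in a fault-free word for $\mu=(d^r,1^s)$. I would then count fault-free words by number of tiles, and finally sum the resulting geometric-type series.

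\textbf{Step 1: admissible offsets.} Here $\ell=r+s-1$, with $\mu_k=d$ for $1\le k\le r-1$ and $\mu_k=1$ for $r\le k\le \ell$. If $a_i\neq\infty$ and $1\le k\le r-1$, condition \eqref{anchorwordinequalites} forces $a_{i+k}\ge a_i+d>d$, so $a_{i+k}=\infty$. For $r\le k\le\ell$, it only forces $a_{i+k}\ge a_i+1$ whenever $a_{i+k}$ is finite.

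Now take a fault-free word other than $(\infty)$. It starts with an integer and contains no run of $\ell$ consecutive $\infty$'s before its final run. So consecutive finite letters sit at distance $o\le \ell$, and by the above $o\in\{r,r+1,\dots,r+s-1\}$, which gives exactly $s$ choices. Since each such consecutive pair is within distance $\ell$, its rows strictly increase. For non-consecutive anchors, the distance is either greater than $\ell$ (no constraint) or at least $r$ (only a strict increase is required), and the latter holds by transitivity.

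Conversely, I would check that any choice of $k\ge1$ strictly increasing rows in $\{1,\dots,d\}$, together with $k-1$ offsets in $\{r,\dots,r+s-1\}$ and a final $\ell$ infinities, gives a valid fault-free word. It is valid because all inequalities reduce to the ones just discussed. It is fault-free because every internal gap of $\infty$'s has length at most $\ell-1$.

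\textbf{Step 2: counting.} A word with $k$ tiles and offsets $o_1,\dots,o_{k-1}$ has length $1+\sum_j o_j+\ell = r+s+\sum_j o_j$ and weight $t^k$. The rows contribute $\binom dk$. Each offset contributes $x^r(1+x+\dots+x^{s-1})=:U$. Hence
\[
\gB_{\mu,d}(x,t)=x+\sum_{k=1}^d\binom dk t^k x^{r+s}U^{k-1}=x+x^{r+s}\frac{(1+tU)^d-1}{U}.
\]
Cancelling $x^r$ against $U$ gives exactly \eqref{eq:BfatL}.

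\textbf{Main obstacle.} The only delicate point is Step 1: verifying that the fault-free condition, stated as the absence of a run of $\ell$ $\infty$'s, exactly matches offsets at most $\ell$. I also need to confirm that constraints between non-adjacent anchors impose nothing beyond monotonicity of rows. The degenerate cases $r=1$, where there are no height-$d$ columns beyond the first, and $d=1$, where only single tiles occur, should be checked separately, though the formula handles them uniformly.
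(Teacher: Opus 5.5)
Your proposal is correct and follows essentially the paper's argument: restrict the offset between consecutive anchors to $\{r,\dots,r+s-1\}$, note that the anchor rows must strictly increase so that $k$ tiles give $\binom{d}{k}$ choices, and sum to the closed form. Deriving the offsets directly from the anchor-word inequalities, and checking the non-consecutive anchors and the converse, makes your version more careful than the paper's geometric overlap description.
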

\begin{proof}
Write
\[
S_{r,s}=x^r+x^{r+1}+\dotsb+x^{r+s-1}.
\]
The first part follows the same pattern as Theorem~\ref{thm:genFunc}.
The tile has a block of $r$ columns of height~$d$, followed by a tail of
$s$ columns of height~$1$.
Consider two consecutive anchors in a fault-free word.
Let $b$ be the number of columns in which the tail of the first tile overlaps
the height-$d$ block of the second tile.
To avoid a vertical fault between the two tiles, this overlap must be nonempty;
it also cannot exceed the tail length~$s$.
Hence $1\leq b\leq s$, and each such value is feasible.
If the overlap is $b$, then the horizontal offset between the two anchors is
\[
(r+s)-b.
\]
Thus the possible offsets are $r,r+1,\dotsc,r+s-1$.
The overlap value $b$ therefore contributes $x^{r+s-b}$, so summing over all
possible overlaps gives
\[
x^r+x^{r+1}+\dotsb+x^{r+s-1}=S_{r,s}.
\]
Moreover, every overlap $1,\dotsc,s$ imposes the same condition on the anchor
rows: the anchor rows must be distinct, and their vertical order is forced by
their horizontal order.
Thus, if a fault-free word contains $k\geq 1$ big tiles, its anchor rows are
specified by choosing a $k$-element subset of the $d$ possible anchor rows,
which gives $\binom{d}{k}$ choices.
The first big tile contributes width $r+s$, and the remaining $k-1$ tiles
contribute the offsets above. Thus
\[
\gB_{\mu,d}(x,t)
= x+\sum_{k=1}^{d}\binom{d}{k}t^k x^{r+s}S_{r,s}^{k-1}
=x+x^{r+s}\frac{(1+tS_{r,s})^d-1}{S_{r,s}}.
\]
\end{proof}

This family also shows that the condition $\mu_1\geq d$ is not
sufficient on its own. For the fat-L tile with $d=5$, $r=2$, and $s=3$,
that is $\mu=(5,5,1,1,1)$, the polynomial $P_{\mu,d,37}(t)$
is not real-rooted. Similarly, for $d=2$, $r=2$, and $s=4$,
that is $\mu=(2,2,1,1,1,1)$, real-rootedness fails at $n=98$.

\subsection{A fat-L failure search}\label{sec:fatLFailureSearch}

For the fat-L family of Theorem~\ref{thm:fatL}, write
\[
F_{d,r,s}(x,t)
=\frac{1+x+\dotsb+x^{s-1}}
{1-x^s(1+t(x^r+x^{r+1}+\dotsb+x^{r+s-1}))^d}
\]
and
\[
P_{d,r,s,n}(t)=[x^n]F_{d,r,s}(x,t).
\]
In analyzing Problem \ref{prob:mu1geqd} one has to be careful; 
we searched the box
\[
1\leq d\leq 10,\qquad 2\leq r\leq 10,\qquad
1\leq s\leq 200,\qquad 1\leq n\leq 200.
\]
Table~\ref{tab:fatLFailureSearch} records the first failures found.
An entry $s\to n$ means that $s$ is the smallest value
for which $P_{d,r,s,n}(t)$ is not real-rooted, for some $n \leq 200$. 
The symbol $\mathrm{NF}$ means that no failure was found in the search box.

\begin{table}[!ht]
\centering
\caption{First non-real-rooted fat-L tiling polynomials in the search box
$1\leq d\leq 10$, $2\leq r\leq 10$, $s,n\leq 200$.}
\label{tab:fatLFailureSearch}
{\scriptsize
\setlength{\tabcolsep}{3.5pt}
\renewcommand{\arraystretch}{1.15}
\begin{tabular}{@{}c*{9}{c}@{}}
\toprule
$d\backslash r$ & $2$ & $3$ & $4$ & $5$ & $6$ & $7$ & $8$ & $9$ & $10$ \\
\midrule
$1$ & $\mathrm{NF}$ & $\mathrm{NF}$ & $\mathrm{NF}$ & $\mathrm{NF}$ & $\mathrm{NF}$ & $\mathrm{NF}$ & $\mathrm{NF}$ & $\mathrm{NF}$ & $\mathrm{NF}$ \\
$2$ & $4\to98$ & $5\to190$ & $8\to164$ & $11\to192$ & $15\to195$ & $22\to189$ & $29\to198$ & $\mathrm{NF}$ & $\mathrm{NF}$ \\
$3$ & $5\to111$ & $7\to161$ & $10\to175$ & $13\to196$ & $19\to186$ & $25\to189$ & $34\to188$ & $57\to185$ & $\mathrm{NF}$ \\
$4$ & $3\to88$ & $9\to164$ & $12\to190$ & $18\to182$ & $24\to185$ & $30\to200$ & $45\to183$ & $54\to193$ & $\mathrm{NF}$ \\
$5$ & $3\to37$ & $11\to174$ & $15\to199$ & $22\to160$ & $27\to196$ & $34\to158$ & $39\to182$ & $\mathrm{NF}$ & $\mathrm{NF}$ \\
$6$ & $3\to28$ & $4\to66$ & $7\to147$ & $11\to195$ & $15\to200$ & $19\to185$ & $31\to166$ & $35\to188$ & $\mathrm{NF}$ \\
$7$ & $3\to17$ & $4\to47$ & $5\to132$ & $8\to125$ & $10\to196$ & $13\to175$ & $16\to200$ & $20\to171$ & $22\to190$ \\
$8$ & $3\to17$ & $4\to28$ & $5\to74$ & $6\to184$ & $9\to160$ & $11\to185$ & $14\to153$ & $16\to166$ & $18\to186$ \\
$9$ & $3\to17$ & $4\to28$ & $5\to78$ & $6\to102$ & $8\to172$ & $10\to144$ & $12\to161$ & $14\to183$ & $16\to197$ \\
$10$ & $3\to17$ & $4\to28$ & $5\to45$ & $6\to107$ & $8\to127$ & $9\to160$ & $11\to172$ & $13\to189$ & $16\to190$ \\
\bottomrule
\end{tabular}}
\end{table}

\subsection{Counting fault lines}\label{sec:faultLines}

\begin{definition}
It is natural to also count the number of fault lines rather
than the number of big tiles. To that end we introduce the generating function:
\begin{equation} \label{genfuncfaultlines}
\defin{\gG_{\mu,d}(x,t)}\coloneqq \sum_{n\geq0}\sum_{w\in \anchor_{\mu}(d,n)} t^{\fl(w)}x^n
= \sum_{n\geq0} \defin{R_{\mu,d,n}(t)}x^n.
\end{equation}
\end{definition}
Using the same argument as above, we have
\begin{equation}
\gG_{\mu,d} (x,t) = 1+\frac{\gB_{\mu,d}(x,1)}{1-t\cdot \gB_{\mu,d}(x,1)}
\end{equation}
where $\gB_{\mu,d}(x,t)$ is the same as earlier. 
This follows from the fact that a concatenation of $m\geq 1$
fault-free sub-words has exactly $m-1$ fault lines, while the empty word
contributes the initial~$1$.

This formula gives a useful application of Theorem~\ref{thm:brandenleite}.
If $\gB_{\mu,d}(x,1)$ has only real zeros, then
\[
\frac{1}{1-t\cdot\gB_{\mu,d}(x,1)}
\]
has real-rooted coefficient polynomials in~$x$.
For $n\geq 1$, the coefficient of $x^n$ in this series is
$tR_{\mu,d,n}(t)$, so each $R_{\mu,d,n}(t)$ is real-rooted.

\begin{example}
For $\mu=(3,1,1)$, $d=2$ and $n=3$,
the polynomial $R_{\mu,d,n}(t)$ is $2+t^2$.
\end{example}

Similar to Theorem~\ref{thm:twoColumnRealRooted},
the corresponding fault-line generating polynomials are also real-rooted:
\begin{theorem}
For $d\leq k$ set $R_n(t) \coloneqq R_{(k,k),d,n} (t)$.
Then we have
\begin{equation} 
R_n (t)=t \cdot R_{n-1} (t) + d\cdot t \cdot R_{n-2} (t)
\iff 
\begin{bmatrix}
R_{n-1} \\
R_{n}
\end{bmatrix}
=
\begin{bmatrix}
0 & 1 \\
dt & t
\end{bmatrix}
\begin{bmatrix}
R_{n-2} \\
R_{n-1}
\end{bmatrix},
\end{equation}
and all the $R_n$ have only real zeros.
\end{theorem}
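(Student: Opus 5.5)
The plan is to first derive the recursion from the fault-line generating function, and then get real-rootedness in the same way as in Theorem~\ref{thm:twoColumnRealRooted}, via Theorem~\ref{thm:ftof}. I would also cross-check the result with Theorem~\ref{thm:brandenleite}.

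\textbf{Step 1: fault-free words at $t=1$.} By the proof of Theorem~\ref{thm:twoColumnRealRooted}, for $\mu=(k,k)$ with $d\leq k$ the only fault-free words are $(\infty)$ and the $d$ words consisting of one tile followed by a trailing $\infty$. Hence $\gB_{(k,k),d}(x,1)=x+dx^2$.

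\textbf{Step 2: the recursion.} Write $H=\gB/(1-t\gB)$, where $\gB=\gB_{(k,k),d}(x,1)$. The identity $\gG_{\mu,d}=1+H$ gives $R_n=[x^n]H$ for $n\geq 1$. Rewrite $H$ as $H=\gB+t\gB H$ and extract coefficients:
\[
R_n=[x^n]\gB+t\bigl(R_{n-1}+dR_{n-2}\bigr),
\]
where $[x^0]H=0$ is used in the lowest terms. For $n\geq 3$ the term $[x^n]\gB$ vanishes, so $R_n=tR_{n-1}+dtR_{n-2}$.

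The initial values are $R_1=1$ and $R_2=d+t$. A direct check: the word $\infty\infty$ has one fault line, and the $d$ single-tile words have none. With the convention $R_0=1$ the recursion fails at $n=2$, since it would give $t+dt$. So I will state the recursion for $n\geq 3$, with the initial pair $(R_1,R_2)$. The matrix form is the same identity written as a transfer step from $(R_{n-2},R_{n-1})$ to $(R_{n-1},R_n)$.

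\textbf{Step 3: real-rootedness via Theorem~\ref{thm:ftof}.} The transfer matrix $\left[\begin{smallmatrix}0&1\\ dt&t\end{smallmatrix}\right]$ has nonnegative coefficients, and its only $2\times 2$ submatrix is itself. The condition of Theorem~\ref{thm:ftof} then reads
\[
(\lambda t+\mu)\cdot 1+t\ \interl\ (\lambda t+\mu)\cdot 0+dt,
\qquad\text{that is,}\qquad
(\lambda+1)t+\mu\ \interl\ dt.
\]
Both sides are linear with positive leading coefficients. Their zeros satisfy $-\mu/(\lambda+1)<0$, so the left side alternates left of the right side, and the condition holds for all $\lambda,\mu>0$.

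The initial pair $(1,d+t)$ lies in $\mathscr{F}_2^+$, since a constant interlaces a linear polynomial with a real zero. By induction, every pair $(R_{n-1},R_n)$ is interlacing, so each $R_n$ is real-rooted.

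\textbf{Cross-check.} As a sanity check, Theorem~\ref{thm:brandenleite} applies with $f(x)=x+dx^2=x(1+dx)$, which has nonnegative coefficients and real zeros. It gives real-rootedness of $tR_n(t)$ for $n\geq 1$, hence of $R_n(t)$.

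\textbf{Main obstacle.} The only delicate point is the indexing at the start. The constant term $1$ in $\gG$ and the nonzero $[x^2]\gB$ mean the recursion holds only from $n=3$ onward. I would therefore start the induction at the pair $(R_1,R_2)$ rather than at $(R_0,R_1)$.
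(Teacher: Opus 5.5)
Your proposal is correct and follows essentially the paper's route: you get the recursion from the fault-line generating function, then run the same transfer-matrix check via Theorem~\ref{thm:ftof} as in Theorem~\ref{thm:twoColumnRealRooted}, and your Br\"and\'en--Leite cross-check is exactly the application the paper points out right after the fault-line formula. Your indexing caveat is right and sharper than the paper: from the definition $R_1=1$ and $R_2=d+t$, so the recursion holds only for $n\geq 3$ (starting from the pair $(R_1,R_2)$), and the paper's remark $R_n(t)=t^nP_n(t^{-1})$ should read $R_n(t)=t^{n-1}P_n(t^{-1})$ for $n\geq 1$.
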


Alternatively, we have that $R_{(k,k),d,n} (t)= t^n \cdot P_{(k,k),d,n} (t^{-1})$, 
since every fault line corresponds to a column without an 
anchor square in the case the big tile occupies two columns.

\subsection{Two-column fault-free words}

\begin{example}
In the case $\mu=(2,2)$ and $d=3$ we get 5 fault-free tilings, as shown below (remember that there is at most one anchor square in each column):
\begin{center}
\begin{tikzpicture}[scale=0.4,baseline=(current bounding box.center)]
\draw[fill=softgreen,line width= 1pt] (0,0) rectangle (1,1);
\draw[fill=softgreen,line width= 1pt] (0,0) rectangle (1,2);
\draw[fill=softgreen,line width= 1pt] (0,0) rectangle (1,3);
\draw[fill=softgreen,line width= 1pt] (0,0) rectangle (1,4);
\draw[step=1cm, thin] (0,0) grid (1,4);
\end{tikzpicture}
\begin{tikzpicture}[scale=0.4,baseline=(current bounding box.center)]
\draw[fill=peachpink,line width= 1pt] (0,0) rectangle (2,2);
\draw[step=1cm, thin] (0,0) grid (2,4);
\end{tikzpicture}
\begin{tikzpicture}[scale=0.4,baseline=(current bounding box.center)]
\draw[fill=peachpink,line width= 1pt] (0,1) rectangle (2,3);
\draw[step=1cm, thin] (0,0) grid (2,4);
\end{tikzpicture}
\begin{tikzpicture}[scale=0.4,baseline=(current bounding box.center)]
\draw[fill=peachpink,line width= 1pt] (0,2) rectangle (2,4);
\draw[step=1cm, thin] (0,0) grid (2,4);
\end{tikzpicture}
\begin{tikzpicture}[scale=0.4,baseline=(current bounding box.center)]
\draw[fill=skyblue,line width= 1pt] (0,0) rectangle (2,2);
\draw[fill=skyblue,line width= 1pt] (1,2) rectangle (3,4);
\draw[step=1cm, thin] (0,0) grid (3,4);
\end{tikzpicture}
\end{center}
The generating function for the number of tilings is (see \oeis{A097076}),
\begin{equation}
 \frac{1}{1-(x+3x^2+x^3)}.
\end{equation}
This sequence is also studied in \cite{Bodeen2014}, where they consider tilings of a strip of triangles. They have the following five sub-tilings corresponding to the fault-free tilings above. 
\begin{center}
 \includegraphics[scale=0.625]{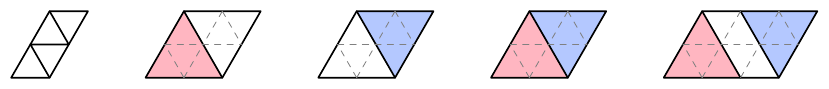}
\end{center} 
\end{example}

\begin{lemma}\label{lem:countingFaultFreeKK}
Let $\mu = (k,k)$.
The number of fault-free anchor words in $\faultfree_\mu (d)$, 
containing exactly $m$ big tiles, is equal to the number of 
integer solutions to the inequalities:
\begin{equation}
 x_1+x_2+x_3+ \cdots + x_m \leq d 
 \text{ where } x_1\geq 1 \text{ and } x_2,x_3, \cdots, x_m \geq k.
\end{equation}
This implies that
\begin{equation}
\gB_{\mu,d}(x,t)= x+\sum_{m\geq1} \binom{d+k+m-km-1}{m} t^m x^{m+1}.
\end{equation}
\end{lemma}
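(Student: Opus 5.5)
The plan is to turn the fault-free words for $\mu=(k,k)$ into sequences of consecutive differences of anchor rows, and then count those by stars and bars.

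\textbf{Step 1: shape of a fault-free word.} Here $\ell=1$. By the definition of $\faultfree_\mu(d)$, a fault-free word is either $(\infty)$, which gives the term $x$, or it starts with an integer and contains exactly one $\infty$, placed at the very end. So a fault-free word with $m\geq 1$ big tiles has the form $a_1,a_2,\dotsc,a_m,\infty$. It has width $m+1$ and $\bt=m$, which accounts for the monomial $t^mx^{m+1}$.

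\textbf{Step 2: the constraints.} The conditions in \eqref{anchorwordinequalites} with $\mu_1=k$ say exactly that $a_{i+1}\geq a_i+k$ for $i=1,\dotsc,m-1$, together with $1\leq a_i\leq d$. Conversely, any such sequence followed by $\infty$ is a valid anchor word. It is fault-free because tiles anchored in adjacent columns share a column, so no vertical fault line can separate them.

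\textbf{Step 3: the bijection.} Set $x_1=a_1$ and $x_j=a_j-a_{j-1}$ for $2\leq j\leq m$. This gives a bijection onto integer solutions of $x_1\geq 1$, $x_j\geq k$ for $j\geq 2$, and $x_1+\dotsb+x_m=a_m\leq d$. The upper bound $a_m\leq d$ is the only remaining restriction, because the $a_i$ are increasing and $a_1\geq 1$. This proves the first claim of the lemma.

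\textbf{Step 4: the count.} Substitute $y_1=x_1-1$ and $y_j=x_j-k$ for $j\geq 2$, and add a slack variable $y_{m+1}\geq 0$. We then need the number of nonnegative solutions of
\[
y_1+\dotsb+y_{m+1}=d-1-k(m-1).
\]
By stars and bars this number is $\binom{d-1-k(m-1)+m}{m}=\binom{d+k+m-km-1}{m}$. Summing over $m$ with weight $t^mx^{m+1}$ and adding $x$ gives the stated formula for $\gB_{\mu,d}$.

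The only delicate point is the range of $m$. When $d-1-k(m-1)<0$ there are no solutions, so the binomial coefficient must be read as $0$. With the top entry $d+k+m-km-1$, this holds when the top is nonnegative and less than $m$. For larger $m$ the top entry becomes negative, and there I would restrict the sum explicitly to $m\leq 1+\lfloor (d-1)/k\rfloor$, in agreement with Lemma~\ref{recurrence-length}, or state the convention that the binomial vanishes there.
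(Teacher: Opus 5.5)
Your proposal is correct and follows essentially the same route as the paper, which also places the $j$-th anchor in row $x_1+\dots+x_j$ and then counts solutions by stars and bars. Your remark about the range of $m$ is a legitimate point the paper leaves implicit. For $k\geq 2$ and $m$ large, the top entry $d+k+m-km-1$ becomes negative, so the stated formula only holds if $\binom{a}{m}$ is read as $0$ for $a<0$ or the sum is truncated at $m\leq 1+\lfloor (d-1)/k\rfloor$.
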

\begin{proof}
 Given a 
 solution to the inequalities above, we place an anchor square in column $j$ and row $x_1+x_2+\dots+x_j$. 
It is straightforward to verify that this is a bijection---
a word in $\faultfree_\mu (d)$, with $m$ big tiles has width $m+1$.
Now counting solutions can be done with the standard bars-and-stars technique, see for example \cite[p.25]{Stanley2011}.
Moreover, unimodality of $\gB_{\mu,d}(x,1)-x$ can be 
proved by showing that the sequence of coefficients $\binom{d+k+m-km-1}{m}$
is log-concave in $m$ for fixed $d$ and $k$.
This can be done by writing the binomial coefficients in terms of 
factorials and verifying the log-concavity inequality.
\end{proof}

\begin{lemma}\label{lem:BdRectRec}
  Fix $k$ and set $\gB_d \coloneqq \gB_{(k,k),d} (1,t)$.
  Then $\gB_d = \gB_{d-1} + t \cdot \gB_{d-k}$ whenever $d\geq k$,
  with initial values
  \begin{equation}
      \gB_d = 1+dt \quad \text{whenever } d \leq k.
  \end{equation}
  Moreover, $\gB_{d-1}\interl \gB_d$ for all $d\geq 1$.
  Consequently, all the $\gB_d (t)$ have only real zeros.
\end{lemma}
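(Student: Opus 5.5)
The plan has two parts: first derive the recursion combinatorially from Lemma~\ref{lem:countingFaultFreeKK}, then prove the stronger statement that $(\gB_0,\gB_1,\dotsc,\gB_d)$ is an interlacing sequence, by induction on $d$.

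\textbf{The recursion.} By Lemma~\ref{lem:countingFaultFreeKK}, a fault-free word for $\mu=(k,k)$ other than $(\infty)$ is determined by its anchor rows $r_1<r_2<\dots<r_m$, placed in consecutive columns, with $r_1\geq 1$, $r_{j+1}-r_j\geq k$ and $r_m\leq d$. The word $(\infty)$ contributes $1$, and each tile contributes a factor $t$ (we set $x=1$).

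\emph{Initial values.} If $d\leq k$, then $r_2\geq 1+k>d$, so only single tiles occur. Hence $\gB_d=1+dt$, and in particular $\gB_0=1$.

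\emph{Recursion for $d\geq k$.} Split the words according to whether the top anchor row $r_m$ equals $d$.
\begin{itemize}
\item[(i)] If $r_m<d$, or the word is $(\infty)$, these are exactly the words counted by $\gB_{d-1}$.
\item[(ii)] If $r_m=d$, delete the last tile. The remaining rows satisfy $r_{m-1}\leq d-k$, so the truncated word lies in $\faultfree_\mu(d-k)$. The single tile at row $d$ corresponds to the word $(\infty)$, which is allowed since $d-k\geq 0$. This deletion is a bijection onto $\faultfree_\mu(d-k)$ and costs one factor $t$.
\end{itemize}
Together these give $\gB_d=\gB_{d-1}+t\gB_{d-k}$. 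As a check, at $d=k$ this gives $\gB_k=(1+(k-1)t)+t\cdot 1$, matching the initial values.

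\textbf{Interlacing by induction.} I would prove that $\gB_i\interl\gB_j$ for all $0\leq i<j\leq d$, and that every $\gB_j$ is real-rooted with nonpositive zeros. Nonpositivity of the zeros is automatic, since all coefficients are nonnegative.

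\emph{Base case $d\leq k$.} The polynomials $1,1+t,\dotsc,1+dt$ are pairwise interleaving: a constant interlaces a linear polynomial, and two linear polynomials with roots $-1/i$ and $-1/j$ satisfy $\gB_i\interl\gB_j$ for $i<j$ because $-1/i<-1/j$.

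\emph{Inductive step, key step.} Apply Theorem~\ref{thm:liuwang} with
\[
F=\gB_d,\qquad f=\gB_{d-1},\qquad g_1=\gB_{d-k},\qquad a=1,\qquad b_1=t.
\]
By the induction hypothesis $g_1\interl f$. The degree condition holds because $\deg\gB_d\in\{\deg\gB_{d-1},\deg\gB_{d-1}+1\}$; this follows from $\deg\gB_{d-k}\leq\deg\gB_{d-1}$. The leading coefficients are positive. Finally $b_1(r)=r\leq 0$ at every zero $r$ of $f$, because those zeros are nonpositive. Theorem~\ref{thm:liuwang} then gives that $\gB_d$ is real-rooted and $\gB_{d-1}\interl\gB_d$.

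\emph{Remaining pairs.} To get $\gB_i\interl\gB_d$ for every $i<d-1$, I would use two standard facts (see \cite{branden2015unimodality}, or Wagner):
\begin{itemize}
\item[(a)] For fixed $h$, the set $\{p : h\interl p\}$ is a convex cone.
\item[(b)] If $g\interl f$ and both have nonnegative coefficients, then $f\interl tg$.
\end{itemize}
We already have $\gB_i\interl\gB_{d-1}$ from the induction hypothesis. We also need $\gB_i\interl t\gB_{d-k}$:
\begin{itemize}
\item If $i\leq d-k$, this follows from $\gB_i\interl\gB_{d-k}$ (or equality when $i=d-k$), together with the fact that multiplying by $t$ appends a zero at $0$, to the right of all other zeros.
\item If $i>d-k$, it follows from $\gB_{d-k}\interl\gB_i$ and fact (b).
\end{itemize}
Summing via the cone property (a) then gives $\gB_i\interl\gB_{d-1}+t\gB_{d-k}=\gB_d$.

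\textbf{Main obstacle.} The delicate point is the bookkeeping in the cone argument. The conventions of $\interl$ mix ``interlaces'' and ``alternates left of'', so the degrees must be tracked. One also has to handle common zeros at $0$, and the case $i=d-k$. If this becomes messy, a fallback is available. $\gB_d$ is the independence polynomial of the unit interval graph on $\{1,\dots,d\}$ with edges $|i-j|<k$. This graph is claw-free: three pairwise non-adjacent neighbours of a vertex $v$ would have to lie in a window of length $2k-2$ around $v$ and be pairwise at distance at least $k$, which is impossible. Removing the vertex $d$ gives the same recursion, and the Chudnovsky--Seymour vertex-deletion interlacing yields $\gB_{d-1}\interl\gB_d$ directly.
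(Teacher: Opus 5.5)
Your derivation of the recursion (splitting on whether row $d$ carries an anchor) matches the paper's, and your Liu--Wang step is correct. The gap is the inductive invariant. You set out to maintain $\gB_i\interl\gB_j$ for \emph{all} $0\le i<j\le d$, and this is false, so the ``remaining pairs'' step cannot go through. Under the paper's definition, $f\interl g$ forces $\deg g\in\{\deg f,\deg f+1\}$. But $\deg\gB_j=\lceil j/k\rceil$ grows without bound. Already at $d=k+1$ you would need $1=\gB_0\interl\gB_{k+1}$ with $\deg\gB_{k+1}=2$; for $k=2$ this is $1\interl 1+3t+t^2$, which fails. The faulty ingredient is your first bullet: $f\interl g$ does not imply $f\interl tg$, because appending a zero at $0$ to $g$ destroys the alternation when $\deg f=\deg g$. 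For instance, with $k=2$ you have $\gB_1=1+t\interl\gB_2=1+2t$, but $1+t\not\interl t(1+2t)$, since $-1$ does not lie between $-\tfrac12$ and $0$. Your Liu--Wang step draws $\gB_{d-k}\interl\gB_{d-1}$ from the induction hypothesis, so as stated it has nothing valid to rest on.

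The fix is to carry only a window: $\gB_i\interl\gB_j$ whenever $0\le j-i\le k-1$. Within such a window degrees differ by at most one, and the base window $1,1+t,\dotsc,1+(k-1)t$ is fine. This is exactly what Liu--Wang consumes, since $\gB_{d-k}$ and $\gB_{d-1}$ are $k-1$ apart. For the new pairs $(i,d)$ with $d-k<i<d$ you only need your second bullet. From $\gB_{d-k}\interl\gB_i$, fact (b) gives $\gB_i\interl t\gB_{d-k}$. The cone property together with $\gB_i\interl\gB_{d-1}$ then yields $\gB_i\interl\gB_d$; the case $i=d-1$ even makes Liu--Wang redundant. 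Repaired this way, your argument is an unpacked version of the paper's proof. The paper applies Theorem~\ref{thm:ftof} to the $k\times k$ companion matrix acting on a window of $k$ consecutive $\gB$'s. For membership in $\mathscr{F}_k^+$, that window must be listed in increasing index order $(\gB_{d-k+1},\dotsc,\gB_d)$, since e.g.\ $1+2t\not\interl 1+t$. Your claw-free fallback is a genuinely different and valid route: $\gB_d$ is the independence polynomial of the $(k-1)$st power of the path on $\{1,\dotsc,d\}$, and $d$ is a simplicial vertex. However, it needs the interlacing strengthening $I(G\setminus v)\interl I(G)$ for claw-free $G$, not just the real-rootedness statement of Chudnovsky--Seymour that the paper quotes.
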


\begin{proof}
    Consider a fault-free tiling counted by $\gB_d$.
    If row $d$ does not contain an anchor square,
    this is a fault-free tiling with $d-1$ anchor rows, counted by $\gB_{d-1}$.
    If row $d$ does contain an anchor square,
    the remaining tiles must have anchor rows $\leq d-k$
    (since consecutive anchor rows in a $(k,k)$-tiling differ by at least~$k$),
    giving the term $t\cdot\gB_{d-k}$.

    For the interlacing, we write the recursion in matrix form.
    The vector
    \[
    (\gB_d,\gB_{d-1},\dotsc,\gB_{d-k+1})^T
    \]
    is obtained from
    $(\gB_{d-1},\gB_{d-2},\dotsc,\gB_{d-k})^T$
    by the $k{\times}k$ transfer matrix
    \[
    M = \begin{bmatrix}
    1 & 0 & \cdots & 0 & t \\
    1 & 0 & \cdots & 0 & 0 \\
    0 & 1 & \cdots & 0 & 0 \\
    \vdots & \vdots & \ddots & \vdots & \vdots \\
    0 & 0 & \cdots & 1 & 0
    \end{bmatrix}.
    \]
    The initial values $\gB_d = 1+dt$ for $d\leq k$ clearly form an interlacing sequence
    since $1+(d{-}1)t \interl 1+dt$.
    To show that the matrix $M$ preserves interlacing sequences
    (i.e.\ $M:\mathscr{F}_k^+\to \mathscr{F}_k^+$),
    we verify the conditions of Theorem~\ref{thm:ftof}.
    All entries of $M$ have nonnegative coefficients.
    The $2{\times}2$ sub-matrices of $M$ have the form
    $\left[\begin{smallmatrix} a & b \\ c & d \end{smallmatrix}\right]$
    where $\{a,b,c,d\}\subseteq\{0,1,t\}$ and at most one entry equals~$t$.
    In each case, the condition $(\lambda t + \mu)b + d \interl (\lambda t + \mu)a + c$
    reduces to checking interlacing between polynomials of degree at most~$2$
    with nonnegative coefficients, which is straightforward.
\end{proof}

\begin{problem}
For which Ferrers tiles $\mu$ and parameters $d$ are all coefficient
polynomials $[x^n]\gB_{\mu,d}(x,t)$ real-rooted in $t$?
We have established this for two-column shapes $\mu=(k,k)$
(Lemma~\ref{lem:countingFaultFreeKK} and Lemma~\ref{lem:BdRectRec}),
but the question is open for wider tiles.
\end{problem}

In Section~\ref{sec:dense}, we study
properties of $\gB_{\mu,d}(x,t)-x$ a bit further.
Questions about unimodality and log-concavity are a well-studied topic in
algebraic combinatorics, see the survey by Stanley~\cite{Stanley1989}.

\begin{problem}
For which $\mu$ and $d$ is $\gB_{\mu,d}(x,1)-x$ unimodal?
When are there no internal zeros?

We answered this question for hooks, the case $\mu=(k,k)$
is proved in Lemma~\ref{lem:countingFaultFreeKK}.
\end{problem}

\subsection{Fault lines in dense tilings}\label{sec:dense}
\begin{definition}
    A tiling is called \defin{dense} if there are no empty columns in the tiling. 
    The generating function for dense tilings is then
	    \begin{equation}
	     \gD_{\mu,d}(x,t) = \frac{1}{1+x-\gB_{\mu,d}(x,t)}
	    \end{equation}
	    by disallowing the empty column in \eqref{genfuncformula}. 

	    We will now focus on the closely related fault-line-refined dense
	    generating function $\gH_{\mu,d}(x,s)$ defined by 
	    \begin{equation}
	     \defin{\gH_{\mu,d}(x,s)} \coloneqq \frac{1}{1-s(\gB_{\mu,d}(x,1)-x)} = \sum_{n\geq 0}x^n \sum_{T} s^{\mathrm{faultlines}(T)+1},
	    \end{equation}
    where the second sum ranges over all \emph{dense} tilings of the $(\mu_{0}+d-1) {\times} n$ board. 
\end{definition}

\begin{example}
Let us consider $\mu=(2,1)$ and $d=2$. Then $\gH_{21,2}(x,s)=\frac{1}{1-s(2x^2+x^3)}$. If we let $Q_{n}(s) \coloneqq [x^n]\, \gH_{21,2}(x,s)$, then 
 \begin{equation} \label{eq:denserecursion}
    Q_{n}(s)=2sQ_{n-2}(s)+sQ_{n-3}(s),
\end{equation}
for example $Q_{6}(s)=8s^3+s^2$ as seen in Figure \ref{fig:densetilings}. 

\begin{figure} [!ht]
    \centering
\begin{tikzpicture}[scale=0.35]
\begin{scope}[shift={(0,0)}]
\draw[step=1cm,thin] (0,0) grid (6,3);
\drawLoutline{0}{0}
\drawLoutline{2}{0}
\drawLoutline{4}{0}
\end{scope}

\begin{scope}[shift={(7,0)}]
\draw[step=1cm,thin] (0,0) grid (6,3);
\drawLoutline{0}{0}
\drawLoutline{2}{0}
\drawLoutline{4}{1}
\end{scope}

\begin{scope}[shift={(14,0)}]
\draw[step=1cm,thin] (0,0) grid (6,3);
\drawLoutline{0}{0}
\drawLoutline{2}{1}
\drawLoutline{4}{0}
\end{scope}

\begin{scope}[shift={(21,0)}]
\draw[step=1cm,thin] (0,0) grid (6,3);
\drawLoutline{0}{0}
\drawLoutline{2}{1}
\drawLoutline{4}{1}
\end{scope}

\begin{scope}[shift={(0,-5)}]
\draw[step=1cm,thin] (0,0) grid (6,3);
\drawLoutline{0}{1}
\drawLoutline{2}{0}
\drawLoutline{4}{0}
\end{scope}

\begin{scope}[shift={(7,-5)}]
\draw[step=1cm,thin] (0,0) grid (6,3);
\drawLoutline{0}{1}
\drawLoutline{2}{0}
\drawLoutline{4}{1}
\end{scope}

\begin{scope}[shift={(14,-5)}]
\draw[step=1cm,thin] (0,0) grid (6,3);
\drawLoutline{0}{1}
\drawLoutline{2}{1}
\drawLoutline{4}{0}
\end{scope}

\begin{scope}[shift={(21,-5)}]
\draw[step=1cm,thin] (0,0) grid (6,3);
\drawLoutline{0}{1}
\drawLoutline{2}{1}
\drawLoutline{4}{1}
\end{scope}

\begin{scope} [shift={(10.5,-10)}]
\draw[step=1cm, thin] (0,0) grid (6,3);
\drawLoutline{0}{0}
\drawLoutline{1}{1}
\drawLoutline{3}{0}
\drawLoutline{4}{1}
\end{scope}
\end{tikzpicture}
\caption{Dense tilings.}\label{fig:densetilings}
\end{figure}
We note that the number of dense tilings counted by $Q_{n}(1)$ is equal to the sequence \oeis{A008346}, which is the 
Pell--Padovan's sequence, see \cite{Bilgici2013}.

The recursion in \eqref{eq:denserecursion} can be expressed as 
\begin{equation}
    \begin{bmatrix}
        Q_{n-2}\\
        Q_{n-1}\\
        Q_{n}\\
    \end{bmatrix} =
    \begin{bmatrix}
        1 & 0 & 0\\
        0 & 1 & 0\\
        s & 2s & 0\\
    \end{bmatrix}
    \begin{bmatrix}
        Q_{n-3}\\
        Q_{n-2}\\
        Q_{n-1}\\
    \end{bmatrix}.
\end{equation}
Applying Theorem~\ref{thm:ftof} as in the proof of Theorem~\ref{thm:interlacing}, we can see that this is a sequence of real-rooted polynomials.
\end{example}

\begin{remark}
One can further refine $\gB_{\mu,d}(x,t)$ by introducing a third variable $u$
that tracks the number of empty columns (columns with anchor symbol $\infty$),
obtaining a trivariate generating function $\gB_{\mu,d}(x,t,u)$ for fault-free words.
Substituting $u=0$ then gives the generating function for fault-free words with no empty columns.
We leave the systematic study of this refinement for future work.
\end{remark}

\section{A row-refined tiling polynomial} \label{sec:rowRefined}

Another natural variation of the tiling polynomial is obtained by distinguishing tiles
by the row of their anchor square.
For anchor words in $\anchor_\mu(d,n)$, define
\[
  \defin{S_{\mu,d,n}(s_1,\dotsc,s_d)} \coloneqq \sum_{w\in\anchor_\mu(d,n)} \prod_{\substack{1\leq i\leq n \\ a_i\neq\infty}} s_{a_i},
\]
so that $s_r$ marks tiles with anchor in row $r$.
Note that $S_{\mu,d,n}(1,\dotsc,1)=P_{\mu,d,n}(1)$ (the total number of tilings).

\begin{theorem}\label{thm:row1RealRooted}
For $\mu=(k,k)$ and $d\leq k$, the generating function for the row-refined polynomials is
\begin{equation}\label{eq:row1GF}
  \sum_{n\geq 0} S_{(k,k),d,n}(s_1,\dotsc,s_d)\,x^n = \frac{1}{1 - x - (s_1+s_2+\cdots+s_d)\,x^2}.
\end{equation}
Equivalently,
\begin{equation}\label{eq:row1closed}
  S_{(k,k),d,n}(s_1,\dotsc,s_d) = \sum_{j=0}^{\lfloor n/2\rfloor} \binom{n-j}{j}(s_1+s_2+\cdots+s_d)^j.
\end{equation}
In particular, setting $s_1=s$ and the remaining variables to 1, the univariate polynomial
$S_{(k,k),d,n}(s,1,\dotsc,1)= \sum_{j=0}^{\lfloor n/2\rfloor} \binom{n-j}{j}(s+d-1)^j$
is real-rooted for all $n\geq 0$.
\end{theorem}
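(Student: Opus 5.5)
The plan is to redo the fault-free decomposition from Theorem~\ref{thm:twoColumnRealRooted}, now keeping track of the row of each anchor. For $\mu=(k,k)$ and $d\leq k$, two anchors in consecutive columns always conflict, because the row intervals overlap automatically. This is exactly the observation in Lemma~\ref{lem:clawFree}. Any two anchors at column distance at least $2$ never conflict, so the anchor word condition \eqref{anchorwordinequalites} just says that no two non-$\infty$ letters are adjacent and that $a_n=\infty$.

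The fault-free words are therefore the single letter $\infty$, with weight $x$, and the words $r\,\infty$ for $r\in\{1,\dotsc,d\}$, each with weight $s_r x^2$. So the row-refined fault-free series is $x+(s_1+\cdots+s_d)x^2$. Since the anchor conditions are local, every anchor word factors uniquely as a concatenation of fault-free words. The argument of Theorem~\ref{thm:genfuncformula}, with $t^{\bt(w)}$ replaced by the multiplicative row weight, then gives \eqref{eq:row1GF}.

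For \eqref{eq:row1closed}, set $\sigma=s_1+\cdots+s_d$ and expand the generating function as $\sum_m (x+\sigma x^2)^m$. Extracting $[x^n]$ means choosing $j$ factors $\sigma x^2$ among $m=n-j$ factors, which gives $\binom{n-j}{j}\sigma^j$. Equivalently, one can count compositions of $n$ into parts $1$ and $2$ with $j$ parts equal to $2$.

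For real-rootedness, specialize to $\sigma=s+d-1$. Then $S_{(k,k),d,n}(s,1,\dotsc,1)=f_n(s+d-1)$, where $f_n(u)=\sum_j\binom{n-j}{j}u^j$. By \eqref{eq:row1closed} and Theorem~\ref{thm:twoColumnRealRooted} with $d=1$, this $f_n$ is exactly $P_{(k,k),1,n}(u)$, so it is real-rooted. Alternatively, real-rootedness of $f_n$ follows from Theorem~\ref{thm:brandenleite2} with $Q(x)=1-x$ and $r=2$. A translate $u=s+d-1$ of a real-rooted polynomial is still real-rooted, which finishes the argument.

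The only step needing care is the factorization claim: that for $d\le k$ the anchor words are precisely words with no two adjacent integers, so that the row weights multiply over fault-free blocks. Everything else is bookkeeping.
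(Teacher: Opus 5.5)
Your proof is correct, but it organizes the argument differently from the paper. Both start from the same key observation: for $d\le k$ every integer letter must be followed by $\infty$.

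\textbf{How the paper proceeds.} It counts directly. A word with $j$ tiles is a choice of $j$ pairwise non-adjacent positions in $\{1,\dotsc,n-1\}$, which gives \eqref{eq:row1closed} first, and \eqref{eq:row1GF} is read off afterwards. For real-rootedness it identifies the sum with the matching generating polynomial of a path evaluated at $u=s+d-1$. It then cites the classical fact that path matching polynomials are real-rooted.

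\textbf{How you proceed.} You get \eqref{eq:row1GF} from the fault-free block decomposition of Theorem~\ref{thm:genfuncformula}, with blocks $\infty$ and $r\,\infty$. You extract coefficients to obtain \eqref{eq:row1closed}. You then reduce real-rootedness to the paper's own Theorem~\ref{thm:twoColumnRealRooted} at $d=1$, or to Theorem~\ref{thm:brandenleite2} with $Q(x)=1-x$ and $r=2$.

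\textbf{Comparison.} The paper's route is more elementary and links the polynomial to a classical object. Yours stays inside the paper's own framework, needs no outside input, and gives slightly more:
\begin{itemize}
\item Theorem~\ref{thm:twoColumnRealRooted} already gives $f_{n-1}\interl f_n$, and this interlacing survives the translation $u=s+d-1$.
\item Theorem~\ref{thm:brandenleite2} shows that every zero of the specialization satisfies $s<1-d$.
\item Your reduction makes explicit that $S_{(k,k),d,n}(s,1,\dotsc,1)=P_{(k,k),d,n}((s+d-1)/d)$. So the row-$1$ specialization is just an affine reparametrization of the ordinary two-column tiling polynomial.
\end{itemize}
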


\begin{proof}
For $d\leq k$, the anchor word condition forces any two consecutive non-$\infty$ entries
to differ by at least $k\geq d$, which is impossible.
Thus every non-$\infty$ entry must be followed immediately by $\infty$:
the big tiles are isolated in the anchor word.

An anchor word of length $n$ ending in $\infty$ with $j$ isolated big tiles is constructed
by choosing $j$ non-adjacent positions in $\{1,\ldots,n-1\}$ for the tiles (the remaining position $n$ is forced to $\infty$);
the number of such choices is $\binom{n-j}{j}$.
Each tile with anchor in row $r$ contributes weight $s_r$.
Since the tiles are independent, each contributes a factor of $(s_1+s_2+\cdots+s_d)$.
Summing over $j$ gives \eqref{eq:row1closed}, from which \eqref{eq:row1GF} follows.

For the univariate specialization $s_1=s$, $s_2=\cdots=s_d=1$,
the polynomial equals the matching polynomial of the path $P_{n-1}$
evaluated at $u=s+d-1$.  Matching polynomials of paths are real-rooted
(see e.g.~\cite{branden2015unimodality}), and the substitution $s\mapsto u-d+1$
preserves real-rootedness.
\end{proof}

\begin{example}
Setting $s=1$ recovers the total tiling count:
\begin{equation}
  S_{(k,k),d,n}(1) = \sum_{j=0}^{\lfloor n/2\rfloor}\binom{n-j}{j}d^j = [x^n]\,\frac{1}{1-x-dx^2},
\end{equation}
giving the sequences \oeis{A001045} ($d=2$, Jacobsthal), \oeis{A006130} ($d=3$),
and \oeis{A006131} ($d=4$).
The coefficient of $s^1$ in $S_{(k,k),d,n}(s)$ (the total weight of row-$1$ anchors across all tilings)
equals $[x^{n-2}]\bigl(1/(1-x-(d-1)x^2)\bigr)^2$, the self-convolution of the $d{-}1$ sequence;
for $d=2$ this is \oeis{A001629} (self-convolution of Fibonacci) and for $d=3$ it is \oeis{A073371}
(self-convolution of the Jacobsthal sequence).
\end{example}

\begin{theorem}\label{thm:row1Hook}
Let $\mu = (h,1,1,\dotsc,1)$ be a hook of width $\ell+1$.
Let $e_k(\mathbf{s}) = e_k(s_1,\dotsc,s_d)$ denote the $k$th elementary symmetric polynomial.
The row-refined fault-free generating function is
\begin{equation}\label{eq:rowHookB}
\gB_{\mu,d}(x,\mathbf{s})
= x + \sum_{k=1}^{d} e_k(\mathbf{s})\cdot x^{\ell+1}(x+x^2+\dotsb+x^\ell)^{k-1},
\end{equation}
and the row-refined tiling generating function is
$\sum_{n\geq 0} S_{\mu,d,n}(\mathbf{s})\,x^n = 1/(1-\gB_{\mu,d}(x,\mathbf{s}))$.
\end{theorem}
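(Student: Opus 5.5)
The plan is to refine the proof of Theorem~\ref{thm:genFunc} so that it records which anchor rows are used, not only how many tiles there are. The first step is to define the row-refined fault-free generating function $\gB_{\mu,d}(x,\mathbf{s})$ as the sum over $w\in\faultfree_\mu(d)$ of $x^{\btw(w)}\prod_{a_i\neq\infty}s_{a_i}$. The product formula $\sum_n S_{\mu,d,n}(\mathbf{s})x^n = 1/(1-\gB_{\mu,d}(x,\mathbf{s}))$ then follows exactly as in Theorem~\ref{thm:genfuncformula}. Every anchor word factors uniquely as a concatenation of fault-free words, and both the width and the monomial $\prod s_{a_i}$ are multiplicative under concatenation, since each non-$\infty$ letter lies in exactly one fault-free factor. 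Summing over sequences of fault-free words gives the geometric series $\sum_{m\geq 0}\gB^m$.

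The second step is to compute $\gB_{\mu,d}(x,\mathbf{s})$ for the hook $\mu=(h,1,\dotsc,1)$. The word $(\infty)$ contributes $x$. Now take a fault-free word with $k\geq 1$ big tiles. The anchor rows must be strictly increasing from left to right: for a hook, each condition in \eqref{anchorwordinequalites} reads $a_{i+j}\geq a_i+1$. The word is then determined by two pieces of data:
\begin{itemize}
\item the set $\{r_1<\dots<r_k\}\subseteq\{1,\dots,d\}$ of anchor rows;
\item the $k-1$ horizontal offsets between consecutive anchors, each in $\{1,\dots,\ell\}$.
\end{itemize}
Offsets of at most $\ell$ are exactly what prevents a fault line between consecutive tiles. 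The offsets and the rows are chosen independently, because the hook condition only asks for increasing rows within horizontal distance $\ell$. This is precisely the bijection already used for \eqref{eq:BmudHookSum}.

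Under this bijection, the row data contributes the monomial $s_{r_1}\cdots s_{r_k}$ in place of $t^k$. The offsets together with the width $\ell+1$ of the first tile contribute $x^{\ell+1}(x+\dots+x^\ell)^{k-1}$. Summing the monomial over all $k$-subsets of rows gives $e_k(\mathbf{s})$, which yields \eqref{eq:rowHookB}. As a sanity check, setting all $s_r=t$ gives $e_k=\binom{d}{k}t^k$ and recovers Theorem~\ref{thm:genFunc}.

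The main obstacle is justifying the independence of rows and offsets carefully. One must verify three things:
\begin{itemize}
\item Strictly increasing rows among anchors spaced by at most $\ell$ are not only necessary but also sufficient for the word to lie in $\anchor_\mu(d,n)$. This also requires checking non-consecutive anchors within distance $\ell$, which holds by transitivity of increasing rows.
\item The trailing $\ell$ copies of $\infty$ are accounted for in the width $\ell+1$ of the final tile.
\item No other fault-free configurations arise, such as non-monotone rows when the gap exceeds $\ell$. These cannot occur, because a gap larger than $\ell$ would create a fault line.
\end{itemize}
I expect these checks to be routine once phrased via the inequalities \eqref{anchorwordinequalites}.
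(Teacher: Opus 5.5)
Your proposal is correct and follows essentially the same route as the paper. The paper likewise notes that the hook condition forces strictly increasing anchor rows, which gives $e_k(\mathbf{s})$, and that each of the $k-1$ gaps has an independent offset in $\{1,\dots,\ell\}$, which together with the factor $x^{\ell+1}$ gives $(x+\dots+x^\ell)^{k-1}$; it then invokes Theorem~\ref{thm:genfuncformula} to get $1/(1-\gB_{\mu,d}(x,\mathbf{s}))$, and your extra checks only make explicit what its short proof leaves implicit.
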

\begin{proof}
Since $\mu_1=\dotsb=\mu_\ell=1$, the anchor word constraint at every offset
$k=1,\dotsc,\ell$ is $a_{i+k}\geq a_i+1$.
In a fault-free word with $m$ tiles, the anchor rows therefore form
a strictly increasing sequence $r_1 < r_2 < \cdots < r_m$ in $\{1,\dotsc,d\}$,
contributing the weight $s_{r_1}\cdots s_{r_m} = e_m$ summed over all such choices.
The horizontal offset between consecutive tiles can be any value in $\{1,\dotsc,\ell\}$,
giving $\ell$ choices per gap and $(x+x^2+\dotsb+x^\ell)^{m-1}$ for the $m-1$ gaps.
The first tile occupies $\ell+1$ columns, yielding the formula.
The generating function follows from Theorem~\ref{thm:genfuncformula}.
\end{proof}

When the horizontal offset is forced to be unique, the formula simplifies to a product.

\begin{theorem}\label{thm:row1Lpoly}
Let $\mu = (\mu_0,\mu_1,\dotsc,\mu_\ell)$ with $\ell \geq 1$, $\mu_\ell = 1$, and $\mu_{\ell-1}\geq d$.
The row-refined generating function is
\begin{equation}\label{eq:row1LpolyGF}
  \sum_{n\geq 0} S_{\mu,d,n}(s_1,\dotsc,s_d)\,x^n = \frac{1}{1 - x\prod_{r=1}^{d}(1+s_r x^\ell)}.
\end{equation}
In particular, setting $s_1=s$ and $s_2=\cdots=s_d=1$, we obtain
$\frac{1}{1 - x(1+x^\ell)^{d-1}(1+sx^\ell)}$.
\end{theorem}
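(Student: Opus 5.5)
The plan is to compute the row-refined fault-free generating function $\gB_{\mu,d}(x,\mathbf{s})$ directly, following the argument of Proposition~\ref{prop:BmudRigid} but keeping track of anchor rows. Then we apply the concatenation principle of Theorem~\ref{thm:genfuncformula}. That principle carries over verbatim to the row-refined setting, because the weight $\prod s_{a_i}$ is multiplicative under splitting an anchor word into fault-free blocks. Hence
\[
\sum_{n\geq 0} S_{\mu,d,n}(\mathbf{s})\,x^n=\frac{1}{1-\gB_{\mu,d}(x,\mathbf{s})},
\]
and it suffices to prove $\gB_{\mu,d}(x,\mathbf{s})=x\prod_{r=1}^d(1+s_rx^\ell)$.

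For the fault-free words, the hypotheses $\mu_\ell=1$ and $\mu_{\ell-1}\geq d$ say exactly that $\mu$ is rigid. I would first redo the offset analysis from Proposition~\ref{prop:BmudRigid}, now carefully in terms of the inequalities \eqref{anchorwordinequalites}.

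Suppose $a_i\neq\infty$ and the next non-$\infty$ entry is $a_{i+k}$. If $1\leq k\leq \ell-1$, then $a_{i+k}\geq a_i+\mu_k\geq 1+\mu_{\ell-1}\geq d+1$, which is impossible. If $k=\ell$, the constraint is $a_{i+\ell}\geq a_i+1$. If $k>\ell$, there is no constraint, but then there are at least $\ell$ consecutive $\infty$'s after $a_i$, which creates a fault line.

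So in a fault-free word with $m\geq 1$ tiles, the anchors sit in columns $1,\ell+1,\dots,(m-1)\ell+1$ with rows $r_1<\dots<r_m$, followed by $\ell$ trailing $\infty$'s. The word has length $m\ell+1$ and weight $s_{r_1}\cdots s_{r_m}$. Conversely, every such choice is a valid fault-free word. Summing over $m$ and over the subsets $\{r_1<\dots<r_m\}$ gives
\[
x+\sum_{m\geq1}e_m(\mathbf{s})\,x^{m\ell+1}=x\prod_{r=1}^d(1+s_rx^\ell).
\]
The term $m=0$ is the single word $(\infty)$, which contributes $x$.

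The main point needing care is checking that the fault-free decomposition works for the refined weight. In particular, one must confirm that "fault-free" is characterized exactly by the absence of $\ell$ consecutive $\infty$'s before the end. This is the definition given in Section~\ref{sec:faultfree}, so the decomposition is fine. The specialization then follows by substituting $s_1=s$ and $s_r=1$ for $r\geq2$, which gives the factor $(1+x^\ell)^{d-1}(1+sx^\ell)$.
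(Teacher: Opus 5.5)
Your proof is correct and follows the paper's route: decompose into fault-free blocks, use rigidity to force consecutive anchors exactly $\ell$ columns apart with strictly increasing rows, and sum $e_m(\mathbf{s})\,x^{m\ell+1}$ to obtain $x\prod_{r=1}^d(1+s_rx^\ell)$. The only difference is that you derive the forced offset explicitly from the inequalities \eqref{anchorwordinequalites}, while the paper cites Proposition~\ref{prop:BmudRigid} and adapts the formula of Theorem~\ref{thm:row1Hook}; your version is more self-contained.
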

\begin{proof}
By Proposition~\ref{prop:BmudRigid}, the condition $\mu_{\ell-1}\geq d$
	forces the only admissible horizontal offset to be exactly~$\ell$.
The row-refined fault-free generating function from Theorem~\ref{thm:row1Hook}
therefore has $(x+\dotsb+x^\ell)^{k-1}$ replaced by $(x^\ell)^{k-1}$, giving
\[
\gB_{\mu,d}(x,\mathbf{s})
= x + \sum_{k=1}^{d} e_k(\mathbf{s})\,x^{k\ell+1}
= x\prod_{r=1}^{d}(1+s_r x^\ell).
\qedhere
\]
\end{proof}

\begin{example}
For $\mu=(h,1)$ (so $\ell=1$) and $d=2$,
the generating function is $1/(1-x(1+x)(1+sx))$,
and $S_n(s) \coloneqq S_{\mu,d,n}(s,1)$ satisfies
$S_n = S_{n-1} + (s+1)S_{n-2} + sS_{n-3}$.
Setting $s=1$ recovers the sequence \oeis{A002478};
the coefficient of $s^1$ gives \oeis{A023610}.
\end{example}

\subsection{Real-rootedness across polynomial variants}\label{sec:variants}

Table~\ref{tab:variants} compares real-rootedness of four polynomial families
associated with the tiling:
the standard tiling polynomial $P_n(t)$;
the row-$1$ specialization $S_n(s)\coloneqq S_{\mu,d,n}(s,1,\dotsc,1)$,
which tracks the number of tiles anchored in row~$1$;
the row-$d$ specialization $T_n(s)\coloneqq S_{\mu,d,n}(1,\dotsc,1,s)$;
and the dense tiling polynomial
$Q_n(s)=[x^n]\,\frac{1}{1-s(\gB_{\mu,d}(x,1)-x)}$.

\begin{table}[!ht]
\centering
\caption{Real-rootedness of polynomial variants
(verified for $n\leq 200$).
A check mark~\checkmark\ means real-rooted for all tested~$n$;
a cross~$\times_k$ means first failure at $n=k$.}
\label{tab:variants}
\renewcommand{\arraystretch}{1.25}
\begin{tabular}{@{\hspace{1em}}l@{\hspace{2em}}c@{\hspace{2em}}c@{\hspace{2em}}c@{\hspace{2em}}c@{\hspace{2em}}c@{\hspace{1em}}}
\toprule
$\mu$ & $d$ & \shortstack{$P_n(t)$\\[-2pt]\scriptsize tiling} & \shortstack{$S_n(s)$\\[-2pt]\scriptsize row-1} & \shortstack{$T_n(s)$\\[-2pt]\scriptsize row-$d$} & \shortstack{$Q_n(s)$\\[-2pt]\scriptsize dense} \\
\midrule
\multicolumn{6}{@{\hspace{1em}}l}{\emph{Two-column tiles}} \\
$(k,k)$ & $\leq k$ &  \checkmark & \checkmark & \checkmark & \checkmark \\
$(2,2)$ & $3$ & \checkmark & \checkmark & \checkmark & \checkmark \\
$(2,2)$ & $4$ & \checkmark & $\times_{22}$ & $\times_{22}$ & \checkmark \\
$(3,3)$ & $4$ & \checkmark & $\times_{30}$ & $\times_{30}$ & \checkmark \\
$(h,1)$ & any & \checkmark & \checkmark & \checkmark & \checkmark \\
\midrule
\multicolumn{6}{@{\hspace{1em}}l}{\emph{Three-column tiles}} \\
$(3,1,1)$ & $3$ & $\times_{19}$ & \checkmark & \checkmark & \checkmark \\
$(h,1,1)$ & $3$ & $\times_{19}$ & \checkmark & \checkmark & \checkmark \\
$(2,2,2)$ & $3$ & $\times_{56}$ & \checkmark & \checkmark & \checkmark \\
$(h,2,2)$ & $3$ & $\times_{56}$ & \checkmark & \checkmark & \checkmark \\
$(3,2,1)$ & $3$ & \checkmark & \checkmark & \checkmark & $\times_{17}$ \\
$(3,3,2)$ & $3$ & \checkmark & \checkmark & \checkmark & $\times_{15}$ \\
$(3,3,3)$ & $4$ & $\times_{24}$ & $\times_{180}$ & $\times_{180}$ & \checkmark \\
$(4,4,4)$ & $5$ & $\times_{16}$ & $\times_{45}$ & $\times_{45}$ & \checkmark \\
\midrule
\multicolumn{6}{@{\hspace{1em}}l}{\emph{Four-column tiles}} \\
$(3,1,1,1)$ & $3$ & $\times_{18}$ & \checkmark & \checkmark & \checkmark \\
$(2,2,2,2)$ & $3$ & $\times_{36}$ & \checkmark & \checkmark & \checkmark \\
$(h,2,2,1)$ & $4$ & \checkmark & \checkmark & \checkmark & \checkmark \\
$(4,3,2,1)$ & $4$ & $\times_{41}$ & \checkmark & \checkmark & $\times_{84}$ \\
\bottomrule
\end{tabular}
\renewcommand{\arraystretch}{1.0}
\end{table}

Several of the integer sequences $P_n(1)$, $Q_n(1)$, and $S_n(0)$
(tilings avoiding row~$1$, equivalently $P_{\mu,d-1,n}(1)$)
appear in the OEIS or are new;
see Table~\ref{tab:oeis}.

\begin{table}[ht!]
\centering
\caption{OEIS connections for tiling counts.
Sequences marked ``New'' do not appear in the OEIS
at the time of writing.}
\label{tab:oeis}
\renewcommand{\arraystretch}{1.3}
\begin{tabular}{@{\hspace{1em}}l@{\hspace{2em}}l@{\hspace{2em}}c@{\hspace{2em}}l@{\hspace{1em}}}
\toprule
Sequence & $\mu$, $d$ & GF & OEIS \\
\midrule
\multicolumn{4}{@{\hspace{1em}}l}{\emph{Total tilings $P_n(1)$}} \\
$P_n(1)$ & $(h,1),\; d{=}2$ & $\frac{1}{1-x(1+x)^2}$ & \oeis{A002478} \\
$P_n(1)$ & $(h,1),\; d{=}3$ & $\frac{1}{1-x(1+x)^3}$ & \oeis{A099234} \\
$P_n(1)$ & $(2,2),\; d{=}3$ & $\frac{1}{1-x-3x^2-x^3}$ & \oeis{A097076} \\
$P_n(1)$ & $(3,2,1),\; d{=}2$ & $\frac{1}{1-x-2x^3-x^5}$ & \oeis{A193147} \\
$P_n(1)$ & $(3,2,1),\; d{=}3$ & --- & New \\
$P_n(1)$ & $(h,2,2,1),\; d{=}4$ & --- & New \\
$P_n(1)$ & $(3,3,2),\; d{=}3$ & --- & New \\
\midrule
\multicolumn{4}{@{\hspace{1em}}l}{\emph{Dense tilings $Q_n(1)$}} \\
$Q_n(1)$ & $(h,1),\; d{=}2$ & $\frac{1}{1-2x^2-x^3}$ & \oeis{A008346} \\
$Q_n(1)$ & $(3,1,1),\; d{=}3$ & --- & New \\
$Q_n(1)$ & $(h,2,2,1),\; d{=}4$ & --- & New \\
\bottomrule
\end{tabular}
\renewcommand{\arraystretch}{1.0}
\end{table}

\section{Tilings of a cylinder}\label{sec:cylindric}

We can also consider tilings of a cylinder, i.e., we identify the left and right boundaries of the board.
The notion of an anchor word can be adapted to this setting in the natural way, by considering the board as a cylinder.
Let \defin{$\anchorc_{\mu}(d,n)$} denote the cylindric anchor words of length $n$ determined by $\mu$ and $d$.

\begin{lemma}
Every cylindric anchor word has a fault line.
\end{lemma}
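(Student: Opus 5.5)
We argue by a counting (pigeonhole) argument on a cylinder of circumference $n$. A vertical fault line sits between columns $i$ and $i+1$ (indices mod $n$) exactly when no big tile covers both columns. Every big tile has width $\ell+1$, so it straddles exactly $\ell$ of the $n$ vertical boundaries, namely the boundaries between consecutive columns in the cyclic interval starting at its anchor column. The plan is to show that the tiles cannot cover all $n$ boundaries.

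First, as in Proposition~\ref{anchortotiling}, the assumption $d\leq\mu_0$ gives at most one anchor square per column. Next we use the anchor word inequalities \eqref{anchorwordinequalites}, read cyclically. If $a_i\neq\infty$, then $a_{i+k}\geq a_i+\mu_k\geq a_i+1$ for $k=1,\dots,\ell$, whenever $a_{i+k}\neq\infty$.

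Suppose no fault line exists. Then every boundary is straddled by some tile. The boundary immediately to the left of an anchor column $i$ (between $i-1$ and $i$) must then be straddled by a tile anchored at some column $j$ with $i-\ell\leq j\leq i-1$ cyclically. Its anchor row satisfies $a_i\geq a_j+\mu_{i-j}\geq a_j+1$. Starting from any anchor and repeatedly choosing such a predecessor $j$ gives an infinite backward chain of anchors with strictly decreasing rows. All rows lie in $\{1,\dots,d\}$, a finite set, which is a contradiction. (If there are no anchors at all, every boundary is trivially a fault line, since $n\geq 1$.)

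The main obstacle is the cyclic bookkeeping. We must make sure the inequality $a_{i+k}\geq a_i+\mu_k$ is the correct cylindric analogue; this is what \enquote{the natural way} of adapting anchor words must mean, since overlapping tiles across the seam are forbidden exactly as in the interior. We must also handle small $n\leq\ell$, where a tile wraps onto itself and the tiling is degenerate or impossible.

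Alternatively, we can avoid chains: choose an anchor $i$ whose row $a_i$ is minimal. Any tile straddling the boundary left of column $i$ would have an anchor row strictly smaller than $a_i$, so that boundary is a fault line. This one-line version is the argument I would write up.
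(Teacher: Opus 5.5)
Your minimal-row argument is correct and is essentially the paper's proof: the paper notes that anchor rows strictly increase within a fault-free stretch (as in Lemma~\ref{recurrence-length}), so a cylinder with no fault line would need a cyclically strictly increasing sequence of rows. This is the same contradiction you obtain at the boundary just left of a lowest anchor. Your version is slightly more explicit than the paper's one-line proof, since it locates a specific fault line and checks the cyclic inequality $a_i\geq a_j+\mu_{i-j}$ directly; the opening pigeonhole framing is never actually used and can be dropped.
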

\begin{proof}
Using the same observation as in the proof of Lemma~\ref{recurrence-length}, the integers 
in the anchor words between fault lines appear in increasing order.  
\end{proof}
 
From the previous lemma it follows that every cylindric anchor word is a cyclic shift of 
a regular anchor word. This property allows us to make the following observations.
\bigskip

Let $w \in \anchor_\mu (d,n)$ be an anchor word where $w$ is expressed as the concatenation
\begin{equation}
w=\beta_1,\beta_2, \dots, \beta_m, \text{where each $\beta_i$ is in $\faultfree_\mu(d)$}.
\end{equation}

Let $k$ be the width of $\beta_1$. Cyclical shifting of $w$ by $0,1,\dots, k-1$ steps
to the left, gives $k$ different elements in $\anchorc_\mu (d,n)$.
Every word in $\anchorc_\mu (d,n)$ can therefore be viewed (uniquely) as a cylindrically shifted $\beta_1 \in \faultfree_\mu(d)$, followed by a concatenation of additional words in $\faultfree_\mu(d)$.

The generating polynomial for all the shifted words in $\faultfree_\mu(d)$ is therefore, $x \cdot \frac{\partial}{\partial x} \gB_{\mu,d}(x,t)$.
We can now state the cylindric analog of the generating function in \eqref{genfuncformula}.
\begin{theorem}
Let \defin{$\gC_{\mu,d}(x,t)$} be defined as 
\begin{equation}
\gC_{\mu,d}(x,t)\coloneqq \sum_{n\geq0}\sum_{w\in \anchorc_{\mu}(d,n)} t^{\bt(w)}x^n. 
\end{equation}
Then we have
\begin{equation}
\gC_{\mu,d}(x,t) = 1+ \frac{x \cdot \frac{\partial}{\partial x} \gB_{\mu,d}(x,t)}{1-\gB_{\mu,d}(x,t)}. 
\end{equation}
\end{theorem}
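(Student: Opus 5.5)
The plan is a bijective decomposition of cylindric anchor words, followed by the standard generating-function translation of sequences used in Theorem~\ref{thm:genfuncformula}. The term $1$ accounts for the empty word ($n=0$). For $n\geq 1$, I will set up a bijection between $\anchorc_\mu(d,n)$ and the set of pairs $(\beta_1^{(j)}, (\beta_2,\dots,\beta_m))$. Here $\beta_1\in\faultfree_\mu(d)$ has width $k$, $j\in\{0,1,\dots,k-1\}$ is a shift amount, and $\beta_2,\dots,\beta_m$ is an arbitrary (possibly empty) sequence of fault-free words, with total width $n$. Weighting by $x^{\text{width}}t^{\bt}$, the pairs $(\beta_1,j)$ contribute $\sum_{\beta} k\,x^{k}t^{\bt(\beta)} = x\,\partial_x \gB_{\mu,d}(x,t)$, and the tail contributes $1/(1-\gB_{\mu,d}(x,t))$. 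Their product is the claimed formula.

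To build the bijection, I first use the preceding lemma: every cylindric anchor word has at least one fault line. Cutting at a fault line linearizes the cylinder into an ordinary anchor word in $\anchor_\mu(d,n)$, and this word factors uniquely into fault-free words. To make the choice of cut canonical, I will use the column labelled $1$ of the cylinder. Column $1$ lies in a unique fault-free block of the cyclic decomposition, which is well defined because the fault lines on a cylinder cut it into cyclically ordered fault-free blocks. Call this block $\beta_1$, and let $j$ be the position of column $1$ within $\beta_1$ (the offset from $\beta_1$'s left fault line). Reading clockwise from the fault line immediately after $\beta_1$ gives the remaining blocks $\beta_2,\dots,\beta_m$.

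Conversely, given $(\beta_1,j,\beta_2,\dots,\beta_m)$, I concatenate to get $w\in\anchor_\mu(d,n)$ and rotate left by $j$. These two maps are mutually inverse. I must also check that $\bt$ is preserved, which is clear since rotation does not change the number of integer entries.

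The main obstacle is verifying that the fault lines of the cylindric tiling are exactly the cyclic images of the fault lines of the linear word. This has two parts. First, rotating a concatenation of fault-free words across the seam must not create tile overlaps. Second, it must not destroy or create fault lines. Both hold because the seam between $\beta_m$ and $\beta_1$ is itself a fault line, so no tile crosses it. The condition that the last $\ell$ entries are $\infty$ guarantees that tiles never wrap around.

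One subtlety is the single-column word $(\infty)$, whose width is $1$ and which therefore contributes $x$ with shift $j=0$ only. Another is the degenerate case $m=1$, where $\beta_1$ fills the whole cylinder. Here the single fault line used to linearize is the unique one, and the argument is unchanged. If needed, I will handle these edge cases separately.
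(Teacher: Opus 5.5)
Your proposal is correct and follows essentially the same route as the paper. You write each cylindric word as a cyclically shifted fault-free block $\beta_1$ (the block containing column $1$; its $k$ possible shifts contribute $x\,\partial_x \gB_{\mu,d}(x,t)$) followed by an arbitrary sequence of fault-free words (contributing $1/(1-\gB_{\mu,d}(x,t))$), using the lemma that every cylindric word has a fault line. Your canonical choice via column $1$ and your explicit check that the two maps are mutually inverse make precise the uniqueness claim that the paper only asserts.
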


\begin{corollary}
The generating function for $\mu = (h,1)$ and $d=h$ for cylindric words is
\begin{equation}
\gC_{\mu,d}(x,t) = 1+ \frac{x \cdot (dtx(1+tx)^{d-1}+(1+tx)^d)}{1-x(1+xt)^d}.
\end{equation}
\end{corollary}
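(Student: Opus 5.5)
The plan is to specialize the cylindric generating function from the preceding theorem,
\[
\gC_{\mu,d}(x,t) = 1+ \frac{x \cdot \frac{\partial}{\partial x} \gB_{\mu,d}(x,t)}{1-\gB_{\mu,d}(x,t)},
\]
to the hook $\mu=(h,1)$ with $d=h$. Everything then reduces to knowing $\gB_{\mu,d}$ explicitly, plus one differentiation.

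First, I would record the fault-free generating function. The special case $\ell=1$ of Theorem~\ref{thm:genFunc} gives
\[
\gB_{(h,1),d}(x,t)=x(1+xt)^d.
\]
The assumption $d=h$ guarantees $d\leq \mu_0$, so the concatenation decomposition into fault-free words is valid. This is the same hypothesis under which the cylindric theorem was derived.

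Second, I would differentiate with respect to $x$ using the product rule:
\[
\frac{\partial}{\partial x}\bigl(x(1+tx)^d\bigr) = (1+tx)^d + dtx(1+tx)^{d-1}.
\]
Multiplying by $x$ gives exactly the numerator $x\bigl(dtx(1+tx)^{d-1}+(1+tx)^d\bigr)$. The denominator is $1-x(1+xt)^d$. Substituting both into the cylindric formula yields the stated identity.

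The only point needing care is not the algebra but the applicability of the cylindric theorem. That theorem relies on two facts: every cylindric word has a fault line, and each cylindric word corresponds uniquely to a cyclic shift of its first fault-free block, with $k$ shifts available for a block of width $k$. Both hold here because $d\le\mu_0$, so no column carries two anchors. I would briefly note this and then conclude. A sanity check at small $n$, for instance $n=1$ where the coefficient is $1+dt$, would confirm that the conventions match.
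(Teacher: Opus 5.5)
Your proposal is correct and follows the paper's route exactly: specialize the cylindric formula using $\gB_{(h,1),d}(x,t)=x(1+xt)^d$ (the $\ell=1$ case of Theorem~\ref{thm:genFunc}), then differentiate by the product rule. One slip in your sanity check: the coefficient of $x^1$ is $1$, not $1+dt$, since a width-$2$ tile cannot fit on a cylinder of circumference $1$ (this matches the leading $1$ in Table~\ref{tab:cylindricCounts}); the first nontrivial check is $n=2$, where the coefficient is $1+2dt$.
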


\begin{example}
Below are the sequences for $|\anchorc_\mu(d,n)|$ with $\mu=(h,1)$ and $d=h=1,\dotsc,8$.

\begin{table}[!ht]
\centering
\caption{Cylindric tiling counts $|\anchorc_{(h,1)}(h,n)|$ for small~$h=d$.}\label{tab:cylindricCounts}
\begin{tabular}{@{}cll@{}}
\toprule
$d$ & \textbf{Sequence} ($n=1,2,3,\dotsc$) & \textbf{OEIS} \\
\midrule
1 & 1, 3, 4, 7, 11, 18, 29, 47, \dots & \oeis{A000032} \\
2 & 1, 5, 10, 21, 46, 98, 211, 453, \dots & \oeis{A286910} \\
3 & 1, 7, 19, 47, 126, 331, 869, \dots & New \\
4 & 1, 9, 31, 89, 276, 855, 2626, \dots & New \\
5 & 1, 11, 46, 151, 526, 1862, 6518, \dots & New \\
6 & 1, 13, 64, 237, 911, 3604, 14113, \dots & New \\
7 & 1, 15, 85, 351, 1471, 6399, 27637, \dots & New \\
8 & 1, 17, 109, 497, 2251, 10637, 50107, \dots & New \\
\bottomrule
\end{tabular}
\end{table}
\end{example}

\begin{theorem}\label{thm:cylindricRealRooted}
The generating polynomial for $\mu = (h,1)$ and $d=h$ for cylindric words of length $n$, that is 
\begin{equation}
    \sum_{w\in \anchorc_{\mu}(d,n)} t^{\bt(w)}
\end{equation}
has only real zeros.
\end{theorem}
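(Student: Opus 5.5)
The plan is to realize the cylindric generating polynomial as the independence polynomial of a claw-free graph and then invoke the Chudnovsky--Seymour theorem~\cite{Chudnovsky2007}, as announced in the introduction.

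\emph{Step 1: the conflict graph.} Fix $n\geq 3$; the cases $n=1,2$ can be checked by hand from Table~\ref{tab:cylindricCounts}. For $n=1$ the polynomial is $1$. For $n=2$ it is $1+2dt$, which is linear and hence real-rooted. Let $\mu=(h,1)$ and $d=h$, so the cylinder has height $2h-1$ and circumference $n$. Let $G_n$ have vertex set $\mathbb{Z}_n\times\{1,\dots,d\}$. A vertex $(i,r)$ records a tile with anchor in column $i$ and row $r$. Two vertices are joined by an edge exactly when the corresponding placements are incompatible in a cylindric anchor word. By the cylindric version of Definition~\ref{def:anchorword}, this happens in two cases. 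The first is $i=j$ (two anchors in one column). The second is $j=i+1$ with $r'\leq r$, and symmetrically $i=j+1$ with $r\leq r'$. Independent sets of $G_n$ then correspond bijectively to elements of $\anchorc_\mu(d,n)$, with the number of integer letters equal to the size of the set. The cylindric polynomial is therefore $I(G_n,t)$. This is the cylindric analogue of the conflict-graph lemma. As there, one must make sure that the auxiliary constraints (one anchor per column) are included as edges.

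\emph{Step 2: claw-freeness.} Suppose $u=(i,r)$ is the centre of a claw with leaves $v_1,v_2,v_3$. Each neighbour of $u$ lies in column $i-1$, $i$ or $i+1$, and since $n\geq 3$ these columns are distinct. Two leaves in the same column are adjacent, so the three leaves must occupy the three distinct columns $i-1$, $i$, $i+1$. The leaf $v=(i,r_0)$ in column $i$ is, however, adjacent to every vertex in columns $i\pm1$ except for a one-sided range of rows. In column $i+1$ it is non-adjacent only to rows $>r_0$, and in column $i-1$ only to rows $\geq r_0$.

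Being a neighbour of $u$ constrains the leaves as follows. The leaf $(i+1,r_+)$ must satisfy $r_+\leq r$. The leaf $(i-1,r_-)$ must satisfy $r_-\geq r$. Non-adjacency with $(i,r_0)$ additionally requires $r_+>r_0$ and $r_-<r_0$. Together these give $r_0<r_+\leq r\leq r_-<r_0$, which is a contradiction. Hence $G_n$ is claw-free.

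\emph{Step 3: conclusion.} By the Chudnovsky--Seymour theorem, $I(G_n,t)$ has only real zeros.

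The main obstacle I expect is Step 1. One must carefully verify that the cylindric adjacency conditions are exactly the local ones above. In particular, for $\ell=1$ the only constraints involve consecutive columns cyclically, and the height $2h-1$ gives no further overlaps. For small $n$ the wrap-around can create double adjacencies, namely columns $i-1=i+1$ when $n=2$, and this is why those cases are handled separately.
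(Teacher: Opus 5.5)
Your proof is correct and takes the same route as the paper: the paper simply asserts that the cylindric conflict graph is claw-free and invokes Chudnovsky--Seymour, and your Step~2 supplies the verification the paper leaves as ``straightforward.'' One small slip in Step~2: $(i,r_0)$ is non-adjacent to the vertices of column $i-1$ with rows $<r_0$, not $\geq r_0$; your inequality chain uses the correct condition $r_-<r_0$, so the argument is unaffected.
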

\begin{proof}
It is straightforward to see that the tiling graph is claw-free, 
so real-rootedness follows from the Chudnovsky--Seymour theorem~\cite{Chudnovsky2007}.
\end{proof}

\bibliographystyle{alpha}
\bibliography{partial-tilings-bibliography}

\end{document}